\documentclass[12pt,oneside,reqno]{amsart}

\usepackage[T1]{fontenc}
\usepackage[utf8]{inputenc}
\usepackage{lmodern}                 
\usepackage[margin=1in]{geometry}
\usepackage{microtype}
\usepackage{graphicx}
\usepackage{xcolor}
\usepackage{booktabs}                
\usepackage{enumitem}                
\usepackage{verbatim}                
\usepackage{csquotes}                
\usepackage{tikz}

\usepackage{amsmath,amssymb,amsthm}
\usepackage{aliascnt}            
\usepackage{mathtools}               
\usepackage{mathrsfs}                
\usepackage{bm}                      

\numberwithin{equation}{section}     

\definecolor{linknavy}{RGB}{20,55,120}
\definecolor{citeolive}{RGB}{20,90,60}
\definecolor{urlplum}{RGB}{110,30,70}

\usepackage{hyperref}
\hypersetup{
  colorlinks   = true,
  linkcolor    = linknavy,           
  citecolor    = citeolive,
  urlcolor     = urlplum,
  linktoc      = all,
  breaklinks   = true,
  bookmarksnumbered = true,
  pdfstartview = FitH
}

\usepackage[
  backend   = biber,
  style     = numeric-comp,          
  sorting   = nyt,                   
  maxnames  = 99,                    
  minnames  = 99,                    
  giveninits= true,                  
  isbn      = false,
  url       = false,
  doi       = true,
  eprint    = true,
  backref   = false
]{biblatex}
\defbibheading{bibliography}{\section*{\refname}}
\newcommand*{\namedash}{\textup{--}}          

\DeclareNameFormat{familydash}{%
  \namepartfamily
  \ifnumcomp{\value{listcount}}{<}{\value{liststop}}{\namedash}{}%
}

\DeclareCiteCommand{\citeauthors}[\mkbibparens]
  {\usebibmacro{prenote}}
  {\printnames[familydash][1-99]{labelname}}
  {\multicitedelim}
  {\usebibmacro{postnote}}

\DeclareCiteCommand*{\citeauthors}
  {\usebibmacro{prenote}}
  {\printnames[familydash][1-99]{labelname}}
  {\multicitedelim}
  {\usebibmacro{postnote}}

\DeclareCiteCommand{\citeauthorsyear}[\mkbibparens]
  {\usebibmacro{prenote}}
  {\printnames[familydash][1-99]{labelname}%
   \namedash
   \iffieldundef{year}{\printfield{labelyear}}{\printfield{year}}}
  {\multicitedelim}
  {\usebibmacro{postnote}}

\DeclareCiteCommand*{\citeauthorsyear}
  {\usebibmacro{prenote}}
  {\printnames[familydash][1-99]{labelname}%
   \namedash
   \iffieldundef{year}{\printfield{labelyear}}{\printfield{year}}}
  {\multicitedelim}
  {\usebibmacro{postnote}}

\usepackage[capitalise,noabbrev,nameinlink]{cleveref}

\theoremstyle{plain}
\newtheorem{theorem}{Theorem}[section]
\newaliascnt{lemma}{theorem}
\newtheorem{lemma}[lemma]{Lemma}
\aliascntresetthe{lemma}
\newaliascnt{proposition}{theorem}
\newtheorem{proposition}[proposition]{Proposition}
\aliascntresetthe{proposition}
\newaliascnt{corollary}{theorem}
\newtheorem{corollary}[corollary]{Corollary}
\aliascntresetthe{corollary}
\newaliascnt{conjecture}{theorem}

\aliascntresetthe{conjecture}

\theoremstyle{definition}
\newaliascnt{definition}{theorem}

\aliascntresetthe{definition}
\newaliascnt{example}{theorem}

\aliascntresetthe{example}
\newaliascnt{remark}{theorem}
\newtheorem{remark}[remark]{Remark}
\aliascntresetthe{remark}
\newaliascnt{notation}{theorem}

\aliascntresetthe{notation}
\newaliascnt{convention}{theorem}

\aliascntresetthe{convention}
\newaliascnt{assumption}{theorem}

\aliascntresetthe{assumption}
\newaliascnt{claim}{theorem}

\aliascntresetthe{claim}
\newaliascnt{question}{theorem}

\aliascntresetthe{question}
\newaliascnt{problem}{theorem}

\aliascntresetthe{problem}
\newaliascnt{openproblem}{theorem}

\aliascntresetthe{openproblem}

\theoremstyle{plain}
\newtheorem*{theorem*}{Theorem}
\newtheorem*{lemma*}{Lemma}
\newtheorem*{proposition*}{Proposition}
\newtheorem*{corollary*}{Corollary}
\theoremstyle{definition}
\newtheorem*{definition*}{Definition}
\newtheorem*{example*}{Example}
\newtheorem*{remark*}{Remark}
\newtheorem*{notation*}{Notation}
\newtheorem*{convention*}{Convention}
\newtheorem*{claim*}{Claim}

\theoremstyle{plain}
\newtheorem{knowntheorem}{Theorem}
\newaliascnt{knownlemma}{knowntheorem}
\newtheorem{knownlemma}[knownlemma]{Lemma}
\aliascntresetthe{knownlemma}
\newaliascnt{knownproposition}{knowntheorem}
\newtheorem{knownproposition}[knownproposition]{Proposition}
\aliascntresetthe{knownproposition}
\newaliascnt{knowncorollary}{knowntheorem}
\newtheorem{knowncorollary}[knowncorollary]{Corollary}
\aliascntresetthe{knowncorollary}
\theoremstyle{definition}
\newaliascnt{knowndefinition}{knowntheorem}

\aliascntresetthe{knowndefinition}

\crefname{theorem}{Theorem}{Theorems}
\Crefname{theorem}{Theorem}{Theorems}
\crefname{lemma}{Lemma}{Lemmas}
\Crefname{lemma}{Lemma}{Lemmas}
\crefname{proposition}{Proposition}{Propositions}
\Crefname{proposition}{Proposition}{Propositions}
\crefname{corollary}{Corollary}{Corollaries}
\Crefname{corollary}{Corollary}{Corollaries}
\crefname{conjecture}{Conjecture}{Conjectures}
\Crefname{conjecture}{Conjecture}{Conjectures}
\crefname{definition}{Definition}{Definitions}
\Crefname{definition}{Definition}{Definitions}
\crefname{example}{Example}{Examples}
\Crefname{example}{Example}{Examples}
\crefname{remark}{Remark}{Remarks}
\Crefname{remark}{Remark}{Remarks}
\crefname{notation}{Notation}{Notations}
\Crefname{notation}{Notation}{Notations}
\crefname{convention}{Convention}{Conventions}
\Crefname{convention}{Convention}{Conventions}
\crefname{assumption}{Assumption}{Assumptions}
\Crefname{assumption}{Assumption}{Assumptions}
\crefname{claim}{Claim}{Claims}
\Crefname{claim}{Claim}{Claims}
\crefname{question}{Question}{Questions}
\Crefname{question}{Question}{Questions}
\crefname{problem}{Problem}{Problems}
\Crefname{problem}{Problem}{Problems}
\crefname{openproblem}{Open Problem}{Open Problems}
\Crefname{openproblem}{Open Problem}{Open Problems}

\crefname{knowntheorem}{Theorem}{Theorems}
\Crefname{knowntheorem}{Theorem}{Theorems}
\crefname{knownlemma}{Lemma}{Lemmas}
\Crefname{knownlemma}{Lemma}{Lemmas}
\crefname{knownproposition}{Proposition}{Propositions}
\Crefname{knownproposition}{Proposition}{Propositions}
\crefname{knowncorollary}{Corollary}{Corollaries}
\Crefname{knowncorollary}{Corollary}{Corollaries}
\crefname{knowndefinition}{Definition}{Definitions}
\Crefname{knowndefinition}{Definition}{Definitions}

\crefname{equation}{equation}{equations}
\Crefname{equation}{Equation}{Equations}
\crefname{section}{Section}{Sections}
\Crefname{section}{Section}{Sections}
\crefname{subsection}{Subsection}{Subsections}
\Crefname{subsection}{Subsection}{Subsections}
\crefname{figure}{Figure}{Figures}
\Crefname{figure}{Figure}{Figures}
\crefname{table}{Table}{Tables}
\Crefname{table}{Table}{Tables}
\crefname{cases}{Case}{Cases}
\Crefname{cases}{Case}{Cases}
\crefname{subcases}{Subcase}{Subcases}
\Crefname{subcases}{Subcase}{Subcases}
\crefname{enumi}{item}{items}
\Crefname{enumi}{Item}{Items}

\newenvironment{proofof}[1]
  {\begin{proof}[Proof of \cref{#1}]}
  {\end{proof}}

\newcounter{cases}
\newcounter{subcases}

\newcommand{\R}{\mathbb{R}}

\newcommand{\Z}{\mathbb{Z}}

\newcommand{\T}{\mathbb{T}}
\newcommand{\M}{\mathcal{M}}
\newcommand{\hM}{\widehat{\mathcal{M}}}
\newcommand{\cH}{\mathcal{H}}
\newcommand{\Sp}{\mathfrak{S}}                 
\newcommand{\wg}{\R^{n}\times\T^{m}}           

\newcommand{\eps}{\varepsilon}

\DeclarePairedDelimiter{\abs}{\lvert}{\rvert}
\DeclarePairedDelimiter{\norm}{\lVert}{\rVert}

\DeclarePairedDelimiterX{\ip}[2]{\langle}{\rangle}{#1,#2}
\DeclareMathOperator{\Tr}{Tr}

\newcommand{\Com}{\mathrm{Com}}

\allowdisplaybreaks                            
\setlist[enumerate]{itemsep=2pt,topsep=4pt}
\setlist[itemize]{itemsep=2pt,topsep=4pt}
\numberwithin{figure}{section}
\numberwithin{table}{section}

\begin{document}

\title[Pointwise convergence for Schr\"odinger equations]{Pointwise convergence for  single-particle   Schr\"odinger equation and  Schr\"odinger equation with infinitely many particles   on the torus and waveguide manifolds}

\author{Divyang G. Bhimani}
\address{Department of Mathematics, Indian Institute of Science
  Education and Research Pune, Homi Bhabha Road, Pune 411008, India}
\email{divyang.bhimani@iiserpune.ac.in}

\author{Subhash R. Choudhary}
\address{Department of Mathematics, Indian Institute of Science
  Education and Research Pune, Homi Bhabha Road, Pune 411008, India}
\email{subhashranjan.choudhary@students.iiserpune.ac.in}

\subjclass[2020]{Primary 35Q55, 35B45; Secondary 42B37}
\keywords{Pointwise convergence, Schr\"odinger equation, maximal-in-time Strichartz estimate, orthonormal systems, waveguide manifold.}

\begin{abstract}
	In 1980, Carleson posed a question about the least regularity required for initial data in a Sobolev space $H^s$ to ensure pointwise convergence of the solution to the single particle linear Schr\"odinger equation. In this paper, we study Carleson problem on the $d$-dimensional  waveguide manifold $\R^n\times\T^m$ and initiate its analogy for the system of infinitely many orthonormal particles (arising from the transition of  many-body quantum mechanics to the thermodynamic limit)  on torus and waveguide manifold.
	
	For a single particle on the waveguide manifold, we establish a maximal-in-time Strichartz estimate that yields almost everywhere convergence to the initial data in $H^s$ for $s > \frac{d}{d+2}$. On the other hand, we adopt Bourgain's counterexample approach to show that this convergence fails for $s<  \frac{d}{2(d+1)}.$ For infinitely many particles, we generalize the classical maximal-in-time Strichartz estimate and  pointwise convergence  result  of  \citeauthors*{CompaanLuca2021pointwise}  \cite{CompaanLuca2021pointwise} in the setting of fermionic systems on the torus. Similar result is also established for the waveguide manifold.   We also establish necessary condition (in the spirit of Bourgain's counterexample) for the pointwise convergence problem for fermionic systems.
	These are the first results in the setting of torus and waveguide manifold, and complement the works of  \citeauthors*{BezLeeNakamura2020maximalose} \cite{BezLeeNakamura2020maximalose} and \citeauthors*{BezKinoshitaShinyaShiraki2024} \cite{BezKinoshitaShinyaShiraki2024} on Euclidean space.
\end{abstract}
\maketitle
\tableofcontents
\section{Introduction}\label{sec:intro}
\subsection{Single particle Schr\"odinger equation}
In this paper, we first  study Carleson's problem regarding pointwise convergence for the linear Schr\"odinger equation on waveguide manifold $\mathbb R^n \times \mathbb T^m.$ Indeed, the Carleson problem \cite{Carleson1980point}   goes back to 1980s, which asks  for the minimal regularity  of the initial  data  $f$ so that time-dependent  solution of free  Schr\"odinger equation $u(x,t)$   converges to $f(x)$ pointwise at  time $t\to 0$. Since then this problem has a long history (to be described briefly in \Cref{pwones} below).  Specifically, we begin with the Schr\"odinger equation of the following form:
\begin{equation}\label{NLS}
	\begin{cases}
		i\partial_tu +\Delta u = 0\\
		u(t=0)=f
	\end{cases} (t,z) \in \mathbb R \times \mathcal{M}.  
\end{equation}
Here $\mathcal{M}= \mathbb R^d$ (Euclidean space) or $\mathbb T^d$ (torus) or $\mathbb R^n \times \mathbb T^m$ (waveguide manifold). We recall that 
\begin{align}\label{about manifold}
	\widehat{\mathcal{M}}= \begin{cases}
		\mathbb R^d & if \quad \mathcal{M}=\mathbb R^d\\
		\mathbb Z^d & if \quad \mathcal{M}=\mathbb T^d\\
		\mathbb R^n \times \mathbb Z^m & if \quad   \mathcal{M}=\mathbb R^n \times \mathbb T^m  
	\end{cases},   
\end{align}
where $\widehat{\mathcal{M}}$ denotes the Pontryagin dual of $\mathcal{M}.$ Here 
$e^{it \Delta_\mathcal{M}}f(z)$ denotes the  Schr\"odinger propagator:
\[
\mathcal{F}\left[e^{it \Delta_\mathcal{M}}f(z)\right](\xi) 
= \begin{cases}
	e^{2 \pi it \abs{\xi}^2 } \mathcal{F}f (\xi) & if \quad \xi \in \mathbb{R}^d \text{ or } \mathbb{Z}^d \\
	e^{2 \pi it (\abs{\xi_{1}}^2 +\abs{{\xi_{2}}}^{2}) } \mathcal{F}f (\xi) & if \quad \xi=(\xi_{1}, \xi_{2}) \in \mathbb{R}^n \times \mathbb{Z}^m
\end{cases} , 
\]
where $\mathcal{F}$ is the Fourier transform defined on $\M.$ Then the solution of the \cref{NLS} is $u(t,z)=e^{it \Delta_{\M}}f(z).$ 
As we mentioned earlier, the problem of identifying the smallest exponent $s>0$ for which 
\begin{equation}\label{e:carleson}
	\lim_{t\to0} e^{it \Delta_{\M}}f(z) = f(z) \quad {\rm a.e.}\;\;\; z\in \M
\end{equation}
holds for all $f\in H^s(\M)$ originated in the famous paper by Carleson \cite{Carleson1980point} for $\M=\R.$ Here, 
$$H^s(\mathcal{M}) = \langle \nabla_{\M} \rangle^{-s}L^2(\M)=\left\{ f \in \mathcal{D}'(\mathcal{M}) :  \left(1+|\xi|^2 \right)^{\frac{s}{2}} \mathcal{F}(f) \in L^2(\hM) \right\},$$
is the inhomogeneous Sobolev space of order $s$.
The standard way to tackle  this pointwise convergence problem is to consider maximal-in-time estimates of the form 
\begin{equation*}
	\big\|\sup_{t \in \R} |e^{it \Delta_{\M}}f| \big\|_{L^q_z(\M)} \le C\|f\|_{H^s(\M)}
\end{equation*}
or its local variants,  since standard arguments allow one to deduce \eqref{e:carleson} for all $f \in H^s(\M)$. Whilst local space-time bounds of this type suffice for the purpose of deducing \eqref{e:carleson}.

We denote $P_Nf$ the  Littlewood-Paley projection operators (see \eqref{lpope}). Our first main result gives maximal-in-time Strichartz estimates on waveguide manifold. Specifically we have the following theorem.
\begin{theorem}\label{max stri for wave}
	For every $N\geq 1,\varepsilon > 0,$
	\begin{equation*}
		\| \sup_{0 < t \leq 1} \left| e^{it\Delta} P_N f(x,y) \right| \|_{L^2_{x,y} \left(B^n(0,1)\times \mathbb{T}^m \right)}
		\lesssim_\varepsilon N^{\frac{n+m}{n+m+2}+\varepsilon}
		\|f\|_{L^2(\mathbb{R}^n \times \mathbb{T}^m)}
	\end{equation*}
	holds whenever $f \in L^2(\mathbb{R}^n \times \mathbb{T}^m).$
\end{theorem}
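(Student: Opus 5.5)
The plan is to combine a scale-invariant Strichartz estimate on $\R^n\times\T^m$ at the endpoint exponent $p=\frac{2(d+2)}{d}$, where $d=n+m$, with a Bernstein-type inequality in the time variable. The time frequencies of $e^{it\Delta}P_Nf$ are of size $\lesssim N^2$, so passing from $L^p_t$ to $L^\infty_t$ should cost $N^{2/p}$. Since $\frac{2}{p}=\frac{d}{d+2}$, this is exactly the claimed power.

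\emph{Step 1 (Strichartz input).} I will use the $\eps$-loss Strichartz estimate on the waveguide. It follows from $\ell^2$-decoupling for the paraboloid (Bourgain--Demeter), as carried out by Barron for $\R^n\times\T^m$:
\begin{equation*}
\|e^{it\Delta}P_Nf\|_{L^p_{t,x,y}([0,1]\times\R^n\times\T^m)}\lesssim_\eps N^{\eps}\|f\|_{L^2(\R^n\times\T^m)},\qquad p=\tfrac{2(d+2)}{d}.
\end{equation*}
If only a local-in-space version is available, I will use the version on $[-2,2]\times B^n(0,C)\times\T^m$; this is all that is needed.

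\emph{Step 2 (time Bernstein).} Fix $\chi\in C_c^\infty(-2,2)$ with $\chi=1$ on $[0,1]$, and set $F(t,x,y)=\chi(t)e^{it\Delta}P_Nf(x,y)$. On the Fourier side in $(x,y)$, the temporal Fourier transform of $F$ is $\widehat{\chi}(\tau-2\pi|\xi|^2)\widehat{P_Nf}(\xi)$, with $|\xi|\lesssim N$. Hence it is concentrated in $|\tau|\lesssim N^2$, up to Schwartz tails.

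Choose $\psi$ Schwartz with $\widehat\psi=1$ on $[-CN^2,CN^2]$ at scale $N^2$, i.e.\ $\psi_{N}(t)=N^2\psi(N^2t)$. Write $F=\psi_N*_tF+R$, where the remainder $R$ involves only $\tau$ far outside $CN^2$. Young/Hölder in $t$ gives
\begin{equation*}
\sup_t|\psi_N*_tF|\le\|\psi_N\|_{L^{p'}}\|F\|_{L^p_t}\lesssim N^{2/p}\|F\|_{L^p_t}.
\end{equation*}
The remainder is handled by the rapid decay of $\widehat\chi$: for $|\tau|\ge CN^2$ one has $|\tau-2\pi|\xi|^2|\gtrsim|\tau|$. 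Then $\sup_t|R|\le\int|\widehat R(\tau)|\,d\tau$, and after Plancherel in $(x,y)$ this is $\lesssim N^{-100}\|f\|_{L^2}$ in $L^2_{x,y}$.

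\emph{Step 3 (assembling).} Take $L^2_{x,y}(B^n(0,1)\times\T^m)$ norms. Since the domain has finite measure, Hölder gives $L^2\le L^p$ there, and Minkowski lets me move the norms inside the convolution. Then $\|\psi_N*_tF\|_{L^p_{x,y}}$ is bounded by $\|F\|_{L^p_{t,x,y}}$ times $N^{2/p}$. Step 1 then gives $N^{\frac{d}{d+2}+\eps}\|f\|_{L^2}$. The spatial localization to $B^n(0,1)$ is harmless, because we only need the Strichartz norm over a time interval of length $\sim1$ and may enlarge the spatial domain to all of $\R^n$.

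\emph{Main obstacle.} The main obstacle is Step 1, i.e.\ justifying the sharp (up to $N^\eps$) $L^{2(d+2)/d}$ Strichartz estimate on the product $\R^n\times\T^m$ for frequencies up to $N$ and times of order $1$. I will first try to quote Barron's decoupling-based result. Failing that, I will derive it directly. The route is to apply decoupling for the $d$-dimensional paraboloid on $N^{-1}$-caps, after periodizing the $\T^m$ directions. On the lattice-discrete directions, frequencies are separated by $1$, and cap-level orthogonality over the time interval $[0,1]$ holds just as in the torus argument. The remaining steps are routine.
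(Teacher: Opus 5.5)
Your proposal is correct. Its architecture matches the paper's: both quote Barron's $\varepsilon$-loss Strichartz estimate at $p=\frac{2(d+2)}{d}$ (\Cref{diag strichartz}), pay $N^{2/p}=N^{\frac{d}{d+2}}$ to upgrade $L^p_t$ to $L^\infty_t$, and finish with H\"older on the finite-measure set $B^n(0,1)\times\T^m$.

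The difference is only in how the time upgrade is done.
\begin{itemize}
\item The paper applies Lee's calculus lemma (\Cref{calculus lemma}) with $\mu=N^2$ and $\partial_t e^{it\Delta}P_Nf=i\Delta e^{it\Delta}P_Nf$, then trades $\Delta$ for $N^2$ by spatial Bernstein. This produces an extra initial-value term $|P_Nf(x,y)|$. That term needs a separate $L^2\to L^p$ Bernstein bound, which happens to cost the same $N^{\frac d2-\frac dp}=N^{\frac{d}{d+2}}$. Also, the derivative-for-$N^2$ exchange holds only after integrating in space, not pointwise in $(x,y)$ as the paper's intermediate display suggests. So that argument really runs in $L^p$ over all of $\R^n\times\T^m$, using the global-in-space Strichartz bound.
\item Your time-frequency Bernstein argument has no endpoint term: a smooth cutoff $\chi$, a reproducing kernel at time scale $N^{-2}$, and a remainder whose temporal Fourier transform decays rapidly, controlled by Plancherel. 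Its main step is H\"older in $t$ pointwise in $(x,y)$. So, as you observe, a Strichartz estimate localized to $[-2,2]\times B^n(0,1)\times\T^m$ would already suffice.
\end{itemize}

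The costs of your route are the Schwartz-tail bookkeeping for $R$ and the need for Strichartz on $[-2,2]$ rather than $[0,1]$. The latter is harmless, since \Cref{diag strichartz} holds on any bounded interval.
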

Consequently, we have the following pointwise convergence result   on waveguide manifold.
\begin{corollary}[sufficient condition]\label{genind}
	For every $f \in H^s(\R^n \times \T^m)$ with $s> \frac{n+m}{n+m+2},$ we have 
	\begin{equation*}
		\lim_{t \to 0} e^{it\Delta} f(x,y)=f(x,y)
	\end{equation*}
	holds for almost every $(x,y) \in \R^n \times \T^m.$
\end{corollary}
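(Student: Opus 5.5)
The plan is to deduce \cref{genind} from the frequency-localized maximal estimate of \cref{max stri for wave}. The argument has three parts: sum the localized bound over dyadic frequencies, use a density argument, and finally remove the localization to $B^n(0,1)$ and to $0<t\le 1$.

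\textbf{Step 1 (summing dyadic pieces).} Fix $s>\frac{n+m}{n+m+2}$ and choose $\varepsilon>0$ with $\frac{n+m}{n+m+2}+\varepsilon<s$. Decompose $f=\sum_{N\ge1\text{ dyadic}}P_Nf$, where $P_1$ collects the low frequencies. Since the maximal operator is sublinear,
\begin{equation*}
\Big\|\sup_{0<t\le1}|e^{it\Delta}f|\Big\|_{L^2(B^n(0,1)\times\T^m)}\le\sum_N\Big\|\sup_{0<t\le1}|e^{it\Delta}P_Nf|\Big\|_{L^2(B^n(0,1)\times\T^m)}\lesssim_\varepsilon\sum_N N^{\frac{n+m}{n+m+2}+\varepsilon}\|P_Nf\|_{L^2}.
\end{equation*}
Bound $\|P_Nf\|_{L^2}\lesssim N^{-s}\|f\|_{H^s}$. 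The resulting geometric series converges, which gives
\begin{equation*}
\Big\|\sup_{0<t\le1}|e^{it\Delta}f|\Big\|_{L^2(B^n(0,1)\times\T^m)}\lesssim\|f\|_{H^s(\R^n\times\T^m)}.
\end{equation*}
This is first established for Schwartz-type $f$ (smooth, with finitely many $\T^m$-modes and compactly supported $\R^n$-frequency), where the supremum is measurable, and then extended to all of $H^s$ by monotone approximation.

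\textbf{Step 2 (translation to arbitrary balls).} The propagator $e^{it\Delta}$ commutes with translations in $x$, and $H^s$ norms are translation invariant. The same bound therefore holds on $B^n(x_0,1)\times\T^m$ for every $x_0\in\R^n$. Countably many such sets cover $\R^n\times\T^m$, so it suffices to prove convergence almost everywhere on each of them.

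\textbf{Step 3 (density argument).} Let $\mathcal{D}$ be the dense class of functions whose Fourier transform in $\widehat{\M}=\R^n\times\Z^m$ is compactly supported. For $g\in\mathcal{D}$, $e^{it\Delta}g(x,y)=\int\sum e^{2\pi i(\ldots)}e^{2\pi it(|\xi|^2+|k|^2)}\hat g\,d\xi$ is continuous in $(t,x,y)$ by dominated convergence, so $e^{it\Delta}g\to g$ everywhere. For general $f\in H^s$ and $\lambda>0$, take $g\in\mathcal{D}$ with $\|f-g\|_{H^s}<\delta$. Then
\begin{equation*}
\limsup_{t\to0}|e^{it\Delta}f-f|\le\sup_{0<t\le1}|e^{it\Delta}(f-g)|+|f-g|
\end{equation*}
pointwise. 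Apply Chebyshev's inequality together with the maximal bound from Step 1, and use the trivial $L^2$ bound for $|f-g|$. This gives
\begin{equation*}
\big|\{(x,y)\in B^n(x_0,1)\times\T^m:\limsup_{t\to0}|e^{it\Delta}f-f|>\lambda\}\big|\lesssim\lambda^{-2}\delta^2.
\end{equation*}
Letting $\delta\to0$ and then taking the union over rational $\lambda$ yields convergence almost everywhere.

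The only genuinely nontrivial input is \cref{max stri for wave}, which we are allowed to assume. Given that, the step needing care is Step 1: the low-frequency piece must be handled (it is covered by the $N=1$ case), and the strict inequality $s>\frac{n+m}{n+m+2}$ is used exactly to absorb the $N^\varepsilon$ loss so that the dyadic sum converges.
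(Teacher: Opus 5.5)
Your proposal is correct and follows the paper's own argument: the paper feeds \Cref{max stri for wave} into \Cref{ulc} for the dyadic summation (with $s_0=\frac{n+m}{n+m+2}$, $p=2$), and then applies \Cref{application of maximal strichartz} for convergence on a dense class, Chebyshev, and translation from $B^n(0,1)$ to $B^n(x_0,1)$; these are exactly your Steps 1--3, and using a compact-Fourier-support dense class instead of the paper's Schwartz-type class $\mathcal{D}$ makes no difference. One small technicality, which the paper also glosses over: \Cref{max stri for wave} as stated has $\|f\|_{L^2}$ on the right, so to get $N^{-s}\|f\|_{H^s}$ you should apply it to $\tilde P_N f$, where $\tilde\psi_N\equiv 1$ on $\operatorname{supp}\psi_N$ and $\tilde\psi_N$ is supported where $|\xi|\sim N$ (so $P_N\tilde P_N f=P_N f$ and $\|\tilde P_N f\|_{L^2}\lesssim N^{-s}\|f\|_{H^s}$).
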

We refer to  Remark \ref{pst1.1} for the comments on the proof. 
We do not know whether  the regularity index $s$ in \Cref{genind} is optimal or not. However, we could  establish necessary condition for the point-wise convergence  in the following theorem.
\begin{theorem}[necessary condition]\label{Necessary condition for theorem}
	Let $n,m\geq1$. For $s < \frac{n+m}{2(n+m+1)},$  there exists $f \in H^s(\mathbb{R}^n \times \mathbb{T}^m)$ such that
	\[
	\lim_{t \to 0} e^{it\Delta} f(x,y) \neq f(x,y) \quad { for \ \rm a.e.}\;\;\; (x,y) \in  \mathbb{R}^n \times \mathbb{T}^m.
	\]
\end{theorem}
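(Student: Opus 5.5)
We will adapt Bourgain's counterexample on $\R^d$ (showing failure for $s<\frac{d}{2(d+1)}$) to $\R^n\times\T^m$ with $d=n+m$. By Stein's maximal principle (or the Nikishin--Stein factorization argument, applicable since $e^{it\Delta}$ commutes with translations in both variables), almost everywhere convergence for all $f\in H^s$ implies a weak-type local maximal estimate
\[
\Bigl\|\sup_{0<t<1}|e^{it\Delta}f|\Bigr\|_{L^{2,\infty}(B^n(0,1)\times\T^m)}\lesssim \|f\|_{H^s}.
\]
The contrapositive is what we use. It therefore suffices to construct, for large $R$, data $f_R$ with $\widehat{f_R}$ supported at frequencies $\sim R$. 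We need the set where $\sup_t|e^{it\Delta}f_R|\gtrsim \|f_R\|_{L^2}\cdot R^{\sigma}$ to have measure $\gtrsim 1$, with $\sigma=\frac{d}{2(d+1)}-$, which then forces $s\ge \frac{d}{2(d+1)}$.

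\textbf{Construction.} Following Bourgain, we take a sum of modulated wave packets
\[
f(x,y)=\phi(x_1)\,e^{2\pi i R x_1}\prod_{j=2}^{n}\Bigl(\sum_{k_j}e^{2\pi i D k_j x_j}\Bigr)\phi(x_j)\prod_{l=1}^{m}\sum_{k'_l}e^{2\pi i D k'_l y_l},
\]
where $D=R^{\alpha}$, the integers $k_j,k'_l$ range over $|k|\lesssim R/D$, and $\phi$ is a bump at scale $R^{-1/2}$-type cut (chosen so that $x_1$-transport over time $t\sim R^{-1}$ moves packets by $\sim 1$). On the torus directions the frequencies $Dk'$ are honest integers provided $D\in\Z$, which is natural.

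\textbf{Evolution.} The phases $e^{2\pi i t D^2|k|^2}$ come from the periodic pieces. By Galilean transformation in $x_1$, the solution at time $t$ is concentrated near $x_1\approx 2Rt$. Choosing $t=x_1/(2R)+\tau$ with $\tau\in D^{-2}\Z$ perturbations, the quadratic phases become rational, and Gauss-sum/Dirichlet-kernel constructive interference occurs in the remaining $d-1$ variables. The key is Dirichlet's approximation: for each $(x',y)$ we find a rational time with denominator $q\lesssim$ some $Q$ making all $D k\cdot x'$, $D^2|k|^2 t$ phases nearly aligned. The parameter $\alpha$ is then optimized exactly as in Bourgain's count, giving $\frac{d}{2(d+1)}$ with $d=n+m$. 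The torus factors behave exactly like the periodized Euclidean variables in Bourgain's (Lucà--Rogers style) example, since his construction is already periodic in the transversal variables. So the $\T^m$ factor requires no new idea; only $n\ge1$ is needed to supply the "transport" direction $x_1$.

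\textbf{Main obstacle.} The main obstacle is the measure estimate: showing that for a set of $(x,y)\in B^n(0,1)\times\T^m$ of measure $\gtrsim1$ there is some $t\in(0,1)$ at which all $d-1$ sums interfere constructively simultaneously. I would first try to import Bourgain's Lemma (via the Lucà--Rogers ergodic/equidistribution argument) verbatim, treating $y$ as the periodic variable. Then I would check that the $x_1$ time-scale $t\sim x_1/R\le 1$ is compatible. Finally, the normalization $\|f_R\|_{H^s}\sim R^{s}\|f_R\|_2$ gives the contradiction as $R\to\infty$.
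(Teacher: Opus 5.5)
Your construction and reduction coincide with the paper's. The $x_1$ packet $e^{2\pi iRx_1}\phi(R^{1/2}x_1)$ supplies the transport direction. The lattice sums $\sum_\ell e^{2\pi i D\ell\cdot(x',y)}$ with $D\in\mathbb Z$ make the $\mathbb T^m$ factor a genuine trigonometric polynomial, so the periodic variables are treated like $x'$. The evolution reduces to Gauss sums at times with $D^2t=p_1/q$, and Stein's principle converts a.e.\ convergence into a weak-type bound. (The paper then passes to $L^1$ by a layer-cake argument; your direct use of $L^{2,\infty}$ makes that unnecessary.)

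The real difference is the measure step. The paper forms the union $X_R$ of boxes of radius $cR^{-1/2}$ in $x_1$ and $c/R$ in $(x',y)$ around $(Rp_1/(D^2q),\,p'/(Dq))$, $q\sim Q$. It rescales by $T(x_1,z)=(D^2x_1/R,\,Dz)$ and quotes the counting Lemma A.1. That lemma needs only the single balance $Q^{d+1}h_1h_2^{d-1}\sim 1$ (with $h_1=D^2R^{-3/2}$, $h_2=D/R$) and has the restrictions $q$ odd, $\gcd(p_1,q)=1$ built in.

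Your Dirichlet route (Bourgain's original one; the Lucà--Rogers lemma is a further alternative) is more elementary and works pointwise in the transversal variable. The price is two separate relations and some bookkeeping:
\begin{enumerate}
\item Dirichlet with $q\le Q$ gives precision $Q^{-1/(d-1)}/q$ for $Dz$. This is $\lesssim D/R$ only when $q\gtrsim Q$ and $Q^{d/(d-1)}\sim R/D$. Transversal points with denominator $\le\eta Q$ form a set of measure $O(\eta)$ and are discarded.
\item For a positive proportion of $x_1$, you need a time within $c_0R^{-3/2}$ of $\mp x_1/(2R)$ with $D^2t=p_1/q$ and $\gcd(p_1,q)=1$. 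For even $q$ you also need a parity fix, e.g.\ passing to denominator $2q$.
\end{enumerate}

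This second point is where your sentence about $\tau\in D^{-2}\mathbb Z$ must be corrected. Such shifts leave $D^2t \bmod 1$, hence the quadratic phases, unchanged. They also move the $x_1$ packet by at least $2R/D^2$, which is $\gg R^{-1/2}$ since $D^2\ll R^{3/2}$ here, so the packet leaves its window. The admissible times actually form the lattice $(qD^2)^{-1}\mathbb Z$. Matching its spacing $(QD^2)^{-1}$ with the tolerance $R^{-3/2}$ gives the second relation $QD^2\sim R^{3/2}$.

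The two relations force exactly the paper's choices $D=R^{\frac{d+2}{2(d+1)}}$ and $Q=R^{\frac{d-1}{2(d+1)}}$. They give the gain $R^{1/4}(R/(DQ))^{\frac{d-1}{2}}=R^{\frac{d}{2(d+1)}}$, so your route, once corrected, reaches the same threshold.
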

We point out that well-posedness theory for  nonlinear Schr\"odinger equation on waveguide manifold has been studied by many authors. See for e.g. Barron \cite{BarronAlex}, Takaoka--Tzvetkov \cite{Takaoka2001} and Deng--Fan--Yang--Zhao--Zheng \cite{Deng2024JFA}. However, to the best of author's knowledge \Cref{max stri for wave,Necessary condition for theorem} are the first results that deal with the pointwise convergence problem. This also complements  the extensive literature available on torus and Euclidean space (see \Cref{pwones}).
\begin{remark}  The point-wise convergence  problem for $s \in  \left[ \frac{n+m}{2 (n+m+1)},  \frac{n+m}{ (n+m+2)}  \right] $ remains an interesting open question.   Indeed, the regularity threshold for pointwise convergence obtained here also coincides with the expected sharp threshold suggested by the Euclidean and torus literature. See \Cref{pwones}. In the waveguide setting \(\mathbb{R}^n\times\mathbb{T}^m\), the same exponent appears with \(d=n+m\), reflecting the mixed Euclidean-periodic geometry of the problem. Hence, our results show that the sharp behaviour predicted separately by the Euclidean and torus theories persists in the waveguide setting as well. Cf. \Cref{Necessary condition for torus}. 
\end{remark}

\begin{remark}[notations]  We simply write $\nabla$ instead of $\nabla_{\M}$ and $\Delta$ instead of $\Delta_{\M}$. It will be clear from the context, without any ambiguity, which manifold we are working on.  We write \( A \lesssim B \) to denote that there exists a constant \( C > 0 \) such that \( A \leq C B \). Similarly, we use \( A \lesssim_u B \) to indicate that there exists a constant \( C(u) > 0 \), depending on \( u \in \R \), such that \( A \leq C(u) B \).
	We write \( A \sim B \) if both \( A \lesssim B \) and \( B \lesssim A \) hold. And we also define the following
	\begin{equation}\label{A(N) defined}
		A_{\M}(N)
		:= \left\{ \xi \in \hM : |\xi| \sim N \right\}, \qquad N \geq 1.
	\end{equation}
\end{remark}

\subsubsection{\textbf{Prior works:}}\label{pwones} We briefly mention  development  on Carleson's problem  on $\mathbb R^d$ and $\mathbb T^d$.\\
$-$ \textbf{Pointwise convergence  on $\M=\R^d$}.
In this case when $d = 1,$ Carleson \cite{Carleson1980point} proved the convergence of \eqref{e:carleson} for $s \ge \frac{1}{4}$, whereas it fails for $s < \frac{1}{4}$ in any dimension, as shown by Dahlberg--Kenig \cite{DahlbergKenig1982}.
In higher dimensions, Sj\"olin \cite{Sjolin1987regularity} and Vega \cite{Vega1988pointwise} independently showed that \eqref{e:carleson} holds for $s > \frac{1}{2}.$ This result was improved to $s > \frac{1}{2} - \frac{1}{4d}$ by Lee \cite{Lee2006pointwiseR2} for $d = 2$ and by Bourgain \cite{Bourgain2013higherdimension} for $d \ge 3.$ Subsequently, Bourgain \cite{Bourgain2016note} showed that $s \ge \frac{d}{2(d + 1)}$ is necessary for the almost everywhere convergence. The sufficiency part of the convergence was shown by Du--Guth--Li \cite{DuGuthLi2017sharpR2} when $d = 2$ and by Du--Zhang \cite{DuZhang2019maximalL2} when $d \ge 3$ for a sharp range except for the endpoint. See also \cite{Bourgain1995essay,Carbery1985radial,Cowling1983pointwise,DuGuthLi2018pointwisemultilinear,LucaRogerkeith2017fractal,LucaRogerKeith2019pointwise,MoyuaVergasVega1999restriction} for previous work.

The following proposition was first proved in the case $n=1$ with the
$L^4$-norm by Kenig--Ponce--Vega \cite{Kenig_1991_OsciDis}; in this case,
the choice of the $L^2$-norm is sub-optimal. Later, Du--Guth--Li
\cite{DuGuthLi2017sharpR2} proved the result for $n=2$ using polynomial
partitioning and $\ell^2$-decoupling estimates. Finally, Du--Zhang
\cite{DuZhang2019maximalL2} extended the result to all dimensions
$n\geq 3$.
\begin{knownproposition}[Theorem 2.2 in \cite{DuZhang2019maximalL2}]\label{maximal stri for Euclidean}
	Let $n \geq 1.$ For all $\varepsilon > 0$ and all $N \geq 1$, the inequality
	\begin{equation}\label{e:maximal stri for eucli}
		\left\| \sup_{0 \le t \le N} |e^{it\Delta} f| \right\|_{L^2(B^n(0,N))}
		\lesssim_\varepsilon N^{\frac{n}{2(n+1)}+\varepsilon} \|f\|_{L^2(\mathbb{R}^n)}
	\end{equation}
	holds whenever $f \in L^2(\mathbb{R}^n)$ satisfies $\operatorname{supp} \widehat{f} \subset B^n(0,1).$
\end{knownproposition}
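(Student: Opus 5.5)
We would follow the strategy of Du--Guth--Li \cite{DuGuthLi2017sharpR2} and Du--Zhang \cite{DuZhang2019maximalL2}. The first step is to rescale by parabolic scaling $x\mapsto Rx$, $t\mapsto R^2t$ with $R=N$, which turns \eqref{e:maximal stri for eucli} into an estimate for functions with $\operatorname{supp}\widehat f\subset B^n(0,1)$ over the space-time region $B^n(0,R)\times[0,R]$. Next we linearize the supremum by choosing a measurable $t(x)$. We then discretize into unit lattice cubes and reduce to a fractal $L^2$ restriction-type bound: for a union $X$ of unit cubes in $B^{n+1}(0,R)$ such that each vertical column contains at most one cube, we aim to show
\begin{equation*}
\norm{e^{it\Delta}f}_{L^2(X)}\lesssim_\varepsilon R^{\frac{n}{2(n+1)}+\varepsilon}\norm{f}_{L^2}.
\end{equation*}
More generally, we would prove a weighted version in which $X$ has density parameter $\gamma$ (the maximal number of cubes in an $r$-ball, normalized by $r^{\alpha}$), because the induction needs this flexibility.

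The core of the argument is a broad--narrow decomposition at scale $K=R^{\delta}$. We divide frequency space into $K^{-1}$-caps $\tau$ and write $f=\sum_\tau f_\tau$. At each unit cube we then decide which of two situations holds:
\begin{itemize}
\item in the \emph{narrow} case, the contributions concentrate near a lower-dimensional subspace;
\item in the \emph{broad} case, $k$-transversal caps contribute comparably.
\end{itemize}
In the narrow case, Bourgain--Demeter $\ell^2$ decoupling for the paraboloid restricted to a neighborhood of an $(m-1)$-plane splits the function into caps. Parabolic rescaling of each cap then produces the same problem at a smaller scale $R/K^2$, and induction on $R$ closes with a gain of a small power of $K$. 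In the broad case, we would combine two ingredients: the Bennett--Carbery--Tao multilinear restriction (Kakeya) estimate, and a \emph{refined Strichartz estimate}. The latter is proved via wave packet decomposition and induction on scales using decoupling. It says that if the wave packets of $f$ are concentrated in a small number of tubes through the lateral $R^{1/2}$-cubes, one gains in $L^{2(n+1)/(n-1)}$ or in the multilinear $L^{2(n+1)/n}$ norm.

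The main obstacle is the broad-case estimate in all dimensions. Du--Guth--Li's polynomial partitioning works only for $n=2$, and for $n\ge3$ Du--Zhang instead use a multilinear refined Strichartz estimate: a $k$-broad bound in the Guth style, combined with the fractal dyadic pigeonholing. The difficulty is to combine the following:
\begin{itemize}
\item the refined Strichartz gain, which counts how many $R^{1/2}$-cubes of $X$ lie in each lateral slab;
\item the one-cube-per-column condition on $X$, used via Hölder to pass from $L^{2(n+1)/n}$ to $L^2(X)$;
\item the exponent bookkeeping, which must produce exactly $\frac{n}{2(n+1)}$.
\end{itemize}
We would first try to prove the linear refined Strichartz estimate by induction on scales. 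We would then upgrade it to the $k$-linear (broad) version by applying multilinear Kakeya at scale $R^{1/2}$ to the tube counts. Finally, we would apply Hölder on $X$ with the multilinear $L^{\frac{2(n+1)}{n}}$ exponent, pigeonholing the cubes of $X$ by density. The $\varepsilon$-losses from decoupling and from the $\log R$ pigeonholing are absorbed into $R^{\varepsilon}$. Undoing the rescaling with $R=N$ then gives \eqref{e:maximal stri for eucli}.
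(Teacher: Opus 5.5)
The paper gives no proof of this statement. It is imported from Du--Zhang, so your outline can only be compared with the cited argument, whose architecture you reproduce: linearize $t(x)$, discretize to a union $X$ of unit cubes meeting each vertical line at most once, carry a density parameter, split broad/narrow at scale $K=R^{\delta}$, use lower-dimensional decoupling plus parabolic rescaling, and invoke refined Strichartz.

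Two details are off.
\begin{itemize}
\item Your opening rescaling is vacuous. The statement is already normalized, with $\operatorname{supp}\widehat f\subset B^n(0,1)$ on $B^n(0,N)\times[0,N]$.
\item As far as I recall, the refined Strichartz estimates of Du--Guth--Li--Zhang and Du--Zhang live at the decoupling exponent $\frac{2(n+2)}{n}$ of the paraboloid in $\R^{n+1}$. The exponent $\frac{2(n+1)}{n-1}$ belongs to one dimension down and is meaningless for $n=1$.
\end{itemize}
More importantly, you have put the difficulty in the wrong place. For the set $X$ you actually need, the fully transversal term requires no refinement. H\"older with $|X|\lesssim R^{n}$ and Bennett--Carbery--Tao give
\[
\Big\|\prod_{i=1}^{n+1}|e^{it\Delta}f_{\tau_i}|^{\frac{1}{n+1}}\Big\|_{L^2(X)}
\le |X|^{\frac{1}{2(n+1)}}\Big\|\prod_{i=1}^{n+1}|e^{it\Delta}f_{\tau_i}|^{\frac{1}{n+1}}\Big\|_{L^{\frac{2(n+1)}{n}}(B^{n+1}(0,2R))}
\lesssim_{\varepsilon} K^{O(1)}R^{\frac{n}{2(n+1)}+\varepsilon}\prod_{i=1}^{n+1}\|f_{\tau_i}\|_{2}^{\frac{1}{n+1}},
\]
so the broad term is not the main obstacle you describe.

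The genuine gap is the narrow step, i.e.\ the passage from multilinear to linear on a sparse set, which your sketch only asserts.
\begin{itemize}
\item \emph{Decoupling does not apply on $X$.} Decoupling is an inequality on full balls, not on the quantity you induct on. Suppose $X\cap B$ is a single unit cube on which the $\sim K^{m-1}$ caps near the subspace interfere constructively; you can arrange this by choosing the phases of the $\widehat f_\tau$. Then $\|\sum_\tau e^{it\Delta}f_\tau\|_{L^2(X\cap B)}$ exceeds $(\sum_\tau\|e^{it\Delta}f_\tau\|_{L^2(X\cap B)}^2)^{1/2}$ by a factor $K^{(m-1)/2}$, not $K^{\varepsilon}$.
\item \emph{Rescaling changes the problem.} Parabolic rescaling of a $K^{-1}$-cap shears and compresses $X$: unit cubes become sheared $K^{-1}\times\cdots\times K^{-1}\times K^{-2}$ boxes. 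The rescaled problem is therefore weighted, with weights counting the cubes of $X$ in slanted $K\times\cdots\times K\times K^{2}$ tubes. It is also posed on a set that no longer meets vertical lines at most once.
\end{itemize}
Closing the induction needs three things. You need an inductive statement in density parameters that is stable under exactly this operation. You need a treatment of the narrow term that avoids decoupling on $X$. You need to check that the exponents close at $\frac{n}{2(n+1)}$. That bookkeeping is the real content of Du--Zhang, together with, as I recall, their multilinear refined Strichartz estimate at the exponent $\frac{2(n+2)}{n}$. You never formulate the inductive hypothesis or verify that it survives rescaling, so ``induction on $R$ closes with a gain of a small power of $K$'' is a claim, not a step. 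As it stands the proposal summarizes the cited proof rather than proving the statement.
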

$-$ \textbf{Pointwise convergence  on $\M=\T^d$:}
Compared to the Euclidean case, the Carleson problem \eqref{e:carleson} on the torus is more subtle, due to the erratic behavior of the periodic Schr\"odinger kernel, which is linked to analytic number theory. Moyua--Vega \cite{MoyuaVega2008boundsfor} proved a maximal-in-time Strichartz estimate on $\T$, at frequency scale $N$ with exponent greater than $\frac13$ and deduced almost everywhere convergence for general initial data on $\mathbb{T}$ under the condition $s > \frac{1}{3}$. Following Moyua--Vega's argument together with the Strichartz inequality on $\mathbb{T}^d$, thanks to the decoupling theory, Compaan--Luc\`a--Staffilani \cite{CompaanLuca2021pointwise} proved the convergence for $s > \frac{d}{d+2}$ (see green region in \Cref{fig:s_range}), which is the best known result up to now and recovers the Moyua--Vega exponent at $d=1$.
\begin{knowntheorem}[Compaan--Luc\`a--Staffilani \cite{CompaanLuca2021pointwise}]\label{max stri for torus}
	Let $d \geq 1.$ For every $N\geq 1,\varepsilon > 0,$
	\begin{equation*}
		\| \sup_{0 < t \leq 1} \left| e^{it\Delta} P_Nf(x) \right| \|_{L^2_{x} \left(\mathbb{T}^d \right)}
		\lesssim_\varepsilon N^{\frac{d}{d+2}+\varepsilon}
		\|f\|_{L^2(\mathbb{T}^d)}
	\end{equation*}
	holds whenever $f \in L^2(\mathbb{T}^d).$ 
\end{knowntheorem}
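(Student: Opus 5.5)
The plan is the Moyua--Vega / Compaan--Luc\`a--Staffilani scheme: reduce the maximal estimate to an $L^\infty_t$ bound over a short time interval, then control that by a Strichartz estimate on $\T^d$ coming from $\ell^2$-decoupling.

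\textbf{Step 1 (time localisation).} Fix a frequency scale $N$ and a parameter $\delta \in (0,1]$ to be chosen. Split $[0,1]$ into about $\delta^{-1}$ intervals $I_j$ of length $\delta$. For $t\in I_j$, write $e^{it\Delta}P_Nf = e^{i(t-t_j)\Delta}g_j$, where $g_j = e^{it_j\Delta}P_Nf$. Then $\|g_j\|_{L^2(\T^d)}=\|P_Nf\|_{L^2}$, and it suffices to bound the maximal function over $t\in[0,\delta]$ and sum the pieces in $\ell^2$:
\[
\Big\|\sup_{0<t\le 1}|e^{it\Delta}P_Nf|\Big\|_{L^2_x}^2 \le \sum_j \Big\|\sup_{t\in I_j}|e^{it\Delta}P_Nf|\Big\|_{L^2_x}^2 .
\]
Each piece is handled by the next step.

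\textbf{Step 2 (Sobolev in time).} On an interval of length $\delta$, the time-frequency of $e^{it\Delta}P_Nf$ is $\lesssim N^2$. Bernstein in $t$ (or the fundamental theorem of calculus with a smooth time cutoff) gives a bound of the sup over $I$ by an $L^p_t(I)$ norm with loss $N^{2/p}$, provided $\delta\gtrsim N^{-2}$. For $\delta\sim N^{-2}$ the solution is essentially constant in time on $I$, so the sup costs nothing, but there are $N^2$ intervals. The trade-off is then made through H\"older in $x$ and the Strichartz bound: on the unit box, use
\[
\|\sup_{t\in I}|u|\|_{L^2_x}\le \|\sup_{t\in I}|u|\|_{L^p_x}
\]
and control $\|u\|_{L^p_{t,x}(I\times\T^d)}$ by the Bourgain--Demeter estimate
\[
\|e^{it\Delta}P_Nf\|_{L^p([0,1]\times\T^d)}\lesssim_\varepsilon N^{\frac d2-\frac{d+2}{p}+\varepsilon}\|f\|_{L^2},\qquad p\ge \tfrac{2(d+2)}{d}.
\]

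\textbf{Step 3 (optimisation).} Combine Bernstein in time, which costs $N^{2/p}$ for the sup over a time-frequency-$N^2$ function, with the Strichartz bound at the endpoint $p=\frac{2(d+2)}{d}$. With $\|\cdot\|_{L^2_x}\le\|\cdot\|_{L^p_x}$ on $\T^d$, this gives
\[
\Big\|\sup_{0<t\le 1}|e^{it\Delta}P_Nf|\Big\|_{L^2_x}\lesssim N^{2/p}\,\|e^{it\Delta}P_Nf\|_{L^p_{t,x}}\lesssim_\varepsilon N^{\frac{d}{d+2}+\varepsilon}\|f\|_{L^2}.
\]
Here $\frac2p=\frac{d}{d+2}$ and the Strichartz loss is $N^{\varepsilon}$ at the critical exponent. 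So in fact Steps 1 and 2 collapse into this single line: Bernstein in $t$ on the whole interval $[0,1]$ (after a smooth time cutoff, which is harmless because on $\T^d$ the Strichartz estimate holds on unit time intervals), followed by endpoint decoupling Strichartz.

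\textbf{Main obstacle.} The delicate point is justifying the time-Bernstein step rigorously. The time cutoff $\chi(t)$ smears the time frequency only by $O(1)$, so $\chi(t)e^{it\Delta}P_Nf$ has time frequencies in $[-CN^2,CN^2]$. The one-dimensional Bernstein inequality $\|F\|_{L^\infty_t}\lesssim N^{2/p}\|F\|_{L^p_t}$ then applies for each fixed $x$, up to Schwartz tails. I would handle those tails with a Littlewood--Paley cutoff in $t$ whose kernel is integrable. Apart from that, the only serious input is the Bourgain--Demeter $\ell^2$-decoupling Strichartz estimate on the rational torus.
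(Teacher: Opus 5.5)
Your argument is correct, and it is essentially the scheme the paper itself uses for the waveguide analogue, \Cref{max stri for wave}; the torus statement is only cited from Compaan--Luc\`a--Staffilani, not proved in the paper. In both, $\sup_t$ is traded for an $L^p_t$ norm at cost $N^{2/p}$, with $p=\frac{2(d+2)}{d}$ so that $N^{2/p}=N^{\frac{d}{d+2}}$; then the $\ell^2$-decoupling Strichartz estimate is applied with only an $N^{\varepsilon}$ loss, and one passes from $L^p_x$ to $L^2_x$ on a set of finite measure.

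The only difference is how the $N^{2/p}$ loss is obtained. The paper applies Lee's calculus lemma (\Cref{calculus lemma}) with $\mu=N^2$ and uses spatial Bernstein on $\partial_t u=i\Delta u$. This produces an extra $|P_Nf(x)|$ term, which is also of size $N^{\frac{d}{d+2}}$ after Bernstein. You instead use Bernstein directly in the time variable. On $\T^d$ your step needs no cutoff or tail analysis at all: $t\mapsto e^{it\Delta}P_Nf(x)$ is a trigonometric polynomial of degree $\lesssim N^2$ on $\T$, so Nikolskii's inequality applies directly.
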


As a consequence of \Cref{max stri for torus}, they proved the following. 
\begin{knowncorollary}[\cite{CompaanLuca2021pointwise}]\label{ptonT}
	Let $d \geq 1.$ Then for every $f \in H^s(\T^d)$ with $s> \frac{d}{d+2},$
	\begin{equation*}
		\lim_{t \to 0} e^{it\Delta} f(x)=f(x)
	\end{equation*}
	holds for almost every $x \in \T^d.$
\end{knowncorollary}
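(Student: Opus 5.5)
The statement to prove is \Cref{ptonT}: almost everywhere convergence on $\T^d$ for $s>\frac{d}{d+2}$. I would deduce it from \Cref{max stri for torus} by the standard density argument, organised around a maximal operator. Fix $s>\frac{d}{d+2}$ and choose $\varepsilon>0$ with $\frac{d}{d+2}+\varepsilon<s$. The key quantity is
\[
f^*(x):=\sup_{0<t\le 1}\abs{e^{it\Delta}f(x)} .
\]
The plan has three steps: bound $\|f^*\|_{L^2(\T^d)}$ by $\|f\|_{H^s(\T^d)}$, then approximate by trigonometric polynomials, and finally conclude with a measure estimate.

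\textbf{Step 1: the maximal estimate.} Decompose $f=\sum_{N\ \text{dyadic}}P_Nf$, with $P_1$ collecting the low frequencies. Since $t\mapsto e^{it\Delta}P_Nf$ is a finite trigonometric sum, the supremum is pointwise well defined, and the triangle inequality gives $f^*\le\sum_N (P_Nf)^*$. Applying \Cref{max stri for torus} to each piece and summing,
\[
\|f^*\|_{L^2(\T^d)}\lesssim_\varepsilon \sum_N N^{\frac{d}{d+2}+\varepsilon}\|P_Nf\|_{L^2}
\le \sum_N N^{\frac{d}{d+2}+\varepsilon-s}\, N^{s}\|P_Nf\|_{L^2}
\lesssim \|f\|_{H^s(\T^d)} .
\]
The last inequality is Cauchy--Schwarz in $N$, using that the geometric series $\sum_N N^{2(\frac{d}{d+2}+\varepsilon-s)}$ converges and that $\sum_N N^{2s}\|P_Nf\|_{L^2}^2\sim\|f\|_{H^s}^2$ by Plancherel. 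This is the only place where the strict inequality $s>\frac{d}{d+2}$ is used: it absorbs the $\varepsilon$-loss of \Cref{max stri for torus}.

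\textbf{Step 2: density.} Trigonometric polynomials are dense in $H^s(\T^d)$. For such a polynomial $g$, the function $e^{it\Delta}g(x)$ is a finite sum of continuous functions of $(t,x)$, so $e^{it\Delta}g(x)\to g(x)$ for every $x$ as $t\to0$.

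\textbf{Step 3: the measure estimate.} Given $f\in H^s$ and $\lambda>0$, set
\[
E_\lambda=\Bigl\{x:\limsup_{t\to0}\abs{e^{it\Delta}f(x)-f(x)}>\lambda\Bigr\}.
\]
Choose a trigonometric polynomial $g$ with $\|f-g\|_{H^s}<\delta$, and put $h=f-g$. Since $g$ converges everywhere, the $\limsup$ is controlled by $h$ alone:
\[
\limsup_{t\to0}\abs{e^{it\Delta}f-f}\le h^*+\abs{h}.
\]
By Chebyshev's inequality together with Step 1, and using $s\ge0$ so that $\|h\|_{L^2}\le\|h\|_{H^s}$,
\[
\abs{E_\lambda}\le \abs{\{h^*>\lambda/2\}}+\abs{\{\abs{h}>\lambda/2\}}\lesssim \lambda^{-2}\|h\|_{H^s}^2\lesssim \lambda^{-2}\delta^2 .
\]
Letting $\delta\to0$ gives $\abs{E_\lambda}=0$, and taking the union over $\lambda=1/k$ shows that convergence holds almost everywhere.

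There is no serious obstacle, since the content lies in \Cref{max stri for torus}. The points needing care are all in Step 1: checking that $f^*$ is measurable (the supremum may be taken over rational $t$ by continuity for each frequency-localized piece, and one then passes to the limit in the sum), and noting that $\lim_{t\to0}$ only involves $0<t\le1$. Negative times follow by symmetry, via complex conjugation.
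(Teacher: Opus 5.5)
Your proposal is correct. The paper quotes this result from \cite{CompaanLuca2021pointwise} without proof, but your argument is exactly the mechanism it uses for the waveguide analogue \Cref{genind}: dyadic summation as in \Cref{ulc}, then the density--Chebyshev argument of \Cref{application of maximal strichartz}, transplanted to $\T^d$. Your Cauchy--Schwarz summation and trigonometric-polynomial dense class are cosmetic variations. The one detail to tidy is that \Cref{max stri for torus} as stated has $\|f\|_{L^2}$ on the right, so apply it to a fattened projection $\tilde P_N f$ with $P_N\tilde P_N=P_N$; this gives $\|\tilde P_N f\|_{L^2}$, which sums against $N^{2s}$ equally well.
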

\begin{remark}\label{Necessary condition for torus}
	\begin{enumerate}
		\item If 
		$s<\frac{d}{2(d+1)}$ (see in red region in \Cref{fig:s_range}),
		then pointwise convergence fails, while the endpoint case
		$s=\frac{d}{2(d+1)}$ remains open. For the proof of this see \cite[Theorem 1.1]{EceizabarrenaRenato2022fractalsperiodic}.
		\item  For special initial data $f$ such that
		\[
		f(x) := \sum_{|k|\sim N} e^{ik\cdot x}, \qquad N \in \mathbb{N},
		\]
		the Schr\"odinger flow can be regarded as a type of Weyl sum. Barron \cite{Barron2022L4maximal} investigated the $L^4$ maximal estimate for the Weyl sum on $\mathbb{T}$ based on the Weyl-sum estimates and the circle method. For the higher dimensional torus $\mathbb{T}^d$, $d \ge 2$, Miao--Yuan--Zhao \cite{MiaoYuanZhao2023maximalestimate} proved the convergence for this special initial data for $s > \frac{d}{2(d+1)}$ (see blue region in \Cref{fig:s_range}) which is sharp up to the endpoint. 
	\end{enumerate}
\end{remark}

\begin{figure}[h]
	\centering
	
	\begin{tikzpicture}[
		every node/.style={font=\large},
		dot/.style={circle, fill=black, inner sep=1.5pt}
		]
		\draw[<-, red, thick] (0,0) -- (4,0);
		\node[red, font=\normalsize] at (2,0.6) {negative result};
		\draw[-, blue, thick] (4,0) -- (8,0);
		\node[blue, font=\normalsize] at (6,0.6) {special data};
		\draw[->, green!70!black, thick] (8,0) -- (12,0);
		\node[green!70!black, font=\normalsize] at (10,0.6) {positive result};
		
		\node[dot, label=below:{$\frac{d}{2(d+1)}$}] at (4,0) {};
		\node[dot, label=below:{$\frac{d}{d+2}$}] at (8,0) {};
	\end{tikzpicture}
	\caption{The range of $s$ is divided into three regions by these values 
		$\frac{d}{2(d+1)}$ and $\frac{d}{d+2}$. The red segment is where convergence fails (\Cref{Necessary condition for torus}). The blue segment including $\frac{d}{2(d+1)}$ is where convergence is known and sharp only for the special Weyl-sum data of Miao--Yuan--Zhao. The green segment is where convergence holds for general $H^s$ data (\Cref{ptonT}).}
	\label{fig:s_range}
	
\end{figure}

\subsection{Schr\"odinger equation with infinitely many particles} In recent years, there has been ongoing interest to study Schr\"odinger equation with infinitely many particles, i.e. infinite system of \eqref{NLS} with infinite sequence of orthonormal initial data. Specifically, we wish to consider infinite system of \eqref{NLS}, that is:
\begin{equation}\label{InSNLS}
	\begin{cases}
		i\partial_tu_j +\Delta u_j = 0\\
		u_j(t=0)=f_j
	\end{cases}\quad  j \in \mathbb N,  
\end{equation}
where $(f_j)$ are orthonormal in $H^s(\M)$. 
Indeed, it is well-known that system  \eqref{InSNLS} and its nonlinear analogue   models infinitely many fermions. See \cite{chen2017global,chen2018scattering, sabin2014hartree,FrankJEMS2014,BenjaminCMP2014}.  In order to motivate our main results, we first briefly describe background.  Consider $M$ coupled of Hartree equations that describe the dynamics of $M$ fermions interacting via a potential $\omega: \mathcal{M} \to \mathbb R$ as follows:
\begin{align}\label{HN}
	\left\{\begin{array}{rlrl}
		i \partial_t u_1 & =\left(-\Delta+\omega * \rho\right) u_1, & & \left.u_1\right|_{t=0}=f_1, \\
		& \vdots \\
		i \partial_t u_M & =\left(-\Delta+\omega * \rho\right) u_M, & & \left.u_M\right|_{t=0}=f_M,
	\end{array}\right.
\end{align}
where $(t,z) \in \mathbb{R} \times \mathcal{M},$ and to account for the Pauli principle, the family $\left\{f_j\right\}_{j=1}^M$ is assumed to be an orthonormal system in $L^2\left(\mathcal{M} \right)$. Here, $\rho$ is the total density function of particles defined by 
$$\rho(t, z)=\sum_{j=1}^M\left|u_j(t, z)\right|^2.$$ 
Let us introduce the density matrix corresponding to \eqref{HN} as
\begin{eqnarray*}
	\gamma_M (t) = \sum_{j=1}^M | u_j(t)\rangle \langle u_j(t) |,
\end{eqnarray*}
where $|u \rangle \langle v|$ denotes the operator $f\mapsto \langle v, f\rangle u$ from $L^2\left(\mathcal{M} \right)$ into itself. Then system \eqref{HN} can be rewritten as an operator-valued equation
\begin{equation}\label{oHN}
	\begin{cases}
		i \partial_t\gamma_{M} = [-\Delta+ \omega \ast \rho_ {\gamma_M}, \gamma_M]\\
		\gamma_M(t=0) = \sum_{j=1}^M | f_{j}\rangle \langle f_{j} |
	\end{cases}.
\end{equation}
Here, $[A, B]=AB-BA$ is the operator commutator
and the density function $\rho_ {\gamma_M}$ is given by 
\[\rho_{\gamma_M} (t,z)= \gamma_{M} (t, z,z), \] 
where $\gamma_M (t, \cdot, \cdot)$ is the integral kernel of operator $\gamma_M(t)$.
We are going to consider the case when $M \rightarrow \infty$. In this case, we naturally arrive at the following operator-valued equivalent formulation 
\begin{align}\label{Hartree omega} 
	\left\{\begin{array}{l}
		i \partial_t \gamma=\left[-\Delta+\omega * \rho_\gamma, \gamma\right] \\
		\left.\gamma\right|_{t=0}=\gamma_0
	\end{array}.\right.
\end{align}
Here, the unknown $\gamma=\gamma(t)$ is a bounded self-adjoint operator on $L^2(\mathcal{M})$ representing the one-particle density matrix and $\rho_\gamma:\mathbb R \times \mathcal{M} \to \mathbb R$ is the scalar \textbf{density function} associated to $\gamma(t),$ formally defined by
\begin{equation}\label{dfcp}
	\rho_\gamma(t, z)=\gamma (t, z,z),
\end{equation}
where $\gamma (t, z, z')$ denotes (with the abuse of notation) the integral kernel of the operator $\gamma(t),$ i.e.,
\[
\gamma (t) \phi (z) = \int_{\mathcal{M}} \gamma (t, z, z') \phi (z') dz'.
\]  
It is known that  \cref{Hartree omega} describes the mean-field dynamics of an interacting gas containing infinitely many fermions on $\mathcal{M}.$ In the Euclidean space, this setting was studied by Frank--Sabin \cite{frank2017restriction}, who developed orthonormal Strichartz estimates and applied them to the well-posedness of the Hartree equation in the Schatten  space. See also \cite{FengMathZ}.  This was followed by the work of Chen--Hong--Pavlović \cite{chen2017global,chen2018scattering} on the global well-posedness and scattering of infinitely many fermions. See also further development by Hadama et al. in  \cite{HadamaJFA,HadamaJMP}. On the torus, the corresponding problem was studied by Nakamura \cite{nakamura2020orthonormal}, who established orthonormal Strichartz estimates and applied them to the local well-posedness of the periodic Hartree equation for infinitely many particles. While for the partial regularity refinement of this, see \cite{bhimani2026refined}. More recently, on the waveguide manifold $\mathbb{R}^n\times\mathbb{T}^m$, this problem was studied by Bhimani--Choudhary \cite{bhimani2025orthonormal}.

We are now ready to come back to  view our original problem for fermionic system, that is, 
our goal is to investigate the convergence of the solution of system \eqref{Hartree omega} when $\omega=0$ i.e., 
\begin{align}\label{Hartree} 
	\left\{\begin{array}{l}
		i \partial_t \gamma=\left[-\Delta, \gamma\right] \\
		\left.\gamma\right|_{t=0}=\gamma_0
	\end{array}.\right.
\end{align}
We may regard \eqref{Hartree} as equivalent to \eqref{InSNLS}, see e.g. \cite{sabin2014hartree}.  Since this is an operator-valued equation, the natural framework to study this problem is to consider initial data $\gamma_0$ from the Schatten space; and in order to make sense of point wise convergence, we consider corresponding density functions (see \eqref{dfcp} and below \eqref{e:carleson}). 

For $s\in\mathbb{R}$ and $1\le \alpha\le\infty$, we work with the Sobolev-Schatten space $\Sp^{\alpha,s}(L^2(\M))$, which is obtained by introducing Sobolev regularity into the classical Schatten spaces. More precisely,
\[
\Sp^{\alpha,s}(L^2(\M))
:=
\left\{
A:   A \ \text{is compact and } \|\langle\nabla\rangle^sA\langle\nabla\rangle^s\|_{\Sp^\alpha (L^2(\M))}<\infty
\right\},
\]
where $\Sp^\alpha$ denotes the classical Schatten space (\Cref{schp}). Here  $\langle \nabla \rangle^s$ is the inhomogeneous derivative and $\langle \nabla \rangle^s \phi = \left((1 + |\eta|^2)^{\frac{s}{2}} \hat{\phi}\right)^\vee$. Whenever the underlying Hilbert space $\cH$ is clear from the context, we simply write $\Sp^{\alpha,s}$ instead of $\Sp^{\alpha,s}(\cH)$. We note that Schatten spaces enjoy monotonicity property, specifically, we have
\[
\Sp^{\beta_1,s}\subseteq\Sp^{\beta_2,s},
\qquad
\beta_1\le\beta_2.
\]
For the precise definition and basic properties of Sobolev-Schatten spaces, we refer the reader to \Cref{schp,sec7}. Taking these considerations into account, we now formulate the \textit{convergence problem for infinite system  \eqref{Hartree}} as follows:
\begin{itemize}
	\item[--] what is a   possibly large class  of initial data $\gamma_0 \in \Sp^{\alpha}(\cH)$ (i.e. large $\alpha$) and the smallest value of $s>0$ so that  
	pointwise convergence of the associated density functions  (of the solution $\gamma(t)$ of \eqref{Hartree})
	\begin{equation}\label{e:carleson-density}
		\lim_{t\to0} \rho_{\gamma(t)}(z) = \rho_{\gamma_0}(z)\quad { for \ \rm a.e.}\;\;\; z\in\M
	\end{equation} 
	holds true.
\end{itemize}

In order to understand convergence problem \eqref{e:carleson-density} on torus $\mathbb T^d$, we first establish following theorem. 
\begin{theorem}\label{t:maximal ose T}
	Assume that $s>\frac{d}{4}+\frac{d}{2(d+2)}, d \geq1$ and   $1 \leq \alpha'\le 2$.
	Then  the following maximal-in-time bound
	\begin{equation}\label{e:max ose T }
		\bigg\| \sum_{j=1}^{\infty} \lambda_j \, |e^{it\Delta} f_j|^2 \bigg\|_{L_x^2 L_t^\infty(\mathbb T^{d+1})}
		\lesssim \, \|\lambda\|_{\ell^{\alpha'}}
	\end{equation}
	holds true
	for any orthonormal system $(f_j)_j$ in $H^s(\mathbb T^d)$ and any sequence of coefficients
	$\lambda=(\lambda_j)_j\in \ell^{\alpha'}$.
\end{theorem}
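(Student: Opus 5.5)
The plan is to prove the $\alpha'=2$ case and deduce the range $1\le\alpha'\le 2$ from $\|\lambda\|_{\ell^2}\le\|\lambda\|_{\ell^{\alpha'}}$. I would first reduce to single dyadic frequency pieces and then prove each piece by a Hilbert--Schmidt duality argument fed by \Cref{max stri for torus}.

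\textbf{Step 1: reduction to $L^2$ data and dyadic pieces.} Write $f_j=\langle\nabla\rangle^{-s}g_j$, so that $(g_j)$ is orthonormal in $L^2(\mathbb T^d)$. Set $u_{j,N}=e^{it\Delta}P_N\langle\nabla\rangle^{-s}g_j$. Since $e^{it\Delta}$ is $1$-periodic in $t$, the $L^\infty_t$ norm over $\mathbb T$ is a supremum over $0<t\le 1$. Fix a small $\delta>0$. By Cauchy--Schwarz in the dyadic sum,
\[
|e^{it\Delta}f_j|^2\le \Big(\sum_N N^{-\delta}\Big)\sum_N N^{\delta}|u_{j,N}|^2 .
\]
Hence, by the triangle inequality,
\[
\Big\|\sum_j\lambda_j|e^{it\Delta}f_j|^2\Big\|_{L^2_xL^\infty_t}\lesssim_\delta\sum_N N^{\delta}\Big\|\sum_j|\lambda_j||u_{j,N}|^2\Big\|_{L^2_xL^\infty_t}.
\]
It therefore suffices to prove, for each dyadic $N$,
\[
\Big\|\sum_j|\lambda_j|\,|u_{j,N}|^2\Big\|_{L^2_xL^\infty_t}\lesssim_\varepsilon N^{\frac d2+\frac{d}{d+2}-2s+\varepsilon}\|\lambda\|_{\ell^2}.
\]
The assumption $s>\frac d4+\frac{d}{2(d+2)}$ makes the exponent $\frac d2+\frac{d}{d+2}-2s+\varepsilon+\delta$ negative once $\varepsilon,\delta$ are small, so the sum over $N$ converges. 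The piece $N=1$ is trivial.

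\textbf{Step 2: duality.} Let $T_N g=e^{it\Delta}P_N\langle\nabla\rangle^{-s}g$. By the standard Frank--Sabin duality principle, the fixed-$N$ estimate with $\ell^2$ coefficients is equivalent to the Hilbert--Schmidt bound
\[
\|W\,T_NT_N^*\,\overline W\|_{\Sp^2(L^2(\mathbb T^{d+1}))}\lesssim N^{\frac d2+\frac d{d+2}-2s+\varepsilon}\,\|W\|_{L^4_xL^2_t}^2 .
\]
The kernel of $T_NT_N^*$ is $K_N(x-x',t-t')$, where
\[
K_N(x,t)=e^{it\Delta}\varphi_N(x),\qquad \varphi_N=\sum_{k}\psi_N(k)^2\langle k\rangle^{-2s}e^{ik\cdot x},
\]
with $\psi_N$ the Littlewood--Paley multiplier. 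The $\Sp^2$ norm is then computed exactly:
\[
\|W T_NT_N^*\overline W\|_{\Sp^2}^2=\iint |W(x,t)|^2\,|K_N(x-x',t-t')|^2\,|W(x',t')|^2 .
\]
Bounding $|K_N(x-x',t-t')|^2\le \sup_\tau|K_N(x-x',\tau)|^2$ and applying Hölder and Young in $x$ gives
\[
\|W T_NT_N^*\overline W\|_{\Sp^2}^2\le \big\|\sup_\tau|K_N(\cdot,\tau)|\big\|_{L^2_x}^2\,\big\||W|^2\big\|_{L^2_xL^1_t}^2 .
\]
The last factor equals $\|W\|_{L^4_xL^2_t}^4$.

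\textbf{Step 3: the main estimate.} The key input, and the only nontrivial one, is \Cref{max stri for torus} applied to the single function $\varphi_N$. This gives
\[
\big\|\sup_{0<\tau\le1}|e^{i\tau\Delta}\varphi_N|\big\|_{L^2}\lesssim_\varepsilon N^{\frac d{d+2}+\varepsilon}\|\varphi_N\|_{L^2}\lesssim N^{\frac d{d+2}+\frac d2-2s+\varepsilon}.
\]
This is exactly the exponent required in Step 2, which closes the estimate for each fixed $N$.

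The main obstacle is arranging this exponent bookkeeping. A naive approach estimates each $|u_{j,N}|^2$ separately with the $L^4$ maximal bound and only yields $\ell^1$ coefficients. The $\ell^2$ improvement comes from the fact that the Hilbert--Schmidt norm of the kernel sees the maximal function of one explicit function. The remaining points to check are routine:
\begin{itemize}
\item the duality statement is valid in the mixed norm $L^2_xL^\infty_t$, with dual space $L^2_xL^1_t$ and $W\in L^4_xL^2_t$;
\item the time domain is the periodic one;
\item the Cauchy--Schwarz loss $N^\delta$ in Step 1 is absorbed by the strict inequality on $s$.
\end{itemize}
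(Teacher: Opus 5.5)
Your proposal is correct and follows essentially the paper's route: reduce to $\alpha'=2$, decompose dyadically, and use the duality principle (\Cref{PL1}) to turn each piece into a Hilbert--Schmidt bound, computed from the kernel $W_1(x,t)K_N(x-x',t-t')W_2(x',t')$ by taking the supremum in $t$ and Young's inequality in $x$. You then bound $\|K_N\|_{L^2_xL^\infty_t}$ by \Cref{max stri for torus}, where one should write $K_N=e^{it\Delta}P_N g_N$ with $\widehat{g_N}(k)=\psi_N(k)\langle k\rangle^{-2s}$ and $\|g_N\|_{L^2}\lesssim N^{d/2-2s}$, since the theorem is stated for $P_N$ of an $L^2$ function. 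The only real difference is in summing the dyadic pieces: you use a weighted Cauchy--Schwarz costing $N^{\delta}$, whereas the paper uses the lossless triangle inequality for $\|\sum_j\lambda_j|u_j|^2\|^{1/2}$. Both close under the strict inequality on $s$.
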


\Cref{t:maximal ose T} should be regarded as an orthonormal extension of classical result of  Compaan, Luc\`a, and Staffilani (\Cref{max stri for torus}). Indeed, by taking $\lambda_1=1$ and $\lambda_j=0$ for $j>1$, we can recover \Cref{max stri for torus}. However, we need to pay price of taking larger $s$ in \Cref{t:maximal ose T} as compare to regularity index $s$ in \Cref{max stri for torus}. As a consequence of \Cref{t:maximal ose T}, we have the following. 

\begin{corollary}[pointwise convergence on $\mathbb T^d$]\label{c:pointwise T}
	Assume that $\frac{d}{4}+\frac{d}{2(d+2)}<s<\frac{d}{2}, d \geq1$ and $\gamma_0 \in \Sp^{\alpha',s}$ is self-adjoint with $\alpha' < 2$. Then the densities
	$\rho_{\gamma(t)}(x)$ and $\rho_{\gamma_0}(x)$ are well defined, and they satisfy
	\eqref{e:carleson-density}.
\end{corollary}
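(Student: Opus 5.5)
We follow the standard route from maximal estimates to almost-everywhere convergence, with \Cref{t:maximal ose T} as the key input. The plan has three steps: make sense of the densities, approximate $\gamma_0$ by finite-rank smooth operators, and control the error with the maximal estimate.

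\textbf{Well-definedness of the densities.} Since $\gamma_0$ is self-adjoint and $\langle\nabla\rangle^s\gamma_0\langle\nabla\rangle^s\in\Sp^{\alpha'}$, the spectral theorem gives a decomposition
\[
\gamma_0=\sum_j\lambda_j\,|f_j\rangle\langle f_j|,
\]
where $(f_j)$ is orthonormal in $H^s(\T^d)$ and $\lambda=(\lambda_j)\in\ell^{\alpha'}$ with $\|\lambda\|_{\ell^{\alpha'}}=\|\gamma_0\|_{\Sp^{\alpha',s}}$. Concretely, take $f_j=\langle\nabla\rangle^{-s}g_j$, where the $g_j$ are eigenfunctions of $\langle\nabla\rangle^s\gamma_0\langle\nabla\rangle^s$. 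Then $\gamma(t)=e^{it\Delta}\gamma_0e^{-it\Delta}$, and formally
\[
\rho_{\gamma(t)}=\sum_j\lambda_j\,|e^{it\Delta}f_j|^2 .
\]
\Cref{t:maximal ose T} applies with absolute values $|\lambda_j|$ (apply it to the positive and negative parts separately). It shows that $\sum_j|\lambda_j|\,\sup_t|e^{it\Delta}f_j|^2\in L^2_x$. Hence the series converges absolutely for a.e.\ $x$, uniformly in $t\in[0,1]$, which defines $\rho_{\gamma(t)}$ and $\rho_{\gamma_0}$ almost everywhere. The condition $s<d/2$ is only the natural range; higher $s$ only helps. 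Identifying this series with the kernel diagonal $\gamma(t,x,x)$ is a routine check for finite-rank truncations, followed by a limit.

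\textbf{Approximation.} Fix $\eps>0$. Choose $K$ so that $\|(\lambda_j)_{j>K}\|_{\ell^{\alpha'}}<\eps$. Then replace each $f_j$, $j\le K$, by a trigonometric polynomial $h_j$ close to it in $H^s$. For a trigonometric polynomial, $e^{it\Delta}h_j\to h_j$ uniformly as $t\to0$, so the finite sum $\sum_{j\le K}\lambda_j|e^{it\Delta}h_j|^2$ converges everywhere. Note that the $h_j$ are no longer orthonormal. Orthonormality is not needed for finitely many terms: we bound
\[
\big||e^{it\Delta}f_j|^2-|e^{it\Delta}h_j|^2\big|\le|e^{it\Delta}(f_j-h_j)|\,\big(|e^{it\Delta}f_j|+|e^{it\Delta}h_j|\big)
\]
and use Cauchy–Schwarz together with the single-function case of \Cref{t:maximal ose T}. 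That case is the bound $\|\sup_t|e^{it\Delta}g|\|_{L^4_x}^2\lesssim\|g\|_{H^s}^2$, obtained by normalising $g$ and taking $\lambda=\delta_1$. This gives a sup-in-$t$ error of size $O(\eps)$ in $L^2_x$. The tail $j>K$ is controlled directly by \Cref{t:maximal ose T} applied to the orthonormal family $(f_j)_{j>K}$, with error $\lesssim\eps$.

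\textbf{Conclusion.} Combining the pieces,
\[
\Big\|\limsup_{t\to0}|\rho_{\gamma(t)}-\rho_{\gamma_0}|\Big\|_{L^2_x}\lesssim\eps
\]
for every $\eps>0$. Hence the $\limsup$ vanishes almost everywhere, which is \eqref{e:carleson-density}.

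The main obstacle is the first step. One has to justify rigorously that the diagonal of the kernel of $\gamma(t)$ equals $\sum_j\lambda_j|e^{it\Delta}f_j|^2$ when $\gamma_0$ has infinite rank and is given only in $\Sp^{\alpha',s}$ with $\alpha'$ possibly larger than $1$, so that $\gamma_0$ need not be trace class. I would handle this by defining $\rho$ as the $L^2_x$-limit of the densities of finite-rank truncations, using the maximal bound as the Cauchy criterion. The restriction $\alpha'<2$ (rather than $\le2$) gives some slack in this limiting argument.
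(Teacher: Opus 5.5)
Your argument is correct in outline and has the same skeleton as the paper's proof in \Cref{sec7}: truncate $\gamma_0$ to finite rank, control the tail uniformly in $t$ by \Cref{t:maximal ose T}, and use convergence of the finite-rank part. The auxiliary steps differ. The paper defines $\rho_{\gamma_0}$ and $\rho_{\gamma(t)}$ as $L^2(X)$-limits, and controls the $t=0$ tail (its term $M_3$) by the Lieb--Sobolev inequality \Cref{lem:lieb-sobolev-product} together with $\ell^{\alpha'}\subseteq\ell^{2,1}$. That is exactly where $s<\frac d2$ and $\alpha'<2$ enter. For the finite-rank part the paper simply cites \Cref{ptonT}. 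You route everything through the maximal estimate, and you replace that citation by trigonometric-polynomial approximation plus the single-function bound $\|\sup_t|e^{it\Delta}g|\|_{L^4_x}^2\lesssim\|g\|_{H^s}^2$. This is a few lines longer but self-contained, and it proves slightly more. Nothing in your route uses $s<\frac d2$, and since tails of an $\ell^2$ sequence tend to zero it also covers $\alpha'=2$. So the ``slack'' you attribute to $\alpha'<2$ is not needed.

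Two steps need repair.

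First, \Cref{t:maximal ose T} bounds $\bigl\|\sup_t\sum_j|\lambda_j|\,|e^{it\Delta}f_j|^2\bigr\|_{L^2_x}$, with the supremum outside the sum. It does not give $\sum_j|\lambda_j|\sup_t|e^{it\Delta}f_j|^2\in L^2_x$. That quantity lets each $j$ choose its own time, so it discards the orthogonality the estimate exploits; it is a stronger statement that does not follow. Your conclusion is still true. Set $G_K:=\sup_t\sum_{j>K}|\lambda_j|\,|e^{it\Delta}f_j|^2$. It is pointwise nonincreasing in $K$, and applying the theorem to the orthonormal family $(f_j)_{j>K}$ gives $\|G_K\|_{L^2_x}\lesssim\|(\lambda_j)_{j>K}\|_{\ell^{\alpha'}}\to0$. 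Hence $G_K\to0$ a.e., which is precisely absolute convergence for a.e.\ $x$, uniformly in $t$.

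Second, $L^\infty_t$ is an essential supremum, so the $t=0$ slice $\sum_{j>K}\lambda_j|f_j|^2$ is not automatically dominated by the maximal function. You need this slice both to define $\rho_{\gamma_0}$ and in your final estimate. What is required is a fixed-time bound $\|\sum_j\lambda_j|f_j|^2\|_{L^2_x}\lesssim\|\lambda\|_{\ell^2}$, which is the role Lieb--Sobolev plays in the paper. In your setting you can get it in either of two ways:
\begin{itemize}
\item from the maximal bound plus $L^2_x$-continuity in $t$ of finite sums, using $H^s\subset L^4$ for $s\ge\frac d4$;
\item directly from a Hilbert--Schmidt computation, since $\sum_{n\in\Z^d}\langle n\rangle^{-4s}<\infty$ for $s>\frac d4$.
\end{itemize}
With these two adjustments your proof goes through.
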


Next, we shall turn our attention to convergence problem \eqref{e:carleson-density} on waveguide manifold $\mathbb R^n \times \mathbb T^m.$

\begin{theorem}\label{t:maximal ose wave}
	Let $n,m\geq1$ and $s>\frac{n+m}{4}+\frac{n+m}{2(n+m+2)}$. Then we have the maximal-in-time bound
	\begin{equation}\label{e:max ose wave}
		\bigg\| \sum_j \lambda_j \, |e^{it\Delta} f_j|^2 \bigg\|_{L_x^2 L_t^\infty(B^n(0,1) \times \T^m \times [0,1])}
		\le C \, \|\lambda\|_{\ell^{\alpha'}}
	\end{equation}
	for any orthonormal system $(f_j)_j$ in $H^s(\R^n \times \T^m)$ and any sequence of coefficients
	$\lambda=(\lambda_j)_j\in \ell^{\alpha'}$, provided that $1 \le \alpha'\le 2$.
\end{theorem}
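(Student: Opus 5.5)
We reduce the orthonormal maximal estimate to the single-particle estimate of \Cref{max stri for wave}, combined with a Littlewood--Paley decomposition and Sobolev-type control in the orthonormal setting. Set $d=n+m$ and $g_j=\langle\nabla\rangle^s f_j$, so that $(g_j)$ is orthonormal in $L^2(\R^n\times\T^m)$. Write $\gamma=\sum_j\lambda_j|g_j\rangle\langle g_j|$; then $\|\gamma\|_{\Sp^{\alpha'}}\le\|\lambda\|_{\ell^{\alpha'}}$. The left side of \eqref{e:max ose wave} is $\|\rho_{e^{it\Delta}\langle\nabla\rangle^{-s}\gamma\langle\nabla\rangle^{-s}e^{-it\Delta}}\|_{L^2_{x,y}L^\infty_t}$. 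Since $\alpha'\le 2$ gives $\|\lambda\|_{\ell^2}\le\|\lambda\|_{\ell^{\alpha'}}$, it suffices to treat $\alpha'=2$, i.e.\ the Hilbert--Schmidt case.

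\textbf{Step 1: dyadic decomposition.} Decompose $e^{it\Delta}f_j=\sum_N e^{it\Delta}P_N f_j$ and expand $|\sum_N\cdot|^2$. By Cauchy--Schwarz in $N$ with weights $N^{\pm\delta}$,
\[
\sum_j\lambda_j|e^{it\Delta}f_j|^2\le C_\delta\sum_N N^{2\delta}\sum_j|\lambda_j||e^{it\Delta}P_Nf_j|^2,
\]
so it is enough to prove, for each dyadic $N$ and a tiny $\delta>0$,
\[
\Big\|\sum_j|\lambda_j||e^{it\Delta}P_N f_j|^2\Big\|_{L^2_{x,y}L^\infty_t(B^n(0,1)\times\T^m\times[0,1])}\lesssim N^{\frac d2+\frac d{d+2}-2s+\eps}\|\lambda\|_{\ell^2}.
\]
The exponent is negative under the hypothesis on $s$, so the sum over $N$ converges.

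\textbf{Step 2: frequency-localized bound.} Set $h_j=P_Nf_j$, so that $\|h_j\|_{L^2}\lesssim N^{-s}$ (the $h_j$ are no longer orthonormal). By the triangle inequality in $L^2L^\infty$,
\[
\Big\|\sum_j|\lambda_j||e^{it\Delta}h_j|^2\Big\|\le\sum_j|\lambda_j|\,\big\|\sup_t|e^{it\Delta}h_j|\big\|_{L^4}^2 .
\]
This gives only $\ell^1$ control and would cost all of orthogonality, so instead I would use duality. Writing the quantity as $\sup_{\|V\|_{L^2_{x,y}L^1_t}\le1}\Tr(\gamma_N\, T^*VT)$, Hölder in Schatten classes reduces matters to bounding $\|P_N\langle\nabla\rangle^{-s}T^*VT\langle\nabla\rangle^{-s}P_N\|_{\Sp^2}$, a Hilbert--Schmidt norm. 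Its kernel is explicit: $\int V(z,t)\,K_N(z'-z,t)\overline{K_N(z''-z,t)}\,dz\,dt$, with $K_N$ the frequency-localized propagator kernel. Squaring and integrating in $z',z''$ yields
\[
\iint V(z,t)\overline{V(w,\tau)}\,\big|K_N^{(2)}(z-w,t-\tau)\big|^2 ,
\]
which requires pointwise control of the frequency-localized kernel at scale $N$. Alternatively, and more simply: the Hilbert--Schmidt norm squared is at most $N^{d}\cdot N^{-4s}$ times the square of a single-function maximal operator norm. Concretely, one bounds $\|\sum_j\lambda_j|Th_j|^2\|_{L^2L^\infty}$ using $|\sum_j\lambda_j|Th_j|^2|\le(\sum_j|\lambda_j|^2)^{1/2}(\sum_j|Th_j|^4)^{1/2}$, then applies \Cref{max stri for wave} (together with Bernstein, $\|P_N\cdot\|_{L^\infty}\lesssim N^{d/2}\|\cdot\|_{L^2}$, to pass from $L^4$ to $L^2$ in space). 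Bessel's inequality enters here: $\sum_j|\langle \phi,h_j\rangle|^2\le N^{-2s}\|\phi\|^2$, with $\phi$ the localized kernel of $L^2$ norm $N^{d/2}$. This produces the $N^{d/2}$ loss plus the $N^{2\cdot\frac d{2(d+2)}}$ from the maximal estimate, matching the threshold $\frac d4+\frac d{2(d+2)}$.

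\textbf{Main obstacle.} The main difficulty is Step 2: combining orthogonality (via Bessel's inequality, which costs $N^{d/2}$) with the maximal estimate of \Cref{max stri for wave} without double-counting losses. The $L^\infty_t$ norm prevents a direct $TT^*$ argument. My first attempt would be to bound pointwise in $(z,t)$
\[
\sum_j|\lambda_j||e^{it\Delta}h_j(z)|^2\le\|\lambda\|_{\ell^2}\Big(\sum_j|e^{it\Delta}h_j(z)|^4\Big)^{1/2},
\]
then use $\sum_j|e^{it\Delta}h_j(z)|^2\lesssim N^{d-2s}$ (Bessel) to get $\sum_j|\cdot|^4\le N^{d-2s}\sup_j|e^{it\Delta}h_j(z)|^2$. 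After that, I would need to control $\sup_t\sup_j$ in $L^2$ via \Cref{max stri for wave}, absorbing the $\sup_j$ by a square-function bound $\sup_j\le(\sum_j)^{1/2}$ and interchanging with the sup over $t$ carefully.
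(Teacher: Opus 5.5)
\textbf{Gap in Step 2.} Your reduction to $\alpha'=2$ and the dyadic splitting with weights $N^{\pm\delta}$ are fine. The per-frequency target $N^{\frac d2+\frac d{d+2}-2s+\eps}\|\lambda\|_{\ell^2}$ is the right one; it is \Cref{prop:strichartz for each PN wave}. The problem is the route you commit to in Step 2. That route is the pointwise Cauchy--Schwarz bound $\sum_j|\lambda_j||u_j|^2\le\|\lambda\|_{\ell^2}(\sum_j|u_j|^4)^{1/2}$ with $u_j=e^{it\Delta}P_Nf_j$, followed by Bessel and $\sup_j|u_j|\le(\sum_j|u_j|^2)^{1/2}$, and it cannot produce that exponent.

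Carried out literally, it gives only $(\sum_j|u_j|^4)^{1/2}\le\sum_j|u_j|^2\lesssim N^{d-2s}$. This is the trivial bound: it holds even with $\|\lambda\|_{\ell^\infty}$ and never uses \Cref{max stri for wave}. It closes the dyadic sum only for $s>\frac d2$, so it misses the whole range $\frac d4+\frac d{2(d+2)}<s\le\frac d2$.

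Later bookkeeping cannot repair this, because the loss occurs at the Cauchy--Schwarz step itself. Pointwise, $\sup_{\|\lambda\|_{\ell^2}\le1}\sum_j\lambda_j|u_j|^2=(\sum_j|u_j|^4)^{1/2}$, so you are effectively letting $\lambda$ depend on $(z,t)$. Here is a concrete example:
\begin{itemize}
\item Let $(g_j)$ be $\sim N^d$ normalized translates of the kernel of $\langle\nabla\rangle^{-s}P_N$, centred on a grid of spacing $C/N$ in $B^n(0,1)\times\T^m$. For $C$ large these are essentially orthogonal, and orthonormalizing them changes nothing.
\item Then $|u_j(z,0)|\gtrsim N^{\frac d2-s}$ within $c/N$ of the $j$-th centre.
\item Hence $(\sum_j|u_j|^4)^{1/2}\gtrsim N^{d-2s}$ on a fixed fraction of the set, already at $t=0$.
\item By contrast, with $\lambda_j=N^{-d/2}$ the actual density at $t=0$ is only of size $N^{\frac d2-2s}$.
\end{itemize}
Likewise, applying \Cref{max stri for wave} to each $h_j$ separately cannot control $\sup_j$ over infinitely many $j$.

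\textbf{The fix.} The missing idea is to keep orthogonality at the operator level, and you were one line away from it in the duality computation you abandoned. The identity
$\|P_N\langle\nabla\rangle^{-s}T^*VT\langle\nabla\rangle^{-s}P_N\|_{\Sp^2}^2=\iint V(z,t)\overline{V(w,\tau)}\,|K_N^{(2)}(z-w,t-\tau)|^2$
needs no pointwise control of the kernel. Bound $|K_N^{(2)}(z-w,t-\tau)|\le\sup_{|t'|<1}|K_N^{(2)}(z-w,t')|$, integrate out $t$ and $\tau$, and apply Young's inequality in space. This gives
\[
\big\|P_N\langle\nabla\rangle^{-s}T^*VT\langle\nabla\rangle^{-s}P_N\big\|_{\Sp^2}^2\le\|V\|_{L^2_zL^1_t}^2\,\|K_N^{(2)}\|^2_{L^2_zL^\infty_t(B^n(0,2)\times\T^m\times(-1,1))}.
\]
Now $K_N^{(2)}(\cdot,t)=e^{it\Delta}G_N$ with $\widehat{G_N}=\langle\xi\rangle^{-2s}\psi_N^2$ and $\|G_N\|_{L^2}\sim N^{\frac d2-2s}$. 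Cover $B^n(0,2)$ by finitely many unit balls and shift $t\in(-1,0]$ to $(0,1]$; both are Fourier-side modulations and harmless for \Cref{max stri for wave}. That estimate then gives $\|K_N^{(2)}\|_{L^2_zL^\infty_t}\lesssim_\eps N^{\frac d{d+2}+\eps}N^{\frac d2-2s}$, which is the per-frequency bound.

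This is exactly the paper's proof, written there in the equivalent form $\|W_1\mathcal E_N\mathcal E_N^*W_2\|_{\Sp^2}\le\|W_1\|_{L^4_xL^2_t}\|K_N\|_{L^2_xL^\infty_t}\|W_2\|_{L^4_xL^2_t}$ via \Cref{PL1}. The factor $N^{\frac d2}$ comes from the $L^2$ norm of the kernel's multiplier. The factor $N^{\frac d{d+2}}$ comes from applying \Cref{max stri for wave} to the kernel itself, not to the individual $h_j$.
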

As a consequence of \Cref{t:maximal ose wave}, we have the following.
\begin{corollary}[pointwise convergence on $\mathbb R^n \times \mathbb T^m$]\label{c:pointwise wave}
	Assume that $\frac{n+m}{4}+\frac{n+m}{2(n+m+2)}<s<\frac{n+m}{2}$ and that $\gamma_0 \in \Sp^{\alpha',s}$ is self-adjoint with $\alpha' < 2$. Then the densities
	$\rho_{\gamma(t)}(x,y)$ and $\rho_{\gamma_0}(x,y)$ are well defined, and they satisfy
	\eqref{e:carleson-density}.
\end{corollary}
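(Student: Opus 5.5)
The corollary follows from \Cref{t:maximal ose wave} by the standard density argument, transplanted to density functions. I would treat three points in turn: the density $\rho_{\gamma_0}$ is well defined, the maximal operator controls it, and convergence holds on a dense class.

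\textbf{Well-definedness.} Since $\gamma_0$ is self-adjoint and compact, write $\langle\nabla\rangle^s\gamma_0\langle\nabla\rangle^s=\sum_j\lambda_j|g_j\rangle\langle g_j|$ with $(g_j)$ orthonormal in $L^2$ and $\lambda\in\ell^{\alpha'}$. Setting $f_j=\langle\nabla\rangle^{-s}g_j$ gives an orthonormal system in $H^s(\R^n\times\T^m)$ with $\gamma_0=\sum_j\lambda_j|f_j\rangle\langle f_j|$. We then define
\[
\rho_{\gamma(t)}=\sum_j\lambda_j|e^{it\Delta}f_j|^2,
\]
using $\gamma(t)=e^{it\Delta}\gamma_0e^{-it\Delta}$.
\begin{itemize}
\item For fixed $t$ the series converges in $L^1_{\mathrm{loc}}$: $s<\frac{n+m}{2}$ does not give $H^s\subset L^\infty$, so instead apply \Cref{t:maximal ose wave} to $|\lambda_j|$ for local $L^2$ control.
\item The estimate is invariant under translation in $x$, so the bound on $B^n(0,1)\times\T^m$ transfers to any unit ball.
\end{itemize}
Hence $\sup_{0<t\le1}\sum_j|\lambda_j||e^{it\Delta}f_j|^2\in L^2_{\mathrm{loc}}$. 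In particular the series converges absolutely a.e., uniformly in $t$, and $\rho_{\gamma(t)}$ and $\rho_{\gamma_0}$ are well defined a.e.

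\textbf{Density and approximation.} Truncate: let $\gamma_0^{(K)}=\sum_{j\le K}\lambda_j|f_j^{K}\rangle\langle f_j^{K}|$, where each $f_j^{K}$ is a Schwartz (frequency-truncated) approximation of $f_j$. For such finite-rank smooth data, $e^{it\Delta}f_j^K\to f_j^K$ uniformly on compact sets as $t\to0$, so $\rho_{\gamma^{(K)}(t)}\to\rho_{\gamma^{(K)}_0}$ everywhere.

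The main obstacle is controlling $\sup_t|\rho_{\gamma(t)}-\rho_{\gamma^{(K)}(t)}|$. The maximal estimate is phrased for orthonormal systems, while the $f_j^K$ are only approximately orthonormal. I would avoid perturbing the $f_j$ and use only the tail truncation $\gamma_0-\gamma_0^{(K)}=\sum_{j>K}\lambda_j|f_j\rangle\langle f_j|$, which is still an orthonormal expansion. \Cref{t:maximal ose wave} then gives
\[
\Big\|\sup_t|\rho_{\gamma(t)}-\rho_{\gamma^{(K)}(t)}|\Big\|_{L^2(B)}\lesssim\|(\lambda_j)_{j>K}\|_{\ell^{\alpha'}}\to0,
\]
which needs $\alpha'<\infty$; this holds since $\alpha'<2$.

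\textbf{Finite rank reduces to the single particle.} For the finite sum $\sum_{j\le K}\lambda_j|e^{it\Delta}f_j|^2$ with $f_j\in H^s$, $s>\frac{n+m}{n+m+2}$, \Cref{genind} applied to each $f_j$ gives $e^{it\Delta}f_j\to f_j$ a.e. Hence the finite sum converges a.e. Note that $s>\frac{n+m}{4}+\frac{n+m}{2(n+m+2)}\ge\frac{n+m}{n+m+2}$ when $n+m\ge2$, which always holds since $n,m\ge1$.

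\textbf{Conclusion.} Combine the two pieces:
\[
\limsup_{t\to0}|\rho_{\gamma(t)}-\rho_{\gamma_0}|\le 2\sup_t|\rho_{\gamma(t)}-\rho_{\gamma^{(K)}(t)}|
\]
a.e. By Chebyshev, the set where the left side exceeds $\eta$ has measure in $B$ at most $\eta^{-2}\|(\lambda_j)_{j>K}\|^2_{\ell^{\alpha'}}\to0$. This proves \eqref{e:carleson-density} a.e. on each unit ball, hence a.e. on $\R^n\times\T^m$.
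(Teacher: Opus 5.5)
Your argument is correct. Its skeleton is the paper's $M_1+M_2+M_3$ argument:
\begin{itemize}
\item the tail is controlled by \Cref{t:maximal ose wave};
\item the finite-rank part converges by applying \Cref{genind} to each $f_j\in H^s$;
\item Chebyshev's inequality finishes on one unit ball, and translation in $x$ covers the rest.
\end{itemize}

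The genuine difference is how you define the densities and control the $t=0$ tail. The paper uses the Lieb--Sobolev inequality (\Cref{lem:lieb-sobolev-product} with $p=2$, proved via Cwikel) together with $\ell^{\alpha'}\subseteq\ell^{2,1}$. This defines $\rho_{\gamma_0}$ and $\rho_{\gamma(t)}$ as $L^2(X)$-limits of finite-rank densities and makes $M_3$ small. It is exactly where the hypotheses $s<\frac{n+m}{2}$ and $\alpha'<2$ enter.

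You instead apply the maximal estimate to $(|\lambda_j|)_j$ and use monotone convergence, so the densities are a.e.\ absolutely convergent series, uniformly in $t$. The uniformity holds because the tail supremum decreases in $K$ and tends to $0$ in $L^2$. This route buys several things:
\begin{itemize}
\item you never need Lieb--Sobolev;
\item the argument works verbatim for $\alpha'=2$ and needs no upper bound on $s$;
\item it fixes a single a.e.-defined representative of $\rho_{\gamma(t)}$ for all $t$ at once. The paper's $L^2$-limit definition determines $\rho_{\gamma(t)}$ only up to a $t$-dependent null set, which is less natural for a statement about $\lim_{t\to0}$.
\end{itemize}
In return, the paper's route gives $\rho_{\gamma_0}\in L^2(X)$ from a fixed-time estimate that is independent of the dynamics.

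You should add one line. A bound on $\sup_{0<t\le1}$ does not by itself control the $t=0$ series $\sum_{j>K}|\lambda_j||f_j|^2$. You need that control both for the well-definedness of $\rho_{\gamma_0}$ and for the factor $2$ in your final inequality. You can get it in either of two ways:
\begin{itemize}
\item cite that \Cref{t:maximal ose wave} is stated on $[0,1]$; or
\item argue by Fatou. Along a sequence $t_k\to0$, \Cref{genind} gives $e^{it_k\Delta}f_j\to f_j$ a.e.\ for every $j$ simultaneously, since this is a countable union of null sets. Hence $\sum_{j>K}|\lambda_j||f_j|^2\le\sup_{0<t\le1}\sum_{j>K}|\lambda_j||e^{it\Delta}f_j|^2$ a.e.
\end{itemize}
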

 \Cref{c:pointwise wave} gives sufficient condition for the point wise convergence. On the other hand,  next  \Cref{necessary for infinite} establish necessary condition for the pointwise convergence. 
\begin{corollary}[necessary condition for infinite equations]\label{necessary for infinite}
	Let $\gamma_0 \in \Sp^{\alpha',s}(L^2(\M))$ is self-adjoint with $\alpha' \geq 1$ and $\mathcal{M}$ be as in \eqref{about manifold}. Suppose that the densities
	$\rho_{\gamma(t)}(z)$ and $\rho_{\gamma_0}(z)$ are well defined, and they satisfy
	\eqref{e:carleson-density}. Then $s  \geq \frac{d}{2(d+1)},$ where $d$ is the dimension of $\M.$	
\end{corollary}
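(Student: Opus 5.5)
Read as a contrapositive, the statement says: if $s<\frac{d}{2(d+1)}$, then the density convergence \eqref{e:carleson-density} fails for some self-adjoint $\gamma_0\in\Sp^{\alpha',s}$. The plan is to embed a single-particle counterexample into the operator setting with a rank-one initial datum. For $f\in H^s(\M)$ with $\|f\|_{H^s}=1$, take $\gamma_0=|f\rangle\langle f|$. Then $\gamma_0$ is self-adjoint and compact, and
\[
\langle\nabla\rangle^s\gamma_0\langle\nabla\rangle^s=|\langle\nabla\rangle^s f\rangle\langle\langle\nabla\rangle^s f|
\]
has exactly one nonzero singular value, namely $\|f\|_{H^s}^2=1$. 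Hence $\gamma_0\in\Sp^{\alpha',s}$ for every $\alpha'\ge 1$; in particular $\Sp^{1,s}\subseteq\Sp^{\alpha',s}$ by monotonicity.

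The solution of \eqref{Hartree} is $\gamma(t)=e^{it\Delta}\gamma_0e^{-it\Delta}=|e^{it\Delta}f\rangle\langle e^{it\Delta}f|$. Its densities are therefore
\[
\rho_{\gamma(t)}(z)=|e^{it\Delta}f(z)|^2,\qquad \rho_{\gamma_0}(z)=|f(z)|^2 .
\]
Both are well defined a.e., since $\langle\nabla\rangle^s$ is unitary in the relevant sense and $f\in L^2$. So \eqref{e:carleson-density} becomes the statement $|e^{it\Delta}f|\to|f|$ a.e.

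It remains to produce, for $s<\frac{d}{2(d+1)}$, some $f\in H^s(\M)$ with $|e^{it\Delta}f|\not\to|f|$ on a set of positive measure. The source depends on $\M$:
\begin{itemize}
\item For $\M=\R^d$, use Bourgain's counterexample \cite{Bourgain2016note}.
\item For $\M=\T^d$, use \Cref{Necessary condition for torus}(1), i.e.\ \cite{EceizabarrenaRenato2022fractalsperiodic}.
\item For $\M=\R^n\times\T^m$, use \Cref{Necessary condition for theorem}.
\end{itemize}

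These results are stated as failure of $e^{it\Delta}f\to f$, not of convergence of moduli, so a further step is needed. The standard Bourgain-type construction gives more. It first yields a uniform failure of the maximal estimate: there are $f_N$ with $\|f_N\|_{H^s}\to 0$ such that $\sup_{0<t<\delta_N}|e^{it\Delta}f_N|\gtrsim 1$ on sets of measure $\gtrsim 1$, while $|f_N|$ is small there. This failure is already a statement about moduli. A Stein--Nikishin type principle (or the explicit summation $f=\sum_k 2^{-k}f_{N_k}$ over a rapidly growing sequence, as in those proofs) then gives a single $f\in H^s$ with $\limsup_{t\to0}|e^{it\Delta}f(z)|>|f(z)|$ on a set of positive measure. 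This contradicts $|e^{it\Delta}f|^2\to|f|^2$.

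The main obstacle is this passage from the vector-valued divergence in the cited theorems to divergence of $|e^{it\Delta}f|^2$. I would handle it by going back into the cited constructions, where the lower bound is always on the modulus $|e^{it\Delta}f_N|$ while $f_N$ itself is small on the relevant set. The alternative is to phrase the argument through the failure of the local maximal bound $\|\sup_t|e^{it\Delta}f|\|_{L^2(B)}\lesssim\|f\|_{H^s}$, which involves only moduli, and invoke Stein's maximal principle in the form used for those results. Either route gives a contradiction whenever $s<\frac{d}{2(d+1)}$, hence $s\ge\frac{d}{2(d+1)}$.
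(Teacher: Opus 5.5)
Your proposal is correct, and its first step is the same as the paper's. A single $F\in H^s(\M)$ is packed into the rank-one datum $|F\rangle\langle F|=\langle\nabla\rangle^{-s}|f_1\rangle\langle f_1|\langle\nabla\rangle^{-s}$, with $f_1=\langle\nabla\rangle^sF$, so the density hypothesis reads $|e^{it\Delta}F|^2\to|F|^2$ a.e. The two arguments diverge at the moduli-versus-values gap you correctly identify.

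\textbf{The paper's route.} It closes the gap by polarization.
\begin{enumerate}
\item It fixes a nowhere-vanishing $h$ whose evolution converges everywhere (e.g.\ $e^{-|x|^2}$), and applies the hypothesis to the rank-one data built from $F+z_1h$, $z_1\in\{\pm1,\pm i\}$.
\item The identities for $|u\pm v|^2$ and $|u\pm iv|^2$ give $e^{it\Delta}F\,\overline{e^{it\Delta}h}\to F\overline{h}$.
\item Dividing by $h$ gives $e^{it\Delta}F\to F$ a.e., which contradicts \Cref{Necessary condition for theorem} and \Cref{Necessary condition for torus}. These are used purely as black boxes.
\end{enumerate}

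\textbf{Your route.} Your Stein-principle route uses instead that the obstruction is phase-blind.
\begin{enumerate}
\item Convergence of moduli for every $f\in H^s$ already makes the maximal function finite a.e. This holds along a countable set of times accumulating only at $0$, which is all the Bourgain-type examples use.
\item A.e.\ finiteness is the only input of the Stein-principle step; compare the proof of \Cref{point-wise convergence implies maximal estimate}.
\item The resulting weak-type maximal bound is contradicted directly by \Cref{Necessary condition for proposition} and its Euclidean and periodic analogues.
\end{enumerate}

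\textbf{Comparison.} Your route avoids the auxiliary $h$, and it shows that even $\sup_t\rho_{\gamma(t)}$ blows up on a set of positive measure. On the other hand, it reopens the single-particle proofs instead of quoting their statements. It also needs the density hypothesis for every rank-one datum, whereas the paper, once $F$ is given, only needs it for the four data $F+z_1h$.

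\textbf{Two corrections.}
\begin{enumerate}
\item Stein's principle gives an $H^s\to L^{2,\infty}(B)$ bound, not the strong $L^2(B)$ bound you wrote. The contradiction should therefore come from the $L^1(B)$ failure in the Bourgain-type propositions, which is legitimate since $L^{2,\infty}\subset L^1$ on sets of finite measure.
\item The explicit-summation alternative is not how those papers proceed. It would require controlling the high-frequency pieces at $z$-dependent times, so drop it. Drop also the smallness of $|f_N|$, which your main route never uses.
\end{enumerate}
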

\begin{remark}
	\Cref{t:maximal ose wave} and \Cref{c:pointwise wave} are the orthonormal extension of \Cref{max stri for wave} and \Cref{genind} respectively.  Indeed, by taking $\lambda_1=1$ and $\lambda_j=0$ for $j>1$ in \Cref{t:maximal ose wave} recovers \Cref{max stri for wave}. As on the torus, this comes at the cost of a larger regularity threshold, from $s>\frac{d}{d+2}$ in \Cref{genind} to $\frac{d}{4}+\frac{d}{2(d+2)}<s<\frac{d}{2}$ in \Cref{c:pointwise wave}, a loss of at least $\frac{d^2}{4(d+2)}$ at the lower end, together with a new upper cutoff at $s=\frac d2$ with no counterpart in \Cref{genind}, $d=n+m$.
\end{remark}

To the best of the author's knowledge, \Cref{t:maximal ose T,t:maximal ose wave} and \Cref{c:pointwise T,c:pointwise wave} are new in the literature.  On the other hand the pointwise convergence problem  \eqref{e:carleson} on  $\mathbb{R}^d$ is well studied in the literature and it is due to Bez, Kinoshita and Shiraki \cite{BezKinoshitaShinyaShiraki2024}.

\subsubsection{\textbf{Prior works}} We briefly mention the  ongoing development of \eqref{Hartree omega} and \eqref{Hartree}. The pointwise convergence for the infinite system \eqref{Hartree} was first studied by Bez, Lee and Nakamura \cite{BezLeeNakamura2020maximalose} in the one-dimensional Euclidean space. Specifically, they established the following.
\begin{knowntheorem} [Bez--Lee--Nakamura, \cite{BezLeeNakamura2020maximalose}]
	The weak-type maximal-in-time estimate
	\begin{equation}\label{1.15}
		\left\|
		\sum_j \lambda_j \left| e^{it\Delta} f_j \right|^2
		\right\|_{L_x^{2,\infty}L_t^\infty(\mathbb{R}\times\mathbb{R})}
		\lesssim
		\|\lambda\|_{\ell^{\alpha'}}
	\end{equation}
	holds for orthonormal functions $(f_j)_j$ in $\dot{H}^{\frac14}(\mathbb{R})$ and $\alpha'<2$. Consequently, if $\gamma_0 \in \Sp^{\alpha',\frac14}$
	and $\alpha'<2$, then the density function $\rho_{\gamma(t)}$ satisfies \eqref{e:carleson-density} for almost every $x\in\mathbb{R}$.
\end{knowntheorem}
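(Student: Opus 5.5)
The plan is to pass through the duality principle of Frank--Sabin. Set $T f(x,t)=e^{it\Delta}\abs{\nabla}^{-1/4}f$, so that orthonormal $(f_j)$ in $\dot H^{1/4}$ correspond to orthonormal $(g_j)$ in $L^2$ via $f_j=\abs{\nabla}^{-1/4}g_j$. Then \eqref{1.15} is equivalent to the Schatten bound
\[
\norm{W\,TT^{*}\,\overline{W}}_{\Sp^{\alpha}(L^2(\R^2))}\lesssim \norm{W}_{L^{4,2}_xL^2_t}^{2},\qquad \alpha=\tfrac{\alpha'}{\alpha'-1}>2 .
\]
This uses that the dual of $L^{2,\infty}_xL^\infty_t$ is (up to constants) $L^{2,1}_xL^1_t$, and that a function in $L^{2,1}_xL^1_t$ factors as $W\overline{W}$ with $W\in L^{4,2}_xL^2_t$. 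The operator $TT^*$ is convolution in $(x,t)$ with
\[
K(x,t)=\int_{\R} e^{i(x\xi-t\xi^2)}\abs{\xi}^{-1/2}\,d\xi ,
\]
and the classical oscillatory-integral estimate of Kenig--Ponce--Vega gives $\abs{K(x,t)}\lesssim \abs{x}^{-1/2}$ uniformly in $t$.

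I will embed $K$ in an analytic family $K_z(x,t)=\int e^{i(x\xi-t\xi^2)}\abs{\xi}^{-z}d\xi$ (suitably normalised by a Gamma factor) and run Stein interpolation between two lines.
\begin{enumerate}
\item On the line $\operatorname{Re} z=0$, the multiplier $\abs{\xi}^{-i\tau}$ is unimodular. The operator $W\,T_zT_z^*\overline{W}$ is then bounded in $\Sp^\infty$ by $\norm{W}_{L^\infty_xL^2_t}^2$, or, by the single-function Kenig--Ponce--Vega maximal estimate, by $\norm{W}^2$ in a mixed norm with $L^2_t$ in time.
\item On the line $\operatorname{Re} z=1-\delta$, I estimate the Hilbert--Schmidt norm via $\abs{K_z(x,t)}\lesssim\abs{x}^{z-1}$ and Minkowski in $t$:
\[
\norm{WK_z\overline{W}}_{\Sp^2}^2\le\iint \norm{W(x)}_{L^2_t}^2\,\abs{x-x'}^{-2(1-\operatorname{Re}z)}\,\norm{W(x')}_{L^2_t}^2\,dx\,dx' .
\]
One-dimensional Hardy--Littlewood--Sobolev then controls this.
\end{enumerate}
Interpolating to $z=1/2$ yields strong-type bounds $\Sp^{\alpha}$ for $\norm{W}_{L^{p}_xL^2_t}$ with $p$ slightly off $4$, for each $\alpha>2$.

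The main obstacle is the endpoint in $x$. At $z=1/2$ the Hardy--Littlewood--Sobolev exponent becomes exactly critical: the kernel is $\abs{x}^{-1}$ after squaring, so strong $L^4_x$ fails. This is why the statement is only weak-type, and why $\alpha'<2$ rather than $\alpha'\le 2$. To reach $L^{4,2}_x$ I would use a Lorentz-space refinement. The first choice is to replace Hardy--Littlewood--Sobolev by its Lorentz form (O'Neil's inequality), which controls the critical case in $L^{2,1}$-type norms. If that is insufficient, the fallback is a Bourgain-type bilinear interpolation trick: dyadically decompose $K=\sum_k K\chi_{\abs{x}\sim 2^k}$, prove $\Sp^{\alpha}$ bounds for each piece with geometric gain or loss $2^{\pm\epsilon k}$ at two nearby exponents, and sum. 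Summing these pieces produces exactly the restricted weak-type statement, which is the $L^{2,\infty}_x$ conclusion after duality.

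Finally, for the convergence statement, I write $\gamma_0=\sum_j\lambda_j\lvert f_j\rangle\langle f_j\rvert$ with $(f_j)$ orthonormal in $\dot H^{1/4}$ and $\lambda\in\ell^{\alpha'}$. The plan then proceeds as follows.
\begin{enumerate}
\item Approximate $\gamma_0$ by finite-rank operators with Schwartz $f_j$; for these, $\rho_{\gamma(t)}\to\rho_{\gamma_0}$ holds everywhere.
\item Control the remainder by \eqref{1.15}, applied to the positive and negative parts and polarised via $\lvert a\rvert^2-\lvert b\rvert^2$ bounds.
\item Combine with Chebyshev's inequality in $L^{2,\infty}$ to conclude that the set where $\limsup_{t\to0}\abs{\rho_{\gamma(t)}-\rho_{\gamma_0}}>\epsilon$ has measure zero.
\end{enumerate}
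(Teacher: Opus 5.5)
The paper does not prove this statement; it is quoted from Bez--Lee--Nakamura. So your argument can only be judged on its own terms, and its main line has a genuine gap.

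\textbf{The interpolation endpoint is false.} The estimate you claim on $\operatorname{Re}z=0$ does not hold. At $z=0$ one has $WT_0T_0^*\overline W=(WT_0)(WT_0)^*$ with $T_0=e^{it\Delta}$. Hence an $\Sp^\infty$ bound by $\|W\|^2_{L^\infty_xL^2_t}$ is equivalent to $\|e^{it\Delta}g\|_{L^2_xL^\infty_t}\lesssim\|g\|_{L^2}$, the $s=0$ maximal estimate, which fails (Dahlberg--Kenig). Kenig--Ponce--Vega requires $\dot H^{1/4}$ data, so it is an estimate on $\operatorname{Re}z=\frac12$, not on $\operatorname{Re}z=0$.

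This cannot be repaired by changing norms. For $W\in L^p_xL^2_t$, an $\Sp^\infty$ bound on $\operatorname{Re}z=\gamma$ is equivalent to $\|e^{it\Delta}g\|_{L^r_xL^\infty_t}\lesssim\|g\|_{\dot H^{\gamma/2}}$ with $\frac1r=\frac12-\frac1p$, which forces $\gamma\ge\frac12$. Your Hilbert--Schmidt bound needs $\gamma>\frac12$. So no Stein interpolation in the exponent of $|\xi|$ yields $\Sp^\alpha$, $\alpha<\infty$, at $z=\frac12$.

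Your intermediate claim of strong-type $\Sp^\alpha$ bounds at $z=\frac12$ with $p\neq4$ also contradicts scaling. Schatten norms are invariant under conjugation by unitary dilations, and $K(\lambda x,\lambda^2t)=\lambda^{-1/2}K(x,t)$, so only $p=4$ is admissible.

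\textbf{O'Neil does not rescue the critical Hilbert--Schmidt bound.} For $W=\mathbf 1_{[0,1]}(x)\mathbf 1_{[0,\delta]}(t)$ one gets $\|WK\overline W\|_{\Sp^2}^2\gtrsim\delta^2\log(1/\delta)$, while $\|W\|^4_{L^{4,r}_xL^2_t}\sim\delta^2$ for every $r$. The obstruction is the local non-integrability of $|x|^{-1}$ on $\R$, and this is the actual reason $\alpha'=2$ is excluded.

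\textbf{Your fallback has the right architecture, but the per-piece estimates are missing.} Those estimates are its whole content, and the scheme above cannot supply them. Set $K_k=K\mathbf 1_{\{2^k\le|x|<2^{k+1}\}}$ and $w_i(x)=\|W_i(x,\cdot)\|_{L^2_t}$. Only $\sup_t|K(x,t)|\lesssim|x|^{-1/2}$ is needed.

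First, Cauchy--Schwarz in $t$ and Young in $x$ give
\[
\|W_1K_kW_2\|_{\Sp^\infty}\lesssim 2^{k(\frac12-\frac1{p_1}-\frac1{p_2})}\|W_1\|_{L^{p_1}_xL^2_t}\|W_2\|_{L^{p_2}_xL^2_t}
\]
for all $(\frac1{p_1},\frac1{p_2})$ near $(\frac14,\frac14)$, on \emph{both} sides of the line $\frac1{p_1}+\frac1{p_2}=\frac12$. This uses the local integrability of $|x|^{-1/2}$, as opposed to $|x|^{-1}$. It replaces your false $\operatorname{Re}z=0$ endpoint.

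Second, the Hilbert--Schmidt computation with Young gives the same bound in $\Sp^2$, but only for $\frac1{p_1}+\frac1{p_2}\ge\frac12$.

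Third, complex interpolation between these at fixed $k$ gives, for each $\alpha>2$ (never $\alpha=2$), $\Sp^\alpha$ bounds with factor $2^{k(\frac12-\frac1{p_1}-\frac1{p_2})}$ on a full neighbourhood of $(\frac14,\frac14)$.

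Finally, the Keel--Tao bilinear real interpolation lemma sums over $k\in\Z$ to give $\|W_1TT^*W_2\|_{\Sp^\alpha}\lesssim\|W_1\|_{L^{4,2}_xL^2_t}\|W_2\|_{L^{4,2}_xL^2_t}$. This dualizes to \eqref{1.15}, because every $0\le G\in L^{2,1}_xL^1_t$ equals $|W|^2$ with $W\in L^{4,2}_xL^2_t$. The weak type is exactly the price of this real-interpolation summation.

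\textbf{Convergence part.} Your outline is fine, with two remarks. First, \eqref{1.15} applies directly to the real tail $\sum_{j>N}\lambda_j|e^{it\Delta}f_j|^2$, so splitting into positive and negative parts is unnecessary. Second, replacing the $f_j$ by Schwartz functions destroys orthonormality. Truncate in $j$ instead, and use single-function a.e.\ convergence for each $f_j\in\dot H^{1/4}$.
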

Further development in this direction was done by Bez, Kinoshita and Shiraki in \cite{BezKinoshitaShinyaShiraki2024} for the fractional Laplacian operator. We mention their particular case in the next theorem.  Let $\beta\in(0,d]$ be any real number, the Borel measure $\mu$ on $\mathbb{R}^d$ is said to be $\beta$-dimensional if
\[
\sup_{x\in\mathbb{R}^d,\,r>0}
\frac{\mu(B(x,r))}{r^\beta}
<\infty.
\]
We shall use $\mathfrak{M}^\beta(\mathbb{B}^d)$ to denote the collection of all $\beta$-dimensional probability measures supported on the unit ball $\mathbb{B}^d$. 
\begin{knowntheorem}[Bez-Kinoshita-Shiraki, \cite{BezKinoshitaShinyaShiraki2024}]\label{2.2}
	Let $d\geq 1$, $s\in[0,\frac d2)$, $\beta\in(0,d]$, $\alpha'\geq 1$,  $\mu\in\mathfrak{M}^\beta(\mathbb{B}^d)$, and  $s\geq \frac d4$. Then the maximal estimate 
	\begin{equation}\label{2.5}
		\left\|
		\sum_j \lambda_j \left| e^{it\Delta} f_j \right|^2
		\right\|_{L_x^1(\mathbb{B}^d,d\mu)L_t^\infty(0,1)}
		\lesssim
		\|\lambda\|_{\ell^{\alpha'}},
	\end{equation}
	holds for all orthonormal systems $(f_j)_j$ in $\dot{H}^s(\mathbb{R}^d),$ whenever
	\[
	s>\frac12\left(d-\frac{\beta}{\alpha'}\right).
	\]
	This is sharp in the sense that if
	\[
	s<\frac12\left(d-\frac{\beta}{\alpha'}\right),
	\]
	then there exist $\mu\in\mathfrak{M}^\beta(\mathbb{B}^d)$, an orthonormal system $(f_j)_j$ in $\dot{H}^s(\mathbb{R}^d)$, and a sequence $(\lambda_j)_j\in \ell^{\alpha'}$ such that \eqref{2.5} fails.
\end{knowntheorem}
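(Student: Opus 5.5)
The approach is the duality principle of Frank--Sabin followed by interpolation in the Schatten exponent between the endpoints $\alpha'=1$ and $\alpha'=2$. Write $Tf(x,t)=e^{it\Delta}|\nabla|^{-s}f$ restricted to $\mathbb{B}^d\times(0,1)$, with the $x$-variable measured by $\mu$. Then \eqref{2.5} for orthonormal $(f_j)$ in $\dot H^s$ is equivalent to the Schatten bound
\[
\|W\,TT^*\,\overline{W}\|_{\Sp^{\alpha}(L^2(d\mu\, dt))}\lesssim \|W\|^2_{L^\infty_x(d\mu)L^2_t},\qquad \tfrac1\alpha+\tfrac1{\alpha'}=1,
\]
since the dual of $L^1_x L^\infty_t$ is $L^\infty_x L^1_t$, and the weight $W$ enters quadratically. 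I would first reduce to frequency-localised pieces $T_N=TP_N$ and then sum dyadically in $N$. The strict inequality $s>\frac12(d-\beta/\alpha')$ should leave a factor $N^{-\delta}$ in each piece, so the sum converges geometrically. I expect the cross terms $T_NT_M^*$ with $N\neq M$ to be harmless by almost-orthogonality in frequency.

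\textbf{Endpoint $\alpha'=1$.} By the triangle inequality this is exactly the single-function maximal estimate
\[
\|\sup_{0<t<1}|e^{it\Delta}P_Nf|^2\|_{L^1(d\mu)}\lesssim N^{d-\beta+\varepsilon}\|P_N f\|_{L^2}^2,
\]
which is the threshold $s>\frac{d-\beta}{2}$. I would prove it via the $TT^*$ kernel bound
\[
|K_N(x,t)|\lesssim N^{d}\,(1+N|x|)^{-M}\quad\text{for } |x|\gg N|t|,
\]
together with stationary-phase decay in the remaining region, and then use that $\mu$ is $\beta$-dimensional, i.e.\ $\mu(B(x,N^{-1}))\lesssim N^{-\beta}$. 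The hypothesis $s\ge d/4$ enters here to control the region $|x|\lesssim N|t|$, where only the dispersive bound $N^{d}(N^2|t|)^{-d/2}$ is available.

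\textbf{Endpoint $\alpha'=2$ (Hilbert--Schmidt).} Here
\[
\|W T_NT_N^*\overline W\|_{\Sp^2}^2=\iint |W(x,t)|^2\,|K_N(x-x',t-t')|^2\,|W(x',t')|^2\,d\mu\,dt\,d\mu'\,dt',
\]
with $|K_N|\lesssim N^{d-2s}\min\bigl(1,(N^2|t|)^{-d/2}\bigr)$ plus rapid decay off the cone. Using $\sup_x\|W(x,\cdot)\|_{L^2_t}$ on both factors, it suffices to show
\[
\sup_{x,t}\int\!\!\int |K_N(x-x',t-t')|^2\,d\mu(x')\,dt'\lesssim N^{2(d-2s)}N^{-\beta}N^{\varepsilon}
\]
(after arranging the time integration against the $L^2_t$ norms, possibly via Schur's test in $t$). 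This gives the threshold $s>\frac12(d-\beta/2)$.

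\textbf{Interpolation and necessity.} I would interpolate between the two endpoints with a Stein analytic family. Replace $|\xi|^{-2s}$ by $|\xi|^{-2z}$ and interpolate $\Sp^1\to\Sp^\infty$ (the dual form of $\alpha'=1$) against $\Sp^2$. The threshold is affine in $1/\alpha'$, namely $\frac12(d-\beta/\alpha')$, and matches at both ends. For $\alpha'>2$ the condition $s<d/2$ together with monotonicity of Schatten classes should handle the remaining range. For sharpness I would take $\mu$ to be a $\beta$-dimensional Cantor-type measure adapted to scale $N^{-1}$. The orthonormal system would be $\sim N^{d}$ wave packets at frequency $N$, translated in space so that they remain orthonormal, with $\lambda_j=1$ on that set. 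The density then focuses on the support of $\mu$ at suitable times, and comparing $\|\lambda\|_{\ell^{\alpha'}}=N^{d/\alpha'}$ with the size of the left-hand side forces $s\ge\frac12(d-\beta/\alpha')$.

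The main obstacle is the Hilbert--Schmidt kernel integral against the fractal measure. One needs the decay of $K_N$ near the light cone $|x|\sim N|t|$ to be integrable against $d\mu(x')\,dt'$ with loss exactly $N^{-\beta}$. I would first split dyadically in $|t-t'|\sim 2^{-k}$ and use $\mu(B(x,r))\lesssim r^\beta$ on each annulus.
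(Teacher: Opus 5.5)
The paper only quotes this theorem from Bez--Kinoshita--Shiraki and gives no proof, so I assess your argument on its own.

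\textbf{Main gap: the $\alpha'=1$ endpoint and the interpolation in $s$.} Your localised bound $\|\sup_{0<t<1}|e^{it\Delta}P_Nf|^2\|_{L^1(d\mu)}\lesssim N^{d-\beta+\varepsilon}\|f\|_{L^2}^2$ cannot hold for all $\mu\in\mathfrak{M}^\beta(\mathbb{B}^d)$ when $\beta$ is large. For $\beta=d$ (normalised Lebesgue measure) it would give a.e.\ convergence for every $s>0$, contradicting Dahlberg--Kenig. Your own kernel analysis gives only $\sup_t|K_N(x,t)|\lesssim\min\{N^d,N^{d/2}|x|^{-d/2}\}$, hence $N^{d-\beta}+N^{d/2}$ after integrating against $\mu$. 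So the $\alpha'=1$ threshold is $\max\{\frac{d-\beta}{2},\frac d4\}$, and the thresholds do not ``match at both ends'' once $\beta>d/2$.

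With $u=1/\alpha'$, interpolating in $s$ between this and the $\alpha'=2$ threshold $\frac12(d-\frac\beta2)$ gives only $s>\frac{(2u-1)d}{4}+(1-u)(d-\frac\beta2)$. This is strictly larger than $\max\{\frac d4,\frac12(d-\beta u)\}$ whenever $\frac d2<\beta<d$ and $1<\alpha'<2$. For example, with $d=1$, $\beta=\frac45$, $\alpha'=\frac32$ the theorem gives $s\ge\frac14$, while your scheme gives only $s>\frac{17}{60}$. Moreover, at $s=\frac d4$ (allowed when $\beta>\frac{d\alpha'}{2}$) no dyadic piece decays, so Littlewood--Paley summation cannot reach it. For $\beta\le d/2$ your scheme does close.

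\textbf{Missing idea: fix $s$ and interpolate in the power of the kernel.} For $\frac d4\le s<\frac d2$, the $TT^*$ kernel $K(x,t)=\int e^{2\pi i(x\cdot\xi+t|\xi|^2)}|\xi|^{-2s}\,d\xi$ obeys Sj\"olin's uniform bound $\sup_t|K(x,t)|\lesssim|x|^{2s-d}$; this is exactly where $s\ge\frac d4$ enters. Take the analytic family with kernel $W(x,t)K_z(x-x',t-t')\overline{W(x',t')}$, where $K_z=K|K|^{\alpha'(1-z/2)-1}$. At $\operatorname{Re}z=0$, Cauchy--Schwarz in $t,t'$ followed by Schur's test in $x$ bounds the operator norm. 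At $\operatorname{Re}z=1$, compute the Hilbert--Schmidt norm. At $z=2/\alpha$ one has $K_z=K$, which yields
\[
\|W\,TT^*\,\overline{W}\|_{\mathfrak{S}^{\alpha}}\lesssim\|W\|_{L^\infty_x(d\mu)L^2_t}^{2}\Big(\sup_{x}\int|x-x'|^{-(d-2s)\alpha'}\,d\mu(x')\Big)^{1/\alpha'}.
\]
The right side is finite precisely when $(d-2s)\alpha'<\beta$. This covers $1\le\alpha'\le2$, including $s=\frac d4$, with no dyadic decomposition.

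\textbf{Further steps that fail as written.}

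The range $\alpha'>2$: monotonicity goes the wrong way, since $\ell^2\subset\ell^{\alpha'}$ and an $\ell^2$ bound does not imply the $\ell^{\alpha'}$ one. Here necessarily $s>\frac12(d-\frac\beta2)\ge\frac d4$, so you can instead interpolate your localised $\alpha'=2$ bound $N^{d-2s-\beta/2+\varepsilon}$ with the trivial Bessel bound at $\alpha'=\infty$:
\[
\sum_j|e^{it\Delta}|\nabla|^{-s}P_Nf_j|^2\le\int\psi_N^2|\xi|^{-2s}\,d\xi\sim N^{d-2s}.
\]
This gives $N^{d-2s-\beta/\alpha'+\varepsilon}$ per piece, which sums.

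Sharpness: $\sim N^d$ packets with $\|\lambda\|_{\ell^{\alpha'}}=N^{d/\alpha'}$ only force $s\ge\frac d2(1-\frac1{\alpha'})$, because by Bessel the density of a frequency-$N$ orthonormal family is pointwise at most $N^{d-2s}$. You need $\sim N^\beta$ packets, one on each $N^{-1}$-ball of a $\beta$-dimensional Cantor set carrying $\mu$. Then already for $|t|\lesssim N^{-2}$ (no focusing needed) the density is $\gtrsim N^{d-2s}$ on $\operatorname{supp}\mu$, against $\|\lambda\|_{\ell^{\alpha'}}=N^{\beta/\alpha'}$.

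Hilbert--Schmidt step: the relevant quantity is $\sup_x\int\sup_\tau|K_N(x-x',\tau)|^2\,d\mu(x')$, not an integral in $t'$. With $W\in L^\infty_xL^2_t$ the factor $|W|^2$ is only $L^1$ in time, so no Schur test in $t$ is available. With this correction your bound $N^{2(d-2s)-\beta}$ (up to a logarithm when $\beta=d$) is right.
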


We introduce the divergence set
\[
\mathcal{D}(\gamma_0)
:=
\left\{
x\in\mathbb{R}^d :
\lim_{t\to 0}\rho_{\gamma(t)}(x)\neq \rho_{\gamma_0}(x)
\right\}
\]
associated with the initial data $\gamma_0$, and where $\gamma(t)$ is the solution of \eqref{Hartree}.
\begin{knowncorollary}[\cite{BezKinoshitaShinyaShiraki2024}]\label{2.3}
	Let $d\geq 1$, $s\in [0,\frac d2)$. Suppose $\gamma_0\in \Sp^{\alpha',s}$ is self-adjoint with $1\leq \alpha' < \frac{d}{d-2s}.$
	For $s\geq \frac d4$, the pointwise convergence
	\[
	\lim_{t\to 0}\rho_{\gamma(t)}(x)=\rho_{\gamma_0}(x)
	\]
	holds and furthermore we have
	\[
	\dim_H \mathcal{D}(\gamma_0)\leq (d-2s)\alpha',
	\]
	where $\dim_H \mathcal{D}(\gamma_0)$ is Hausdorff dimension of $\mathcal{D}(\gamma_0).$
\end{knowncorollary}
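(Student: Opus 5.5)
The plan is to deduce Corollary~\ref{2.3} from the maximal estimate of Theorem~\ref{2.2}, following the standard route from maximal bounds to almost everywhere convergence and to Hausdorff-dimension bounds on the divergence set.

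\textbf{Step 1: setting up the data.} Write the self-adjoint $\gamma_0\in\Sp^{\alpha',s}$ through the spectral decomposition of $\langle\nabla\rangle^s\gamma_0\langle\nabla\rangle^s$. This gives
\[
\gamma_0=\sum_j\lambda_j|f_j\rangle\langle f_j|,
\]
where $(f_j)$ is orthonormal in $H^s$ and $\lambda\in\ell^{\alpha'}$. Then $\rho_{\gamma(t)}=\sum_j\lambda_j|e^{it\Delta}f_j|^2$. Replacing $\dot H^s$ by $H^s$ is harmless: the low-frequency part is smooth, and orthonormality in $H^s$ can be converted after inserting $\langle\nabla\rangle^{-s}|\nabla|^{s}$, which is bounded.

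\textbf{Step 2: well-definedness of the densities.} Since $s<\frac d2$ and $\alpha'<\frac d{d-2s}$, the orthonormal Sobolev (Lieb--Thirring-type) inequality applies:
\[
\Big\|\sum_j\lambda_j|f_j|^2\Big\|_{L^{q}}\lesssim\|\lambda\|_{\ell^{\alpha'}},\qquad \frac1q=\frac{d-2s}{d}\cdot\frac{1}{\alpha'}\ \text{(scaling)} .
\]
It shows that $\rho_{\gamma_0}$ is locally integrable, and the same holds for each $\rho_{\gamma(t)}$.

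\textbf{Step 3: the density argument.} Fix $\beta>(d-2s)\alpha'$ with $\beta\le d$. This is possible because $\alpha'<\frac d{d-2s}$, and it guarantees $s>\frac12(d-\beta/\alpha')$, so Theorem~\ref{2.2} applies for every $\mu\in\mathfrak M^\beta(\mathbb B^d)$. Truncate the decomposition to $\gamma_0^{K}=\sum_{j\le K}\lambda_j|P_{\le K}f_j\rangle\langle P_{\le K}f_j|$ (frequency truncation). For this finite-rank, smooth data, $\rho_{\gamma^K(t)}\to\rho_{\gamma^K_0}$ everywhere.

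\textbf{Step 4: controlling the remainder (main obstacle).} The difference $\rho_{\gamma(t)}-\rho_{\gamma^K(t)}$ is not of the form $\sum\lambda_j|e^{it\Delta}g_j|^2$ with $(g_j)$ orthonormal. I would handle this as follows. Write $|a|^2-|b|^2=\Re\big((a-b)\overline{(a+b)}\big)$ and apply Cauchy--Schwarz in $j$ with weights $|\lambda_j|$. This bounds the remainder by the product of $\big(\sum|\lambda_j||e^{it\Delta}(f_j-f_j^K)|^2\big)^{1/2}$ and a similar uniformly bounded term. To apply the maximal estimate to the first factor, I would use the operator form of Theorem~\ref{2.2}, namely
\[
\|\rho_{e^{it\Delta}A e^{-it\Delta}}\|_{L^1(\mu)L^\infty_t}\lesssim\|A\|_{\Sp^{\alpha',s}}\quad\text{for } A\ge0,
\]
which follows from the orthonormal version by the spectral decomposition. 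Applying it to $A=(1-P_{\le K})\gamma_0^{+}(1-P_{\le K})$ and its analogues, separately for the positive and negative parts of $\gamma_0$, makes the first factor small in $L^1(\mu)L^\infty_t$ as $K\to\infty$.

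\textbf{Step 5: concluding the divergence bound.} Steps 3 and 4 give $\mu(\mathcal D(\gamma_0))=0$ for every $\mu\in\mathfrak M^\beta(\mathbb B^d)$; translations and scaling extend this from the unit ball to all of $\mathbb R^d$. By Frostman's lemma, this forces $\dim_H\mathcal D(\gamma_0)\le\beta$. Letting $\beta\downarrow(d-2s)\alpha'$ gives the stated dimension bound. Since $(d-2s)\alpha'<d$, taking $\beta=d$ and $\mu$ to be Lebesgue measure yields convergence almost everywhere.
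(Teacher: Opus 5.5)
The paper only quotes this corollary from Bez--Kinoshita--Shiraki, so there is no in-paper proof to match. The natural comparison is the argument of \Cref{sec7}.

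\textbf{Comparison with \Cref{sec7}.} Your overall architecture is the right one. Choosing $\beta\in((d-2s)\alpha',d]$ is exactly the condition $s>\frac12(d-\beta/\alpha')$ of \Cref{2.2}, and it is followed by approximation and Frostman. The difference is how you truncate. \Cref{sec7} truncates only in $j$, so the remainder $\sum_{j>N}\lambda_j|e^{it\Delta}f_j|^2$ is again an orthonormal-system density and the finite-rank part is handed to single-particle convergence. You also truncate in frequency, so the finite-rank part converges everywhere with no single-particle input.

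You pay for this with Step~4, and that step is correct. With $A=\sum_j|\lambda_j|\,|f_j\rangle\langle f_j|$, one has $\|(1-P_{\le K})\langle\nabla\rangle^{s}A\langle\nabla\rangle^{s}(1-P_{\le K})\|_{\mathfrak S^{\alpha'}}\to0$, because $1-P_{\le K}\to0$ strongly and $\alpha'<\infty$. Monotonicity of Schatten norms on positive operators then handles the partial sums. Rank-only truncation would also suffice here, since single-particle convergence $\mu$-a.e.\ is itself \Cref{2.2} with $\lambda=(1,0,0,\dots)$ plus approximation. So your route mainly buys self-containment.

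\textbf{The exponent in Step~2.} The exponent you state is wrong. For $\lambda=(1,0,0,\dots)$ your inequality would give $H^s\subset L^{2q}$ with $2q=\frac{2d\alpha'}{d-2s}>\frac{2d}{d-2s}$ when $\alpha'>1$, which is false. Scaling in fact forces $q=\frac{d}{d-2s}$ regardless of $\alpha'$. The correct input is \Cref{lem:lieb-sobolev-product} with $p=\frac{d}{d-2s}$ (so $s=\frac{d}{2p'}$), together with $\ell^{\alpha'}\subset\ell^{p,1}$ for $\alpha'<p$. This is where the hypothesis $\alpha'<\frac{d}{d-2s}$ is used, just as $\ell^{\alpha'}\subset\ell^{2,1}$ is used in \Cref{sec7}. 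Local integrability still follows, so this error is minor.

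\textbf{The gap in Steps~2 and~5.} For $\beta<d$, a function defined only Lebesgue-a.e.\ (your $L^q$ limit) has no well-defined values $\mu$-a.e. So $\rho_{\gamma_0}$ and $\rho_{\gamma(t)}$ are not yet defined where you need them, and ``$\mu(\mathcal D(\gamma_0))=0$'' is not meaningful as written. Relatedly, a splitting like $M_1+M_2+M_3$ in \Cref{sec7} needs $\rho_{\gamma_0^K}\to\rho_{\gamma_0}$ in $\mu$-measure at $t=0$. A bound on $\sup_{0<t<1}$ does not give this directly.

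Your own tools close the gap, as follows.
\begin{enumerate}
\item Let $B$ be a positive finite-rank operator built from frequency-truncated functions. Then $t\mapsto\rho_{e^{it\Delta}Be^{-it\Delta}}(x)$ is continuous, so $\sup_{0\le t<1}$ equals $\sup_{0<t<1}$ and is controlled in $L^1(\mu)$ by the operator form of \Cref{2.2}.
\item Apply this, together with your Cauchy--Schwarz step, to $\gamma_0^{K'}-\gamma_0^{K}$ for $K'\ge K$. This gives $\|\sup_{0\le t<1}|\rho_{\gamma^{K'}(t)}-\rho_{\gamma^{K}(t)}|\|_{L^1(\mu)}\le C_\mu\varepsilon_K$, with $\varepsilon_K\to0$ independent of $\mu$ and of $K'$.
\item Fix a subsequence with $\sum_k\varepsilon_{K_k}<\infty$. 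Off a Borel set that is $\mu$-null for every $\mu\in\mathfrak M^\beta(\mathbb B^d)$ (and Lebesgue-null), the continuous functions $t\mapsto\rho_{\gamma^{K_k}(t)}(x)$ converge uniformly on $[0,1)$.
\item Define the densities as these limits. Continuity at $t=0$ is then automatic off the exceptional set, so $\mathcal D(\gamma_0)$ is contained in it, and Frostman's lemma applies to that Borel set.
\end{enumerate}
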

\subsection{Further comments}

\begin{remark} Our main results for fermionic system \eqref{Hartree} deserve several comments.
	\begin{enumerate}
		\item  Pointwise convergence results (\Cref{c:pointwise T,c:pointwise wave}) are the periodic and waveguide counterparts of \Cref{2.3}.
		\item \Cref{necessary for infinite} plays the role of a converse to this, in the same spirit as the sharpness statement in \Cref{2.2}.
		\item The weak-type maximal estimate and  the weak-type norm $L_x^{2,\infty}$ \eqref{1.15} is restricted to special dimension $d=1$. While \Cref{c:pointwise T,c:pointwise wave} hold for every $d=n+m\ge1$, indeed, the density convergence directly follows from a strong-type $L^2_x$ maximal estimate.
		\item \Cref{2.2} covers general dimension  $d$ but in a weighted norm $L_x^1(\mathbb B^d,d\mu)$ over $\beta$-dimensional measures $\mu\in\mathfrak M^\beta(\mathbb B^d)$, with threshold $s>\frac12(d-\beta/\alpha')$ depending on $\beta$. While \Cref{t:maximal ose T,t:maximal ose wave} give an unweighted $L^2_x$ estimate on $\T^d$ and on $\R^n\times\T^m$, at the fixed threshold $s>\frac{d}{4}+\frac{d}{2(d+2)}$, reflecting the same periodic Strichartz exponent $\frac{d}{d+2}$ (\Cref{max stri for torus}) that already distinguishes the torus from the Euclidean case in the single-particle problem. 
		
		\item  The  necessary condition (\Cref{necessary for infinite})  for the pointwise convergence is proved by 
		packing the single-particle non-convergent function furnished by \Cref{Necessary condition for theorem} into a rank-one operator.  And then recovering pointwise convergence of $e^{it\Delta}F$ itself from convergence of $|e^{it\Delta}F|^2$ via polarization against a fixed nowhere-vanishing Schwartz function. In contrast,  \Cref{2.2} records the sharpness of the maximal estimate \eqref{2.5} itself, for the same family of $\beta$-dimensional measures. See also \Cref{Necessary condition for torus}.
	\end{enumerate}
\end{remark}

\begin{remark} We comment on the hypothesis of our main results.
	\begin{enumerate}
		\item  For the proof strategy of \Cref{t:maximal ose T,t:maximal ose wave}, we refer to \Cref{psfs}.  It remains unknown for both theorems whether the condition $\alpha'=2$ is optimal in the range
		\[
		s>\frac{d}{4}+\frac{d}{2(d+2)}.
		\]
		\item In \Cref{t:maximal ose T,t:maximal ose wave} the regularity threshold $s>\frac{d}{4}+\frac{d}{2(d+2)}$ is exactly what makes the dyadic sum $\sum_N N^{\frac12(\sigma-2s+\varepsilon)}$, $\sigma=\frac d2+\frac{d}{d+2}$, converge in the proof. See \eqref{}. 
		\item \Cref{c:pointwise T,c:pointwise wave} require $s>\frac{d}{4}+\frac{d}{2(d+2)},$ because of \Cref{t:maximal ose T,t:maximal ose wave}. While the upper bound $s<\frac d2$ comes from \Cref{lem:lieb-sobolev-product}, it plays role only in defining $\rho_{\gamma_0}$ and $\rho_{\gamma(t)}$. And it is not known whether either the exponent $s$ or the range $\alpha'<2$ is optimal.
		\item In \Cref{c:pointwise T,c:pointwise wave}, the condition $\alpha'<2$ arises from the inclusion (see the footnote in \Cref{sec7})
		\[
		\ell^{\alpha'} \subseteq \ell^{2,1}, \qquad \alpha'<2.
		\]
	\end{enumerate}
\end{remark}

The rest of the paper is organized as follows. \Cref{pre} introduces our notation, function spaces, and Schatten-space background. \Cref{imp-lem} establishes the tools needed for the main results, including the Lieb--Sobolev inequality on $\R^n \times \T^m$ (\Cref{lem:lieb-sobolev-product}). \Cref{sec5} proves \Cref{max stri for wave} and \Cref{genind}. \Cref{sec-necc-single} proves \Cref{Necessary condition for theorem}. 
\Cref{sec-suff-inf} proves \Cref{t:maximal ose T,t:maximal ose wave}, and \Cref{sec7} proves \Cref{c:pointwise
T,c:pointwise wave}. \Cref{sec-necc-inf} proves \Cref{necessary for infinite}.

\section{Preliminaries}\label{pre}
\textbf{Notations}.
Denote $\langle \xi \rangle^{s}=(1+|\xi|^2)^\frac{s}{2}, s\in \mathbb R.$ Denote $\mathbf{1}_A$ for the characteristic function of a set $A$.  
The notation \( A \approx B \), though less rigorous, will sometimes be used to indicate that \( A \) and \( B \) are essentially of the same size, up to small and insignificant error terms.
We write $B^n(x,R)$ for the ball centred at $x \in \R^n$ of radius $R>0.$
We use usual Lebesgue spaces $L_{z}^p:=L^{p}_{z}{(\M)}$ and $L_{z}^{p}L_{t}^{q}(\M \times I)=L_{z}^{p}(\M,L_{t}^{q}(I)),$ with the following norm
$$\|f\|_{L_{z}^{p}L_{t}^{q}(\M \times I)}=\|\|f\|_{L^q(I)} \|_{L^p(\M)},$$
where $I$ is a bounded subset of $\mathbb{R}$; if not, we will mention it. The \textbf{Lorentz spaces} on a $\sigma$-finite measure space \((Y,\mu)\),  denoted by \(L^{p,q}(Y)\) \((1 \leq p,q < \infty)\), are given by the following norm
\begin{equation}\label{Lorentz Lpq space definition}
	\norm{f}_{L^{p,q}} := \left( p\int_0^\infty \lambda^{q-1} \big(\mu(\{x\in Y : \abs{f(x)} > \lambda\})\big)^{q/p}\, d\lambda \right)^{1/q}
	<\infty.
\end{equation}
For $q= \infty,$ the \textbf{weak \(L^p\) norm} \((1 \leq p < \infty)\) is defined by
\begin{equation}\label{weak Lp space definition}
	\|f\|_{L^{p,\infty}(Y)}
	:=
	\sup_{\lambda>0}
	\lambda\,\mu\big(\{x\in Y:|f(x)|>\lambda\}\big)^{1/p}
	<\infty.
\end{equation}
Throughout the rest of the paper, we fix the notation
\begin{equation}\label{XxI}
	X \times I=\begin{cases}
		\T^d \times \T \quad &if \quad \M=\T^d \\
		B^n(0,1) \times \T^m \times (0,1] \quad &if \quad  \M=\R^n \times \T^m
	\end{cases}.
\end{equation}
The \textbf{Lorentz sequence space} $\ell^{p,q}(\mathbb{N}) \ \  ( 1\leq p<\infty, 1\leq q\leq \infty)$ is defined by the norm
\begin{equation}\label{Lorentz sequence space}
	\|\lambda\|_{\ell^{p,q}}
	:=
	\left\|
	j^{\frac1p-\frac1q}\lambda_j^*
	\right\|_{\ell^q(\mathbb{N})},
\end{equation}
where $(\lambda_j^*)_j$ denotes the non-increasing rearrangement of $(|\lambda_j|)_j$. For $1\le r\le\infty$ we write $r'$ for the conjugate exponent, $\tfrac1r+\tfrac1{r'}=1$,
with $1'=\infty$ and $\infty'=1$; in particular $\alpha'$, $p'$ and $q'$ always denote
the conjugates of $\alpha$, $p$ and $q$.
Let us denote \begin{equation*}
	z=\begin{cases}
		x \in \mathbb{R}^d & if \quad z \in \mathbb{R}^d \\
		x \in \mathbb{T}^d & if \quad z \in \mathbb{T}^d \\
		(x,y) \in \mathbb{R}^n \times \mathbb{T}^m & if \quad z \in \mathbb{R}^n \times \mathbb{T}^m
	\end{cases},
\end{equation*}
and if $\xi \in \mathbb{R}^n \times \mathbb{Z}^m$ then $\xi=(\xi_{1},\xi_{2}),$ where $\xi_{1} \in \mathbb{R}^n$ and $\xi_{2} \in \mathbb{Z}^m.$ We define the Fourier transform $\mathcal{F}$ on $\mathcal{M}$ (see \eqref{about manifold}) and inverse Fourier transform $\mathcal{F}^{-1}$ as follows:
$$(\mathcal{F}f)(\xi)=\widehat{f}(\xi)=\int_{\mathcal{M}} f(z) e^{-2 \pi i z \cdot \xi}dz, \quad \xi \in \widehat{\mathcal{M}}, $$
$$(\mathcal{F}^{-1}f)(z)={f}^{\vee}(z)=\int_{\hM} f(\xi) e^{2 \pi i z \cdot \xi} \,d\xi, \quad z \in \mathcal{M}.$$
The \textbf{Sobolev spaces} $H^s(\M)$ and $\dot{H}^s(\M)$  on $\M$ are given by the following norms
$$\|f\|_{ H^{s}(\M)}= \|\langle \nabla \rangle^{s} f\|_{L^2(\M)} \quad \text{ and } \quad \|f\|_{ \dot{H}^{s}(\M)}= \|\lvert \nabla \rvert^{s} f\|_{L^2(\M)}$$
respectively, where  
$$\mathcal{F}[\langle \nabla_{\M} \rangle^{s} f] (\xi)
=\langle \xi \rangle^{s} \, \mathcal{F}(f)(\xi) \quad \text{and} \quad \mathcal{F}[|\nabla_{\M}|^{s} f] (\xi)
=|\xi|^{s} \, \mathcal{F}(f)(\xi), \qquad \xi \in \hM.
$$
\textbf{Partition of unity.} Let $\eta_{1}: \mathbb{R} \to [0,1]$ be a smooth even function satisfying 
\begin{equation*}
	\eta_{1}(x)= \begin{cases}
		1 &  if \quad \abs{x} \leq 1 \\
		0  &  if \quad \abs{x} \geq 2
	\end{cases}.
\end{equation*}
Then we form $\eta$ on $\mathbb R^{n+m}= \mathbb R^d$ as follows:
\[\eta(\xi)=\eta_{1}(\xi'_{1})\eta_{1}(\xi'_{2}) \cdots \eta_{1}(\xi'_{n+m}), \quad \xi=(\xi'_{1}, \cdots, \xi'_{n+m}) \in \mathbb{R}^n \times \mathbb{R}^m.\]
For dyadic integer $N \in 2^{\mathbb{N}},$ we define \textbf{Littlewood-Paley projectors} on $\M$ by 
\begin{equation}\label{P less N}
	\widehat{P_{1}f}(\xi)= \eta(\xi) \widehat{f}(\xi), \quad   \widehat{P_{\leq N}f}(\xi)= \eta\left(\frac{\xi}{N}\right) \widehat{f}(\xi), \quad \xi \in \hM,
\end{equation}
and 
\begin{equation}\label{lpope}
	P_N f = P_{\le N}f-P_{\le N/2}f,  \quad \text{where} \quad  P_{\leq \frac{1}{2}}=0.
\end{equation}
Next, define $\psi(x)=\eta(x)-\eta(2x)$ and
\begin{equation}\label{psi N}
	\psi_N(x)=
	\begin{cases}
		\eta(x) & N=1\\
		\psi(N^{-1}x) & N>1
	\end{cases},
\end{equation}
then $\operatorname{supp}\psi_{N}=\{x: |x| \sim N\}$ for all $N>1.$
Then, using \eqref{lpope}, we obtain
\[
\widehat{P_N f}(\xi)=\psi_N(\xi)\,\widehat f(\xi).
\]
Moreover, by \eqref{psi N}, we get 
$$ \sum_{N \in 2^{\mathbb{N}}} \psi_{N}(\xi)=1, \qquad \xi \in \mathbb{R}^d.$$ 
We recall \textbf{Bernstein inequality}: For every $1\le p\le q\le\infty$, $s\ge0$, and dyadic $N\ge1$,
\begin{equation}\label{Bernstein}
	\||\nabla|^sP_Nf\|_{L^q(\M)} \lesssim N^{s+d\left(\frac{1}{p}-\frac{1}{q}\right)} \|P_Nf\|_{L^p(\M)},
\end{equation}
where $d$ is the dimension of the manifold $\M.$

\subsection{Basic properties of Schatten spaces $\Sp^{\alpha}$} \label{schp}
Let $\mathcal{B}(\cH)$ denote the space of all bounded operators on a Hilbert space $\cH.$   
For $\alpha \in [1, \infty),$ define   Schatten $\alpha$-norm of $A \in \mathcal{B}(\cH)$ by 
$$\|A\|_{\Sp^{\alpha}}=\|A\|_{\Sp^{\alpha}(\cH)} = (\operatorname{Tr}|A|^{\alpha})^{\frac{1}{\alpha}}.$$ 
The \textbf{Schatten space} $\Sp^{\alpha }(\cH) \ (\alpha \in [1, \infty))$ consists of all compact operators $A \in \mathcal{B}(\cH)$ such that $\operatorname{Tr}|A|^{\alpha} < \infty,$ where $|A| = \sqrt{A^* A}, A^* \text{ is adjoint of }A$.  
The  Schatten space norm is  given by 
$$\|A\|_{\Sp^{\alpha}}=\|A\|_{\Sp^{\alpha}(\cH)} = (\operatorname{Tr}|A|^{\alpha})^{\frac{1}{\alpha}}.$$ 
The Schatten space norm coincides with the operator norm for  $\alpha = \infty,$ 
thus we  define $$\|A\|_{\Sp^{\infty}}=\|A\|_{\Sp^{\infty}(\cH)} = \|A\|_{\cH\to \cH}.$$ 
Let $\mathcal{G}$ be another Hilbert space. For $1\leq p,q < \infty,$ the \textbf{Lorentz Schatten space $\Sp_{p,q}(\cH,\mathcal{G})$} consists of all compact operators $T:\cH\to\mathcal{G}$ such that the sequence of singular values $s(T) = (s_n(T))_{n\ge 1}$ (in decreasing
order) lies in Lorentz sequence space $\ell^{p,q}$ (see \eqref{Lorentz sequence space}), with norm $$\norm{T}_{\mathfrak S_{p,q}} = \norm{s(T)}_{\ell^{p,q}}.$$
When $p=q$ and $\cH=\mathcal{G},$ then  $\Sp_{p,p}(\cH,\mathcal{G})=\Sp^{p}(\cH)$ (Schatten space).
For $q\geq 1$, the \textbf{weak Lorentz Schatten space $\Sp_{q,\infty}(\cH,\mathcal{G})$} consists of compact operators $T:\cH\to\mathcal{G}$ for which
\[
\|T\|_{\Sp_{q,\infty}}^{q} = \sup_{\kappa>0}\, \kappa^{q}\, n_\kappa(T) <\infty,
\]
where, writing $s_1(T)\geq s_2(T)\geq\cdots$ for the singular values of $T$ (i.e.\ the eigenvalues of $(T^*T)^{1/2}$, listed with multiplicity), $n_\kappa(T)$ denotes the number of singular values that exceed the threshold $\kappa$. For further details see \cite{simon2005trace} and \cite{Frank2014Cwikel}.

Whenever the underlying Hilbert spaces $\cH$ and $\mathcal{G}$ are clear from the context, we simply write $\Sp_{p,q}$ instead of $\Sp_{p,q}(\cH,\mathcal{G})$.

\begin{knownlemma}[Duality principle, Lemma 3 in \cite{frank2017restriction}]\label{PL1} Let $p, q \geq 1,$ and $\alpha \geq 1.$ Let $T$ be a bounded operator from a separable Hilbert space $\cH$ to $L_{z}^{2p}L_{t}^{2q}(\M \times I)$. Then the following are equivalent.
	\begin{enumerate}
		\item[(i)] There is a constant $C > 0$ such that
		\begin{equation*}
			\left\|WTT^*W\right\|_{\Sp^\alpha (\cH)} \leq C\left\|W\right\|^{2}_{L_z^{2p'}L_t^{2q'}(\M \times I)},
		\end{equation*} for all $ W \in {L_z^{2p'}L_t^{2q'}(\M \times I)}.$ Where the function $W$ act as a multiplication operator. 
		\item[(ii)] There is a constant $C' > 0$ such that for any ONS $(f_j)_{j \in \mathbb{Z}} \subset L_{z}^{2}(\mathcal{M})$ and any sequence $(\lambda_j)_{j \in \mathbb{Z}} \subset \mathbb{C}$,
		\begin{equation*}
			\left\| \sum_{j \in \mathbb{Z}} \lambda_j |Tf_j|^2 \right\|_{L_z^{p}L_t^{q}(\M \times I)} \leq C' \left( \sum_{j \in \mathbb{Z}} |\lambda_j|^{\alpha'} \right)^{1/\alpha'}.
		\end{equation*}
\end{enumerate} \end{knownlemma}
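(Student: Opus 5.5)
The plan is to prove both implications by rewriting the density $\sum_j\lambda_j|Tf_j|^2$ as a trace against the multiplication operator $|W|^2$. Then (i) and (ii) become dual statements under the Schatten duality $(\Sp^{\alpha})^*=\Sp^{\alpha'}$ and the Lebesgue duality $(L^{p}_zL^{q}_t)^*=L^{p'}_zL^{q'}_t$. The basic identity I will use is as follows. For an ONS $(f_j)$ and $\gamma=\sum_j\lambda_j|f_j\rangle\langle f_j|$, and for a function $V$ on $\M\times I$,
\begin{equation*}
\int_{\M\times I} V\,\sum_j\lambda_j|Tf_j|^2\,dz\,dt=\sum_j\lambda_j\langle Tf_j,VTf_j\rangle=\operatorname{Tr}\big(\gamma\,T^*VT\big).
\end{equation*}
This is first checked for finitely many nonzero $\lambda_j$ and bounded, compactly supported $V$, and then extended by monotone convergence and density. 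By orthonormality we also have $\|\gamma\|_{\Sp^{\alpha'}}=\|\lambda\|_{\ell^{\alpha'}}$.

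\textbf{(i)$\Rightarrow$(ii).} By duality in $L^p_zL^q_t$, it suffices to bound $\big|\int V\sum_j\lambda_j|Tf_j|^2\big|$ for $\|V\|_{L^{p'}_zL^{q'}_t}\le1$. Factor $V=\overline{W_1}W_2$ with $W_1=|V|^{1/2}$ and $W_2=|V|^{1/2}\operatorname{sgn}V$. Then $\|W_i\|^2_{L^{2p'}_zL^{2q'}_t}=\|V\|_{L^{p'}_zL^{q'}_t}$ and $T^*VT=(W_1T)^*(W_2T)$. By the Schatten Hölder inequality,
\begin{equation*}
|\operatorname{Tr}(\gamma T^*VT)|\le\|\gamma\|_{\Sp^{\alpha'}}\,\|W_1T\|_{\Sp^{2\alpha}}\,\|W_2T\|_{\Sp^{2\alpha}}.
\end{equation*}
Moreover $\|W_iT\|_{\Sp^{2\alpha}}^2=\|W_iTT^*\overline{W_i}\|_{\Sp^{\alpha}}$, which by (i) is at most $C\|W_i\|^2_{L^{2p'}_zL^{2q'}_t}$. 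Hypothesis (i) is stated for $WTT^*W$; applying it with $|W|$ loses nothing, since $\|WTT^*\overline W\|_{\Sp^\alpha}=\|T^*|W|^2T\|_{\Sp^\alpha}$ depends only on $|W|$. This yields (ii) with $C'=C$.

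\textbf{(ii)$\Rightarrow$(i).} Set $A=WT$. Then $\|WTT^*\overline W\|_{\Sp^\alpha}=\|AA^*\|_{\Sp^\alpha}=\|A^*A\|_{\Sp^\alpha}=\|T^*|W|^2T\|_{\Sp^\alpha}$, and this operator is nonnegative. For a nonnegative operator $B$, the $\Sp^\alpha$ norm is attained by testing against nonnegative finite-rank $\gamma$ with $\|\gamma\|_{\Sp^{\alpha'}}\le1$; for instance, take $\gamma$ proportional to $B^{\alpha-1}$ restricted to finitely many spectral projections, and pass to the supremum. Any such $\gamma$ can be written as $\sum_j\lambda_j|f_j\rangle\langle f_j|$ with $(f_j)$ orthonormal. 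By the identity above and Hölder,
\begin{equation*}
\operatorname{Tr}\big(\gamma T^*|W|^2T\big)=\int|W|^2\sum_j\lambda_j|Tf_j|^2\le\|W\|^2_{L^{2p'}_zL^{2q'}_t}\,C'\|\lambda\|_{\ell^{\alpha'}}.
\end{equation*}
Taking the supremum over $\gamma$ gives (i) with $C=C'$.

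The main obstacle is not the algebra but the functional-analytic justification. One must make sense of $T^*|W|^2T$ and of the trace identity when $\gamma$ has infinite rank and $W$ is a general element of $L^{2p'}_zL^{2q'}_t$, and show that the operators involved are compact. I would handle this by working first with finite-rank $\gamma$ and with $W$ bounded with bounded support, where $WT$ is bounded from $\cH$ to $L^2$ because $T$ maps into $L^{2p}_zL^{2q}_t$. The general case then follows by monotone convergence in (ii) and by lower semicontinuity of Schatten norms under strong convergence in (i).
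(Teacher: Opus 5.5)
Your proposal is correct and is essentially the standard Frank--Sabin argument, which the paper only cites without proof: the identity $\int V\sum_j\lambda_j|Tf_j|^2=\operatorname{Tr}(\gamma\,T^*VT)$ with Schatten--H\"older and the factorization $V=\overline{W_1}W_2$ gives (i)$\Rightarrow$(ii), and testing the nonnegative operator $T^*|W|^2T$ against finite-rank $\gamma\ge 0$ gives (ii)$\Rightarrow$(i). Two points are worth making explicit. First, for $\alpha<\infty$, compactness of $T^*|W|^2T$ in (ii)$\Rightarrow$(i) follows directly from (ii), which justifies your spectral choice of $\gamma$: if $\mathbf{1}_{[\epsilon,\infty)}(T^*|W|^2T)$ had infinite rank, then $\gamma=k^{-1/\alpha'}P_k$, with $P_k$ a rank-$k$ projection inside its range, would give $\operatorname{Tr}(\gamma\,T^*|W|^2T)\ge \epsilon k^{1/\alpha}\to\infty$, contradicting (ii). Second, $WT$ is already bounded from $\cH$ into $L^2$ for every $W\in L^{2p'}_zL^{2q'}_t$ by H\"older, so you do not need to truncate $W$.
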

\subsection{Known results}
In the next lemma we note  the RHS is independent of phase  $\Psi$, and so we may say it is a phase removal lemma. This  will play a crucial role in the proof of \Cref{Necessary condition for proposition}.
\begin{knownlemma}[Lemma 2.2 in \cite{Pierce2020BourgainCounter}]\label{lem:vdc}
	Let $\mu\in L^1([a,b])$ and let $\Psi \in C^1([a,b];\R)$. Then 
	\[
	\int_a^b\mu(\lambda)\,e^{2\pi i\Psi(\lambda)}\,d\lambda=e^{2\pi i\Psi(b)}\int_a^b\mu(\lambda)\,d\lambda+E. 
	\]
	Moreover, the  constant $E$  satisfies the following estimate 
	\[\abs E\le 2\pi\,\norm{\mu}_{L^1[a,b]}\,\norm{\Psi'}_{L^\infty[a,b]}\,(b-a).\]
\end{knownlemma}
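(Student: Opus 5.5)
This is a short calculus lemma, and I would prove it by comparing the phase $e^{2\pi i\Psi(\lambda)}$ with its value at the right endpoint. Define the error directly by
\[
E:=\int_a^b\mu(\lambda)\bigl(e^{2\pi i\Psi(\lambda)}-e^{2\pi i\Psi(b)}\bigr)\,d\lambda .
\]
The identity $\int_a^b\mu e^{2\pi i\Psi}=e^{2\pi i\Psi(b)}\int_a^b\mu+E$ then holds tautologically, since $e^{2\pi i\Psi(b)}$ is a constant. Only the bound on $|E|$ requires work.

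To bound $E$, I will control the bracket pointwise. The map $\theta\mapsto e^{2\pi i\theta}$ on $\R$ is $2\pi$-Lipschitz, because its derivative has modulus $2\pi$. Hence
\[
\bigl|e^{2\pi i\Psi(\lambda)}-e^{2\pi i\Psi(b)}\bigr|\le 2\pi\,|\Psi(\lambda)-\Psi(b)|.
\]
Since $\Psi\in C^1([a,b];\R)$, the mean value theorem gives
\[
|\Psi(\lambda)-\Psi(b)|\le \|\Psi'\|_{L^\infty[a,b]}\,|b-\lambda|\le \|\Psi'\|_{L^\infty[a,b]}\,(b-a)
\qquad\text{for } \lambda\in[a,b].
\]
Equivalently, one can write the bracket as $\int_\lambda^b 2\pi i\Psi'(\tau)e^{2\pi i\Psi(\tau)}\,d\tau$ by the fundamental theorem of calculus and estimate the integrand in modulus.

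Inserting this uniform bound into the definition of $E$ and pulling the constant out of the integral gives
\[
|E|\le 2\pi\,\|\Psi'\|_{L^\infty}(b-a)\int_a^b|\mu| = 2\pi\,\|\mu\|_{L^1[a,b]}\,\|\Psi'\|_{L^\infty[a,b]}\,(b-a).
\]
The integral defining $E$ converges absolutely because $\mu\in L^1$ and the bracket is bounded, so every step is justified.

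There is no real obstacle here; the only point needing care is that $\Psi$ is real-valued, which is what makes the Lipschitz constant of $e^{2\pi i\cdot}$ equal to $2\pi$. No oscillation of $\mu$ is used, so the bound is useful only when $\|\Psi'\|_{L^\infty}(b-a)$ is small. That is exactly how it will be applied in Bourgain's counterexample construction: there the phase varies little over the support of the data, which lets it be removed at a cost.
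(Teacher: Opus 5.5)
Your proof is correct and complete. Defining $E:=\int_a^b\mu(\lambda)\bigl(e^{2\pi i\Psi(\lambda)}-e^{2\pi i\Psi(b)}\bigr)\,d\lambda$ makes the identity automatic. The pointwise bound $\bigl|e^{2\pi i\Psi(\lambda)}-e^{2\pi i\Psi(b)}\bigr|\le 2\pi\|\Psi'\|_{L^\infty[a,b]}(b-\lambda)$ then gives exactly the stated estimate. One small point: this bound comes from the fundamental theorem of calculus, as you also note, not from the mean value theorem applied to the complex-valued map $\theta\mapsto e^{2\pi i\theta}$, for which the MVT fails. Your Lipschitz constant $2\pi$ is still right.

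The paper gives no proof of its own; it quotes the lemma from Pierce, so there is no internal argument to compare against. A common alternative is integration by parts against $M(\lambda)=\int_a^\lambda\mu(\tau)\,d\tau$. This writes the integral as $M(b)e^{2\pi i\Psi(b)}-2\pi i\int_a^b M(\lambda)\Psi'(\lambda)e^{2\pi i\Psi(\lambda)}\,d\lambda$, and then uses $|M|\le\|\mu\|_{L^1[a,b]}$.

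Both routes give the same constant. In fact both give the slightly sharper bound $2\pi\|\Psi'\|_{L^\infty}\int_a^b|\mu(\lambda)|(b-\lambda)\,d\lambda$. Your pointwise version is more elementary. The integration-by-parts version could in principle also exploit cancellation in $M$. That extra flexibility is not needed where the paper uses the lemma, in the proof of \Cref{Necessary condition for proposition} with $\mu=\widehat\phi$ on $[-1,1]$. There only the crude bound matters.
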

The following lemma compares the Lebesgue measure of $X_Q$ in terms of the number of rational points involved and the sizes $h_1(Q)$ and $h_2(Q)$ of the corresponding boxes.
\begin{knownlemma}[Lemma A.1 in \cite{Eceizabarrena2025BourgainCounter}]\label{lem:A.1}
	Let $Q\ge Q_0$ for some absolute constant $Q_0$ depending only on $n$. Define
	\[
	X_Q=
	\bigcup_{\substack{q\sim Q\\ q\ \mathrm{odd}}}
	\;
	\bigcup_{\substack{(p_1,p')\in[0,q)^n\\ \gcd(p_1,q)=1}}
	B^1\!\left(\frac{p_1}{q},\,h_1(Q)\right)
	\times
	B^{n-1}\!\left(\frac{p'}{q},\,h_2(Q)\right),
	\]
	such that $h_1(Q)\sim 1/Q^\alpha$ and $h_2(Q)\sim 1/Q^\beta$ for some
	$\alpha,\beta\ge 1$. Then,
	\[
	|X_Q|
	\sim
	\frac{Q^{n+1}h_1(Q)h_2(Q)^{\,n-1}}
	{1+Q^{n+1}h_1(Q)h_2(Q)^{\,n-1}},
	\]
	where $|X_Q|$ is the Lebesgue measure of the set $X_Q.$
\end{knownlemma}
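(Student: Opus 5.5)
The plan is to bound $|X_Q|$ from above and below separately. Index the boxes by $i=(q,p_1,p')$ and write $B_i$ for the box attached to the point $(p_1/q,p'/q)$. Let $V:=h_1(Q)h_2(Q)^{n-1}$ be the common volume of a box and $\mathcal N$ the number of boxes. Then
\[
\mathcal N=\sum_{q\sim Q,\ q\text{ odd}}\varphi(q)\,q^{n-1}\sim Q^{n+1},
\]
using the standard fact $\sum_{q\sim Q,\,q\ \mathrm{odd}}\varphi(q)\sim Q^2$. The target is $|X_Q|\sim \mathcal N V/(1+\mathcal N V)$.

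\emph{Upper bound.} Since $h_1,h_2\lesssim 1$, all boxes lie in a fixed bounded cube, so $|X_Q|\lesssim 1$. Subadditivity gives $|X_Q|\le \mathcal N V$. Together, $|X_Q|\lesssim\min(1,\mathcal N V)\sim \mathcal NV/(1+\mathcal NV)$.

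\emph{Lower bound.} Set $F=\sum_i\mathbf 1_{B_i}$. By Cauchy--Schwarz on the support of $F$,
\[
|X_Q|\ \ge\ \frac{\big(\int F\big)^2}{\int F^2}=\frac{(\mathcal NV)^2}{\sum_{i,j}|B_i\cap B_j|}.
\]
The diagonal terms $i=j$ contribute $\mathcal NV$. So it suffices to prove the off-diagonal bound
\[
\sum_{i\neq j}|B_i\cap B_j|\lesssim (\mathcal NV)^2 .
\]
This would give $|X_Q|\gtrsim (\mathcal NV)^2/(\mathcal NV+(\mathcal NV)^2)=\mathcal NV/(1+\mathcal NV)$. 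Each intersection has measure at most $V$, so we need
\[
\#\{(i,j):\ i\ne j,\ B_i\cap B_j\neq\emptyset\}\lesssim \mathcal N^2V\sim Q^{2n+2}V .
\]
Heuristically this says the rational points are equidistributed at the scales $h_1,h_2$.

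\emph{Counting close pairs.} This is the main step. Fix $q,\tilde q\sim Q$ and let $g=\gcd(q,\tilde q)$. The first-coordinate condition is $|p_1/q-\tilde p_1/\tilde q|\le 2h_1$, i.e.
\[
|p_1\tilde q-\tilde p_1 q|\le 2h_1 q\tilde q\lesssim Q^2h_1 .
\]
The integer $p_1\tilde q-\tilde p_1q$ is a multiple of $g$. Moreover, by $\gcd(p_1,q)=\gcd(\tilde p_1,\tilde q)=1$, it vanishes only if $q=\tilde q$ and $p_1=\tilde p_1$. For each admissible value $k$, the pairs $(p_1,\tilde p_1)\in[0,q)\times[0,\tilde q)$ with $p_1\tilde q-\tilde p_1q=k$ number about $g$. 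Hence the number of first-coordinate pairs is $\lesssim g\,(1+Q^2h_1/g)$, and each of the $n-1$ remaining coordinates similarly gives $\lesssim g\,(1+Q^2h_2/g)$. Multiplying, the number of close pairs is at most
\[
\sum_{q,\tilde q\sim Q}\ (g+Q^2h_1)(g+Q^2h_2)^{n-1}.
\]
The leading term $Q^2\cdot Q^{2n}h_1h_2^{n-1}=Q^{2n+2}V$ is exactly the desired bound. The difficulty lies in the lower-order terms involving powers of $g$, together with the excluded zero case when $q=\tilde q$ but $p'\neq\tilde p'$. I would organize these by grouping the pairs $(q,\tilde q)$ according to $g$: there are $\lesssim (Q/g)^2$ pairs with a given $g$. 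The resulting sums $\sum_g (Q/g)^2 g^{a}(Q^2h)^{b}$ should then be compared with the diagonal term $\mathcal NV$ and with $(\mathcal NV)^2$. The hypothesis $\alpha,\beta\ge1$, i.e. $h_1,h_2\lesssim 1/Q$, keeps every box smaller than the spacing $1/q$ within a single denominator. Consequently, for $q=\tilde q$ the only close pairs are essentially diagonal ones and are absorbed into $\mathcal NV$. For $q\ne\tilde q$ the $g$-powered terms should be dominated by either $\mathcal NV$ or $(\mathcal NV)^2$ according to whether $\mathcal NV\lesssim1$ or $\mathcal NV\gtrsim1$.

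This gcd bookkeeping is where I expect the real work to be: a logarithmic loss would destroy the claimed two-sided $\sim$. If the crude count above is too lossy for large $g$, I would first try discarding the pairs with $g>Q^{\delta}$, whose total contribution is tiny, or else use the oddness and coprimality restrictions to sharpen the count of solutions of $p_1\tilde q-\tilde p_1q=k$. The threshold $Q_0$ only absorbs the implicit constants.
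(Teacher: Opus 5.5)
Your upper bound, the count $\mathcal N\sim Q^{n+1}$ and the Cauchy--Schwarz reduction $|X_Q|\ge(\int F)^2/\int F^2$ are fine. The off-diagonal count you defer, however, is the entire content of the lower bound, and for the full family of boxes it cannot be closed in general. The obstruction sits in the $p'$-coordinates, where nothing forbids exact coincidences. For $q\ne\tilde q$ there are exactly $g=\gcd(q,\tilde q)$ pairs with $p_j/q=\tilde p_j/\tilde q$, and $\sum_{q,\tilde q\sim Q}\gcd(q,\tilde q)\sim Q^2\log Q$.

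Concretely, take $n=2$, $\alpha=1$, $\beta\ge 2$, and write $F=\sum_q f_q(x_1)e_q(x_2)$. Then $\langle e_q,e_{\tilde q}\rangle\ge 2h_2\gcd(q,\tilde q)$. Since $Qh_1\sim 1$, a second Cauchy--Schwarz along the multiples of each $d\le Q/\log^3 Q$ gives $\sum_{q\ne\tilde q,\ d\mid q,\ d\mid\tilde q}\langle f_q,f_{\tilde q}\rangle\gtrsim (Q/d)^2(\varphi(d)/d)^2$. Expanding $\gcd(q,\tilde q)=\sum_{d\mid q,\,d\mid \tilde q}\varphi(d)$ then yields $\int F^2\gtrsim h_2Q^2\log Q\sim \mathcal N V\log Q$, while $\mathcal NV\sim Q^{2-\beta}\lesssim 1$. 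So your method gives at best $|X_Q|\gtrsim\mathcal NV/\log Q$. Yet the conclusion $|X_Q|\sim\mathcal NV$ is true here: fiber over the $\sim Q^2$ distinct reduced fractions in $x_2$.

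Your expectation that the $g$-terms are dominated by $\mathcal NV$ or $(\mathcal NV)^2$ is therefore false, and neither fallback rescues it. The loss is a harmonic sum in which every dyadic range of $g$ contributes equally, so discarding $g>Q^\delta$ still leaves $\delta\log Q$. Oddness of $q$ and $\gcd(p_1,q)=1$ say nothing about coincidences in $x_2$. This is not a corner case: it is exactly the regime in which the paper invokes the lemma for $n=m=1$, where $h_1=D^2/R^{3/2}\sim Q^{-1}$ and $h_2=D/R\sim Q^{-2}$.

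Separately, your first-coordinate bound $g+Q^2h_1$ wastes the fact you yourself noted. Since $p_1\tilde q-\tilde p_1 q$ is a \emph{nonzero} multiple of $g$, the count is $\lesssim Q^2h_1$ and vanishes unless $g\lesssim Q^2h_1$. The spurious $g$ costs another $\log Q$, for example when $\alpha\ge 2$ and $\beta=1$.

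The missing idea is to run Cauchy--Schwarz on a subfamily that has no coincidences at all. Keep only the boxes with $\gcd(p_j,q)=1$ for every $j=1,\dots,n$, and call their union $X_Q'\subset X_Q$. The mean of $\varphi(q)/q$ over odd $q\sim Q$ is $\gtrsim 1$, so Jensen gives $\sum_{q}\varphi(q)^n\gtrsim Q^{n+1}$, hence $\mathcal N'\sim\mathcal N$.

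For $q\ne\tilde q$, each coordinate now has $p_j\tilde q-\tilde p_j q$ a nonzero multiple of $g$ of size at most $2h_jq\tilde q$, with at most $g$ solutions per value. That gives $\lesssim Q^2h_j$ close pairs in coordinate $j$, and none if $g\gg Q^2h_j$. Multiplying over $j$ and summing over $(q,\tilde q)$ gives $\lesssim Q^{2n+2}h_1h_2^{n-1}\sim\mathcal N^2V$ close pairs, with no gcd bookkeeping. Pairs with $q=\tilde q$ number $O(\mathcal N)$, because $h_jq\lesssim 1$ by $\alpha,\beta\ge1$. Hence $\int (F')^2\lesssim \mathcal NV+(\mathcal NV)^2$, and therefore $|X_Q|\ge|X_Q'|\gtrsim \mathcal NV/(1+\mathcal NV)$.
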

Next lemma provides a bound for the supremum of $F$ in terms of its initial value and its $L^p$-norms, with the parameter $\mu$ allowing us to balance the contributions of $F$ and $F'.$
\begin{knownlemma} [Lemma 2.5 in \cite{Lee2006pointwiseR2}]\label{calculus lemma}
	Let \(F\) be a smooth function on the interval \([a,b]\) and let
	\(1\le p\le \infty\). Then for any \(\mu>0\),
	\[
	\sup_{t\in [a,b]} |F(t)|
	\le
	C\left(
	|F(a)|
	+
	\mu^{\frac1p-1}\|F'\|_{L^p([a,b])}
	+
	\mu^{\frac1p}\|F\|_{L^p([a,b])}
	\right),
	\]
	where \(C\) depends only on \(p\).
\end{knownlemma}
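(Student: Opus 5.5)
The plan is to compare $F(t)$ with its average over a short interval around $t$ and to control the difference by the fundamental theorem of calculus. Choose the interval length to be $1/\mu$ whenever $[a,b]$ is long enough; otherwise fall back on the anchor value $F(a)$.

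Fix $t\in[a,b]$ and $\mu>0$. For every $s\in[a,b]$ we have
\[
F(t)=F(s)+\int_s^t F'(\tau)\,d\tau .
\]
We split into two cases according to the size of $b-a$ relative to $1/\mu$.

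\emph{Case $b-a\le 1/\mu$.} Take $s=a$. Then $|F(t)|\le |F(a)|+\int_a^b|F'|$. Hölder's inequality gives
\[
\int_a^b|F'|\le (b-a)^{1-\frac1p}\|F'\|_{L^p([a,b])}.
\]
Since $1-\frac1p\ge 0$ and $b-a\le \mu^{-1}$, the right-hand side is at most $\mu^{\frac1p-1}\|F'\|_{L^p([a,b])}$. This gives the bound with $C=1$, using only the first two terms.

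\emph{Case $b-a>1/\mu$.} Choose a subinterval $J\subset[a,b]$ with $t\in J$ and $|J|=1/\mu$; this is possible because $[a,b]$ is longer than $1/\mu$. For every $s\in J$ we have $|F(t)|\le |F(s)|+\int_J|F'|$. Averaging this in $s$ over $J$ gives
\[
|F(t)|\le \frac{1}{|J|}\int_J|F|+\int_J|F'|.
\]
By Hölder, the two terms are bounded by $|J|^{-\frac1p}\|F\|_{L^p(J)}$ and $|J|^{1-\frac1p}\|F'\|_{L^p(J)}$ respectively. With $|J|=\mu^{-1}$ these are exactly $\mu^{\frac1p}\|F\|_{L^p([a,b])}$ and $\mu^{\frac1p-1}\|F'\|_{L^p([a,b])}$.

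Taking the supremum over $t$ and combining the two cases yields the inequality with $C=1$, uniformly in $p$. The case $p=\infty$ is covered by the same computation with $\frac1p=0$.

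The argument is elementary, and the only point needing care is the regime where the interval is shorter than $1/\mu$. There the averaging step would produce the factor $(b-a)^{-1/p}$, which is larger than $\mu^{1/p}$ and so gives no admissible bound. This is exactly why the term $|F(a)|$ appears, and the argument replaces averaging by anchoring at $a$ in that case.
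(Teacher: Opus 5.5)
Your argument is correct, and it actually gives the inequality with $C=1$ for every $1\le p\le\infty$. When $b-a\le 1/\mu$, anchoring at $a$ plus H\"older suffices. Otherwise, averaging $F(t)=F(s)+\int_s^tF'$ over a window $J\ni t$ of length $1/\mu$ produces exactly the weights $\mu^{1/p}$ and $\mu^{1/p-1}$.

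The paper itself gives no proof of this lemma; it is quoted from Lee. So there is no internal argument to match, but the argument usually given for this inequality takes a different, global route. One applies the fundamental theorem of calculus to $|F|^p$ and uses H\"older to get $\sup_{[a,b]}|F|^p\le |F(a)|^p+p\,\|F\|_{L^p}^{p-1}\|F'\|_{L^p}$. One then writes the product as $(\mu^{1/p}\|F\|_{L^p})^{p-1}(\mu^{1/p-1}\|F'\|_{L^p})$, which is legitimate because $\mu^{(p-1)/p}\mu^{1/p-1}=1$, and splits it with Young's inequality. That route needs no case distinction, and $\mu$ enters only as a balancing parameter. On the other hand, the endpoint $p=\infty$ must be treated separately (trivially), and one must note that $|F|^p$ is absolutely continuous for complex-valued $F$ and $p\ge1$.

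Your route buys a slightly stronger, localized statement. When $b-a>1/\mu$, $|F(t)|$ is controlled by the $L^p$ norms of $F$ and $F'$ on a window of length $1/\mu$ around $t$ alone, and $|F(a)|$ is needed only in the short-interval regime, as you correctly observe. It also makes the meaning of $\mu$ transparent. In the proof of \Cref{max stri for wave}, where $\mu=N^2$, the window $N^{-2}$ is the time scale on which $e^{it\Delta}P_Nf$ is essentially constant, because its time derivative carries a factor of size $N^2$.
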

The next proposition is known as the  diagonal Strichartz estimate, which is a consequence of the famous $\ell^2$-decoupling inequality for the waveguide manifold, as studied by Barron \cite{BarronAlex}.
\begin{knownproposition}[Proposition 3.4 in \cite{BarronAlex} and Theorem 1.10 in \cite{bhimani2025orthonormal}]\label{diag strichartz}
	Let $I \subset \R$ be any bounded interval and
	\[
	p=\frac{2(n+m+2)}{n+m}.
	\]
	 Then for any $ \eps >0,$ one has 
	\[
	\left\|
	e^{it\Delta}
	P_{\le N}f
	\right\|_{L^p(\mathbb R^n\times\mathbb T^m\times I)}
	\lesssim_{\varepsilon}
	N^{\varepsilon}
	\|f\|_{L^2(\mathbb R^n\times\mathbb T^m)}.
	\]
\end{knownproposition}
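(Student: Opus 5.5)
The plan is to deduce the estimate from the Bourgain--Demeter $\ell^2$-decoupling theorem for the paraboloid in $\R^{d+1}$, $d=n+m$, at the critical exponent $p=\frac{2(d+2)}{d}$. We treat the torus variable $y$ by periodic extension and the Euclidean variable $x$ by localisation to balls. By translation invariance in time and the fact that a bounded $I$ is covered by $O_I(1)$ unit intervals, we may take $I=[0,1]$. Write
\[
F(x,y,t)=e^{it\Delta}P_{\le N}f(x,y)=\sum_{\xi_2\in\Z^m}\int_{\R^n}\eta(\xi/N)\widehat f(\xi_1,\xi_2)\,e^{2\pi i(x\cdot\xi_1+y\cdot\xi_2+t|\xi|^2)}\,d\xi_1 .
\]
This is an extension-type function whose space-time Fourier transform lives on the paraboloid $\{(\xi,|\xi|^2):|\xi|\lesssim N\}$, restricted to $\xi\in\R^n\times\Z^m$.

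\textbf{Step 1: frequency decomposition.} Partition the frequency ball $\{|\xi|\lesssim N\}\subset\R^{d}$ into unit cubes $\theta$, and write $F=\sum_\theta F_\theta$ with $F_\theta=e^{it\Delta}f_\theta$ and $\widehat{f_\theta}=\mathbf 1_\theta\,\eta(\cdot/N)\widehat f$. After the parabolic rescaling $\xi\mapsto\xi/N$, these become caps of size $N^{-1}$ on the unit paraboloid. Decoupling for $\delta=N^{-2}$ therefore applies on spatial regions of size $N^2$ in the rescaled variables, i.e.\ on boxes $Q=B^n(x_0,N)\times[0,N]^m\times[0,1]$ in the original variables. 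Since $F$ is $1$-periodic in $y$, the $L^p$ norm over $[0,N]^m$ in $y$ equals $N^m$ times the norm over $\T^m$, and the same factor appears on the right for each $F_\theta$, so it cancels. For each ball $B=B^n(x_0,N)$ this gives
\[
\|F\|_{L^p(B\times\T^m\times[0,1])}\lesssim_\varepsilon N^{\varepsilon}\Big(\sum_\theta\|F_\theta\|^2_{L^p(w_B\times\T^m\times[0,1])}^{}\Big)^{1/2},
\]
where $w_B$ is a weight adapted to $B$ with rapidly decaying tails. We use the version of decoupling with weights in $x$ only, which is legitimate because the time region $[0,1]$ is exactly the $N^2$-scale in rescaled time.

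\textbf{Step 2: summing over balls and bounding pieces.} Raise the local estimate to the power $p$ and sum over a finitely overlapping cover of $\R^n$ by balls $B$. Since $p/2\ge1$, Minkowski's inequality in $\ell^{p/2}$ lets us exchange the $\ell^2_\theta$ and $\ell^p_B$ sums, and the decaying weights sum to $\lesssim1$. This yields
\[
\|F\|_{L^p(\R^n\times\T^m\times[0,1])}\lesssim_\varepsilon N^\varepsilon\Big(\sum_\theta\|F_\theta\|^2_{L^p(\R^n\times\T^m\times[0,1])}\Big)^{1/2}.
\]
For each fixed $t$, $F_\theta(\cdot,t)$ has Fourier support in a unit cube of $\R^n\times\Z^m$, which contains $O(1)$ lattice values of $\xi_2$. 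Bernstein's inequality \eqref{Bernstein} (or simply Young's inequality for a unit-scale multiplier) therefore gives $\|F_\theta(t)\|_{L^p_{x,y}}\lesssim\|F_\theta(t)\|_{L^2_{x,y}}=\|f_\theta\|_{L^2}$, using unitarity of $e^{it\Delta}$. Integrating over $t\in[0,1]$ gives $\|F_\theta\|_{L^p}\lesssim\|f_\theta\|_{L^2}$, and Plancherel orthogonality $\sum_\theta\|f_\theta\|_2^2\le\|f\|_2^2$ finishes the argument.

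\textbf{Main obstacle.} The delicate point is Step 1: justifying decoupling when one frequency direction is discrete and the corresponding physical variable is periodic. The approach is to view $F$ on $[0,N]^m$ in $y$, which is $N^m$ copies of the torus. Then $F$ is a genuine function on a box of side $\sim N$ (and $N^2$ after rescaling) whose Fourier transform is a measure supported in the $N^{-2}$-neighbourhood of the paraboloid. This is exactly the hypothesis of Bourgain--Demeter, because decoupling only needs the Fourier support condition, not absolute continuity, after smoothing by the weight. One must also check that the $N^m$ normalisations on both sides match; they do, since every $F_\theta$ is also $1$-periodic in $y$. With this in place the remaining steps are routine, and the loss is only the $N^\varepsilon$ inherited from decoupling.
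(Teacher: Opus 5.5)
Your argument is correct and is the standard $\ell^2$-decoupling proof: periodize in $y$ over $[0,N]^m$, decouple at scale $\delta=N^{-2}$ on boxes $B^n(x_0,N)\times[0,N]^m\times[0,1]$, sum over the $x$-balls, and bound each unit-cube piece by Bernstein. This is the route the paper indicates by citing Barron; the paper itself gives no proof beyond that citation.

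One small point: your claim that decoupling can be used \emph{with weights in $x$ only} is not justified by what you say, and you don't need it. Keep the standard space-time weight: your per-cube bound $\|F_\theta(t)\|_{L^p_{x,y}}\lesssim\|f_\theta\|_{L^2}$ holds uniformly in $t\in\mathbb R$, so the time tails of the weight integrate to $O(1)$.
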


\section{Important Lemmas}\label{imp-lem}
\subsection{Maximal estimates implies pointwise convergence}

As an application of Littlewood-Paley theory, we establish the following lemma. This states that if one can control the Schr\"odinger maximal function at each frequency scale with a loss of about $N^{s_0+\varepsilon}$ for $\varepsilon>0$ (see \eqref{AR stri}), then one automatically obtains a global bound (\eqref{gbf}) for all functions in the Sobolev space $H^s$, provided that $s > s_0$.
\begin{lemma}\label{ulc}
	Let $p \ge 1$ and $s_0 \geq0.$ Suppose  for any $\varepsilon > 0$, there is a constant $C_\varepsilon$ such that
	\begin{equation}\label{AR stri}
		\left\| \sup_{0 < t \le 1} \left| e^{it\Delta} f \right| \right\|_{L^p(B^n(0,1) \times \mathbb{T}^m)}
		\le C_\varepsilon N^{s_0 + \varepsilon} \|f\|_{L^2(\mathbb{R}^n \times \T^m)} ,
	\end{equation}
	holds for any $N \ge 1$, and any function $f \in L^2(\mathbb{R}^n \times \T^m)$ with $\operatorname{supp} \hat{f} \subset A_{\R^n \times \T^m}(N).$ Then
	\begin{equation}\label{gbf}
		\left\| \sup_{0 < t \le 1} \left| e^{it\Delta} f \right| \right\|_{L^p(B^n(0,1)\times \mathbb{T}^m)}
		\le C_s \|f\|_{H^s(\mathbb{R}^n \times \mathbb{T}^m)}
	\end{equation}
	holds for any function $f \in H^s(\mathbb{R}^n \times \T^m)$ with $s > s_0$.
\end{lemma}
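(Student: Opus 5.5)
We decompose $f$ into Littlewood--Paley pieces and sum the single-scale bounds \eqref{AR stri} using the triangle inequality. First assume $f$ is a Schwartz-class function, or at least has compactly supported Fourier transform, so that $e^{it\Delta}f$ is smooth and the supremum is measurable; the general case follows at the end by density. By \eqref{lpope} and $\sum_N\psi_N\equiv 1$, we write $f=\sum_{N\in 2^{\mathbb N}}P_Nf$. Then, pointwise in $(x,y)$ and for each $t$,
\[
\bigl|e^{it\Delta}f\bigr|\le \sum_{N}\bigl|e^{it\Delta}P_Nf\bigr| ,
\]
so that $\sup_{0<t\le1}|e^{it\Delta}f|\le\sum_N\sup_{0<t\le1}|e^{it\Delta}P_Nf|$. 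Since $p\ge1$, Minkowski's inequality in $L^p(B^n(0,1)\times\T^m)$ gives
\[
\Bigl\|\sup_{0<t\le1}|e^{it\Delta}f|\Bigr\|_{L^p}\le\sum_N\Bigl\|\sup_{0<t\le1}|e^{it\Delta}P_Nf|\Bigr\|_{L^p}.
\]

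Each $P_Nf$ has Fourier support in $\{|\xi|\sim N\}=A_{\R^n\times\T^m}(N)$; for $N=1$ this is $\{|\xi|\lesssim1\}$, and we read $A(1)$ as including it, or apply the hypothesis with a comparable constant. Fix $\varepsilon>0$ with $s_0+\varepsilon<s$ and apply \eqref{AR stri} to each piece. Using $\|P_Nf\|_{L^2}\lesssim N^{-s}\|P_Nf\|_{H^s}\le N^{-s}\|f\|_{H^s}$, which holds because $\langle\xi\rangle\sim N$ on the support of $\psi_N$, the sum is bounded by
\[
C_\varepsilon\sum_{N\in2^{\mathbb N}}N^{s_0+\varepsilon-s}\,\|f\|_{H^s}\le C_s\|f\|_{H^s}.
\]
This is a convergent geometric series, so \eqref{gbf} follows. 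If a sharper bound is wanted, one can use Cauchy--Schwarz together with the almost orthogonality $\sum_N\|P_Nf\|_{H^s}^2\lesssim\|f\|_{H^s}^2$, but the crude bound already suffices.

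The only genuinely delicate point is extending from nice $f$ to all of $H^s$, which requires defining $\sup_t|e^{it\Delta}f|$ sensibly. Take $f_k$ with compact Fourier support converging to $f$ in $H^s$. The estimate for the nice functions shows that $\sup_t|e^{it\Delta}(f_k-f_l)|\to0$ in $L^p$. Hence $e^{it\Delta}f_k$ converges uniformly in $t$ for a.e. $(x,y)$ along a subsequence. The limit agrees with $e^{it\Delta}f$ for each fixed $t$ almost everywhere, because $e^{it\Delta}$ is unitary on $L^2$. Taking the supremum over rational $t$, or using continuity, then gives \eqref{gbf} in general by Fatou's lemma.
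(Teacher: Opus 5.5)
Your proposal is correct and follows the paper's proof essentially verbatim: Littlewood--Paley decomposition, the pointwise triangle inequality followed by Minkowski in $L^p$, the single-scale hypothesis applied to each $P_Nf$ together with $\|P_Nf\|_{L^2}\lesssim N^{-s}\|f\|_{H^s}$, and summation of the geometric series $\sum_N N^{s_0+\varepsilon-s}$. For the $N=1$ piece you follow the paper and read $A(1)$ as containing the low frequencies; a cleaner justification is the trivial bound $\sup_t|e^{it\Delta}P_1f|\le\|\widehat{P_1f}\|_{L^1}\lesssim\|f\|_{L^2}$. Your closing density/subsequence argument, which makes the maximal function well defined for general $H^s$ data, is a correct addition that the paper leaves implicit.
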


\begin{proof}
	Let $s > s_0,$  $f \in H^s(\mathbb{R}^n \times \mathbb{T}^m)$, we choose $\varepsilon > 0$ such that $s > s_0 + \varepsilon$. We decompose $f$ in a Littlewood--Paley decomposition:
	\[
	f = \sum_{N \in 2^{\mathbb{N}}} P_Nf.
	\]
	By the hypothesis, we  have 
	\begin{equation}\label{f0 stri}
		\left\| \sup_{0 < t \le 1} \left| e^{it\Delta} P_1f \right| \right\|_{L^p(B^n(0,1) \times \mathbb{T}^m)}
		\lesssim \|P_1f\|_{L^2(\R^n \times \T^m)}.
	\end{equation}
	Simple calculation gives
	\[
	\|P_Nf\|_{L^2(\R^n \times \T^m)} \lesssim N^{-s} \|f\|_{H^s(\R^n \times \T^m)}.
	\]
	Applying \eqref{AR stri} to each $P_Nf$,  and using the triangle inequality, we get
	\begin{align*}
		\left\| \sup_{0 < t \le 1} \left| e^{it\Delta} f \right| \right\|_{L^p(B^n(0,1) \times \mathbb{T}^m)}
		&\lesssim \sum_{N \in 2^{\mathbb{N}},\, N>1} N^{s_0 + \varepsilon - s} \|f\|_{H^s(\R^n \times \T^m)}
		+ \|P_1f\|_{L^2(\R^n \times \T^m)} \\
		&\lesssim \|f\|_{H^s(\R^n \times \T^m)},
	\end{align*}
	as desired.
\end{proof}

Next \Cref{application of maximal strichartz} states that  maximal estimate for the Schr\"odinger evolution on waveguide manifold implies almost everywhere pointwise convergence to the initial data. Our method of proof is inspired from the Euclidean case, see \cite[Theorem 5]{Sjolin1987regularity}. Indeed, this follows from a density and measure estimate approach. We first prove uniform convergence for Schwartz functions with the aid of Fourier decay, and then use approximation to prove the convergence for arbitrary $H^s$ functions. The ingredients are Chebyshev's inequality, the given maximal estimate.
\begin{lemma}\label{application of maximal strichartz}
	Suppose that for some $s > 0$, and some $p \ge 1$,
	\begin{equation}\label{max Hs wave}
		\left\| \sup_{0 < t \le 1} |e^{it\Delta} f| \right\|_{L^p(B^n(0,1) \times \mathbb{T}^m)}
		\le C_s \|f\|_{H^s(\mathbb{R}^n \times \mathbb{T}^m)}
	\end{equation}
	holds for any function $f \in H^s(\mathbb{R}^n \times \mathbb{T}^m)$. Then
	\begin{equation}\label{pointwise as consequence of max Hs wave}
		\lim_{t \to 0} e^{it\Delta} f(x,y) = f(x,y)
		\quad \text{for a.e } (x,y) \in  \mathbb{R}^n \times \mathbb{T}^m.
	\end{equation}
\end{lemma}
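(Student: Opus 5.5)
We follow the classical density argument, as in \cite[Theorem 5]{Sjolin1987regularity}. The set where convergence fails is contained in a countable union of translates of $B^n(0,1)\times\T^m$ (namely $B^n(k,1)\times\T^m$, $k\in\Z^n$). The propagator commutes with translations in $x$, and translation preserves the $H^s$ norm. So \eqref{max Hs wave} holds with $B^n(0,1)$ replaced by any $B^n(k,1)$ with the same constant, and it suffices to prove convergence a.e. on $B:=B^n(0,1)\times\T^m$.

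\textbf{Step 1 (Schwartz data).} For $g$ a Schwartz function on $\R^n\times\T^m$, i.e. $\widehat g$ rapidly decaying on $\R^n\times\Z^m$, we have
\[
\abs{e^{it\Delta}g(x,y)-g(x,y)}\le \int_{\hM}\abs{e^{2\pi it\abs{\xi}^2}-1}\,\abs{\widehat g(\xi)}\,d\xi\le 2\pi \abs t\int_{\hM}\abs{\xi}^2\abs{\widehat g(\xi)}\,d\xi .
\]
Here $d\xi$ is Lebesgue measure times counting measure. This bound tends to $0$ uniformly in $(x,y)$, so $e^{it\Delta}g\to g$ everywhere. Schwartz functions are dense in $H^s(\R^n\times\T^m)$, since truncating and mollifying on the Fourier side approximates $\langle\xi\rangle^s\widehat f$ in $L^2(\hM)$.

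\textbf{Step 2 (approximation and Chebyshev).} Fix $f\in H^s$ and $\lambda>0$, and define
\[
E_\lambda=\Big\{(x,y)\in B:\ \limsup_{t\to0}\abs{e^{it\Delta}f(x,y)-f(x,y)}>\lambda\Big\}.
\]
Choose a Schwartz function $g$ with $\norm{f-g}_{H^s}<\delta$, and set $h=f-g$. By Step 1,
\[
\limsup_{t\to0}\abs{e^{it\Delta}f-f}\le \sup_{0<t\le1}\abs{e^{it\Delta}h}+\abs h .
\]
Hence $E_\lambda\subset\{\sup_t\abs{e^{it\Delta}h}>\lambda/2\}\cup\{\abs h>\lambda/2\}$. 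Chebyshev's inequality together with \eqref{max Hs wave} bounds the first set's measure by $(2C_s\delta/\lambda)^p$. For the second set, Chebyshev in $L^2$ and $s>0$ give the bound $(2/\lambda)^2\norm h_{L^2}^2\le 4\delta^2/\lambda^2$. Letting $\delta\to0$ gives $\abs{E_\lambda}=0$, and taking the union over $\lambda=1/k$ finishes the proof.

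\textbf{Main obstacle.} Measurability of $\sup_{0<t\le1}\abs{e^{it\Delta}h}$ and the precise meaning of $e^{it\Delta}h(x,y)$ pointwise for rough $h$ need care. We would first work with $h$ in a dense class where $t\mapsto e^{it\Delta}h$ is continuous, so that the supremum may be taken over rational $t$. For general $h$, the maximal operator is then defined by extension using \eqref{max Hs wave}, and one interprets $e^{it\Delta}f$ as its continuous-in-$t$ representative along a.e. fibre. The remaining steps are routine.
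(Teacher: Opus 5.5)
Your proposal is correct and is essentially the paper's proof. It has the same ingredients: uniform convergence for Schwartz data, the splitting $f=g+h$ with $\limsup_{t\to0}\abs{e^{it\Delta}f-f}\le\sup_{0<t\le1}\abs{e^{it\Delta}h}+\abs{h}$, Chebyshev applied with the assumed maximal bound and with the $L^2$ bound, and transfer to translated sets $B^n(x_0,1)\times\T^m$. The paper carries out that transfer via the modulation $\widehat{g}=e^{2\pi i x_0\cdot\xi}\widehat{f}$, which is the same as your translation invariance.

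One small fix is needed: the balls $B^n(k,1)$ with $k\in\Z^n$ do not cover $\R^n$ once $n\ge 5$. The point $(\tfrac12,\dots,\tfrac12)$ lies at distance $\sqrt{n}/2>1$ from $\Z^n$, and the uncovered set has positive measure. Take centres in $\tfrac{1}{\sqrt{n}}\Z^n$, or any countable dense set, instead.
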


\begin{proof}
	Let $\mathcal{D}(\R^n \times \T^m)$ denote the set of all smooth functions $f$ on $\R^n \times \T^m,$ satisfying
	\[
	\sup_{x\in\R^n,\, y\in\T^m}
	\langle x\rangle^{L}
	\abs{\partial_x^{a} \partial_y^{b} f(x,y)} < \infty,
	\]
	for all $L \in \mathbb{N}_0=\mathbb{N} \cup \{0\}$ and all multi-indices $a \in \mathbb{N}_0^n$, $b \in \mathbb{N}_0^m.$
	First, we show that if $f \in \mathcal{D}(\R^n \times \T^m),$ then $e^{it\Delta} f(x,y) \to f(x,y)$ uniformly on $\mathbb{R}^n \times \mathbb{T}^m$, as $t \to 0.$ Note that
	\begin{align*}
		|e^{it\Delta} f(x,y) - f(x,y)|
		&= \left| \sum_{k \in \Z^m} \int_{\R^n} 
		e^{2\pi i (x\cdot \xi + y\cdot k)} 
		\widehat{f}(\xi,k)\big(e^{2\pi i t(|\xi|^2 + |k|^2)} - 1\big)\, d\xi \right| \\
		&\lesssim |t| \sum_{k \in \Z^m} \int_{\R^n} |\widehat{f}(\xi,k)|\, (|\xi|^2 + |k|^2) \, d\xi \\
		&\le |t| \,
		\norm{\widehat{f}(\xi,k)(|\xi|^2 + |k|^2)(1+|\xi|^2+|k|^2)^d}_{L^{\infty}} \\
		&\quad \times
		\sum_{k \in \Z^m} \int_{\R^n} (1+|\xi|^2+|k|^2)^{-d} \, d\xi .
	\end{align*}
	For $f \in \mathcal{D}(\R^n \times \T^m),$ its Fourier transform $\widehat{f}$ is also a Schwartz function in the first component and has faster decay than any polynomial. Hence,
	\[
	\|\widehat{f}(\xi,k)(|\xi|^2+|k|^2) (1+|\xi|^2+|k|^2)^d\|_{L^\infty (\R^n \times \Z^m)} < \infty.
	\]
	
	Thus, it follows that $|e^{it\Delta} f(x,y) - f(x,y)| \le C |t|$. Therefore, the uniform convergence is justified for Schwartz functions on $\mathbb{R}^n \times \mathbb{T}^m$.
	
	Given any function $f \in H^s(\mathbb{R}^n \times \mathbb{T}^m)$, since $\mathcal{D}(\R^n \times \T^m)$ is dense in $H^s(\R^n \times \T^m)$, for any $\varepsilon > 0$, we write
	\[
	f = g + h,
	\]
	where $g$ is Schwartz and $\|h\|_{H^s(\mathbb{R}^n \times \mathbb{T}^m)} < \varepsilon$. Then we get
	\begin{align*}
		\limsup_{t\to 0} \abs{e^{it\Delta} f(x,y) - f(x,y)}
		&\le \limsup_{t\to 0} \abs{e^{it\Delta} g(x,y) - g(x,y)} \\
		&\quad + \limsup_{t\to 0}\abs{e^{it\Delta} h(x,y) - h(x,y)} \\
		&\le 0 + \sup_{0<t\le1} \abs{e^{it\Delta} h(x,y)} + \abs{h(x,y)}.
	\end{align*}
	Let $X = B^n(0,1)\times \T^m.$ For any $\alpha > 0,$ denote
	\[
	E_\alpha := \left\{ (x,y) \in X : \limsup_{t\to 0} \abs{e^{it\Delta} f(x,y) - f(x,y)} > \alpha \right\}.
	\]
	We bound $|E_\alpha|$, the Lebesgue measure of $E_\alpha$, by
	\[
	\left| \left\{ (x,y) \in X : \sup_{0<t\le1} \abs{e^{it\Delta} h(x,y)} > \alpha/2 \right\} \right|
	+ \left| \left\{ (x,y) \in X : \abs{h(x,y)} > \alpha/2 \right\} \right|.
	\]
	Moreover by Chebyshev's inequality and assumption \eqref{max Hs wave}, we have
	\begin{align*}
		\left| \left\{ (x,y) \in X : \sup_{0<t\le1} \abs{e^{it\Delta} h(x,y)} > \alpha/2 \right\} \right|
		&\le \frac{\norm{\sup_{0<t\le1} \abs{e^{it\Delta} h}}_{L^p(X)}^p}{(\alpha/2)^p} \\
		&\le \frac{C_s^p \norm{h}_{H^s(\R^n \times \T^m)}^p}{(\alpha/2)^p}
		\lesssim \frac{\varepsilon^p}{\alpha^p},
	\end{align*}
	and
	\begin{align*}
		\left| \left\{ (x,y) \in X : \abs{h(x,y)} > \alpha/2 \right\} \right|
		&\le \frac{\norm{h}_2^2}{(\alpha/2)^2}
		\le \frac{\norm{h}_{H^s(\R^n \times \T^m)}^2}{(\alpha/2)^2}
		\lesssim \frac{\varepsilon^2}{\alpha^2}.
	\end{align*}
	Therefore, for any $\alpha > 0$ the bound
	\[
	|E_\alpha| \lesssim \frac{\varepsilon^p}{\alpha^p} + \frac{\varepsilon^2}{\alpha^2},
	\]
	holds for any $\varepsilon > 0$. We conclude that $|E_\alpha| = 0$ for any $\alpha > 0$. We take the set $\bigcup_{k=1}^{\infty} E_{1/k}$, which has measure zero and
	\[
	\lim_{t\to0} e^{it\Delta} f(x,y) = f(x,y)
	\quad \text{for any } (x,y) \in X \setminus \bigcup_{k=1}^{\infty} E_{1/k}.
	\]
	We have proved the almost everywhere pointwise convergence in $X$. The same argument applies to any other set of this kind $B^n(x_0,1)\times \T^m$, via the following observation: we write $x = x_0 + x_1 \in B^n(x_0,1)$, where $x_1 \in B^n(0,1).$ Indeed, keeping $y \in \T^m$ fixed,  we have 
	\[
	e^{it\Delta} f(x,y)
	= \sum_{k\in\Z^m} \int_{\R^n} 
	e^{2\pi i (x_1\cdot \xi + y\cdot k + t(|\xi|^2+|k|^2))} 
	e^{2\pi i x_0\cdot \xi} \widehat{f}(\xi,k)\, d\xi
	=: e^{it\Delta} g(x_1,y),
	\]
	where $\widehat{g}(\xi,k) = e^{2 \pi ix_0\cdot \xi} \widehat{f}(\xi,k)$.  Hence,
	\[
	\norm{ \sup_{0<t\le1} \abs{e^{it\Delta} f} }_{L^p(B^n(x_0,1)\times \T^m)}
	=
	\norm{ \sup_{0<t\le1} \abs{e^{it\Delta} g} }_{L^p(X)}
	\le C_s \norm{g}_{H^s(\R^n \times \T^m)}
	= C_s \norm{f}_{H^s(\R^n \times \T^m)}.
	\]
	Therefore, $\lim_{t \to 0} e^{it\Delta} f(x,y) = f(x,y)$ for almost every $(x,y) \in \R^n \times \T^m$.
\end{proof}

\subsection{Lieb--Sobolev inequality on waveguide manifold} 
In this subsection, we aim to prove the following Lieb--Sobolev inequality (\Cref{lem:lieb-sobolev-product}) on the waveguide manifold. Indeed, this  is an orthonormal (density-matrix) analogue of the classical Sobolev embedding 
\begin{equation}\label{soem}
	H^s(\M)\hookrightarrow L^{2p}(\M) \quad \text{for} \ s \geq \frac{d}{2p'}, \ \  s  \in \left(0,\frac{d}{2} \right)
\end{equation}
It bounds, in a single $L^p$ norm, the density built from an entire orthonormal family $(f_j)_j$ weighted by $\lambda\in\ell^{p,1}$ (Lorentz sequence space, see \eqref{Lorentz sequence space}),  rather than a single function. We use it in \Cref{sec7} to control  the  sequence of density functions (see \eqref{e:lieb-sobolev}). Hence to define the density functions $\rho_{\gamma_0}$ and $\rho_{\gamma(t)}$ for general $\gamma_0\in\Sp^{\alpha',s}$, and  not merely finite-rank $\gamma_0$. 
Inequalities of this type for orthonormal systems trace back to Lieb \cite{Lieb1983orthonormalsobolev}.  See also Frank--Sabin \cite{frank2017restriction}.

\begin{proposition}[Lieb--Sobolev inequality on $\M$]\label{lem:lieb-sobolev-product}
	Let $d\ge 1,$ $1<p<\infty$ and $\frac{d}{2p'} \leq s < \frac{d}{2}$. Then
	\[
	\left\| \sum_j \lambda_j \left| \langle \nabla \rangle^{-s} f_j \right|^2 \right\|_{L^p(\M)}
	\lesssim \norm{\lambda}_{\ell^{p,1}}
	\]
	holds for every orthonormal system $(f_j)_j$ in $L^2(\M)$ and every $\lambda=(\lambda_j)_j \in \ell^{p,1}$.
\end{proposition}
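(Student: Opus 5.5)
We prove \Cref{lem:lieb-sobolev-product} by combining the duality principle with a Cwikel-type weak Schatten bound, followed by real interpolation in the exponent pair. The core object is the operator $T=\langle\nabla\rangle^{-s}$, for which $\sum_j\lambda_j|Tf_j|^2=\rho_{T\gamma T^*}$ with $\gamma=\sum_j\lambda_j|f_j\rangle\langle f_j|$.

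\emph{Duality.} By the time-independent analogue of \Cref{PL1} (same proof, with no $t$-variable), the claimed estimate with $\|\lambda\|_{\ell^{p,1}}$ on the right is equivalent to a weak Schatten bound
\[
\|W\langle\nabla\rangle^{-2s}\overline W\|_{\Sp_{p',\infty}}\lesssim\|W\|^2_{L^{2p'}(\M)}.
\]
This holds because $\ell^{p,1}$ is dual to $\ell^{p',\infty}$. Equivalently, it suffices to show
\[
\|W\langle\nabla\rangle^{-s}\|_{\Sp_{2p',\infty}}\lesssim\|W\|_{L^{2p'}(\M)}.
\]
In that formulation, $\mathrm{Tr}\big(\gamma\,T^*|W|^2T\big)=\int|W|^2\rho$, and the Hölder inequality $|\mathrm{Tr}(AB)|\le\|A\|_{\Sp_{p,1}}\|B\|_{\Sp_{p',\infty}}$ together with $\|\gamma\|_{\Sp_{p,1}}\le\|\lambda\|_{\ell^{p,1}}$ gives the result after taking the supremum over $W$.

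\emph{Cwikel bound.} We apply Cwikel's estimate in the form of Frank \cite{Frank2014Cwikel}. For $2<r<\infty$ (here $r=2p'$),
\[
\|f(z)\,g(-i\nabla)\|_{\Sp_{r,\infty}}\lesssim\|f\|_{L^r(\M)}\|g\|_{L^{r,\infty}(\hM)}.
\]
This is where we must check that the abstract Cwikel theorem applies on $\M$. Frank's formulation (and the original one, via Birman--Solomyak) works on any pair of measure spaces linked by a unitary Fourier transform mapping $L^1\to L^\infty$ and $L^2\to L^2$. That is precisely our situation for $\M=\R^d,\T^d,\R^n\times\T^m$ with $\hM$ carrying Lebesgue$\times$counting measure. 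The abstract proof only uses that $\|f\,g(-i\nabla)\|_{\Sp^2}=\|f\|_{L^2}\|g\|_{L^2(\hM)}$ and the trivial $L^\infty$ operator bound, followed by a dyadic level-set decomposition. We will verify these two identities on $\M$ explicitly.

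With $g(\xi)=\langle\xi\rangle^{-s}$, we need $g\in L^{2p',\infty}(\hM)$. The level sets are $\{\langle\xi\rangle^{-s}>\kappa\}=\{|\xi|\lesssim\kappa^{-1/s}\}$. For $\kappa\le1$ their measure is $\lesssim\kappa^{-d/s}$; this holds also for lattice and mixed duals, since the count of lattice points in a ball of radius $R\ge1$ is $\lesssim R^d$. For $\kappa>1$ the level set is empty. Hence $g\in L^{d/s,\infty}\cap L^\infty\subset L^{2p',\infty}$ provided $2p'\ge d/s$, i.e.\ $s\ge\frac{d}{2p'}$. The inhomogeneous weight is what allows $>$ in the non-endpoint case and removes the low-frequency issue present for $|\xi|^{-s}$. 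The restriction $s<\frac d2$ ensures $2p'>2$, i.e.\ $p>1$ is consistent, which is the range where Cwikel holds ($r>2$). The case $r=2$ would be Hilbert--Schmidt and fails in weak form.

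\emph{Main obstacle.} The main obstacle is the justification of Cwikel's inequality on the lattice and mixed duals, specifically the measure-theoretic step where the weak-$L^r$ norm on $\hM$ is compared with dyadic pieces. We will handle it by following Frank's proof verbatim, noting that it never uses dilation structure.

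Finally, we sum over general $\lambda$ with signs by splitting into positive and negative parts, and pass from finite to infinite orthonormal systems by monotone convergence.
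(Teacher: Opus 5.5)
Your proposal is correct and is essentially the paper's argument: you test against $V=|W|^2\in L^{p'}$ and write $\int GV=\Tr(\gamma\,T^*VT)$ with $\gamma=\sum_j\lambda_j|f_j\rangle\langle f_j|$. You get $\|W\langle\nabla\rangle^{-s}\|_{\Sp_{2p',\infty}}\lesssim\|W\|_{L^{2p'}}$ from Frank's abstract Cwikel theorem with $\Phi=\mathcal F$ and $g=\langle\xi\rangle^{-s}\in L^{2p',\infty}(\hM)$, which holds exactly when $s\ge\frac{d}{2p'}$, and you close with the $\Sp_{p,1}$--$\Sp_{p',\infty}$ Hölder inequality, just as the paper does.

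Two small corrections: the ``real interpolation'' announced in your first sentence is never used and not needed. Also, $r=2p'>2$ is simply $p<\infty$ (not $p>1$), so with the inhomogeneous weight the hypothesis $s<\frac d2$ plays no role in this argument.
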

\begin{remark}
	Taking $j=1$, $\lambda_1=1$, and $\lambda_j=0$ for $j>1$ in \Cref{lem:lieb-sobolev-product}, with $g=\langle\nabla\rangle^{-s}f$, gives $\|g\|_{L^{2p}}\lesssim1$. Since $\|f\|_{L^2}=\|g\|_{H^s}=1$, this recovers \eqref{soem}.
\end{remark}
In order to prove \Cref{lem:lieb-sobolev-product}, we briefly set some notations and recall 
operator-valued Cwikel theorem. Let sigma-finite measure spaces $(Y_1,dy_1)$, $(Y_2,dy_2)$ and separable Hilbert spaces $\cH$, $\mathcal{G}$.   For $p \ge 1$, write $L^p(Y_1,\Sp^p(\cH))$ for the measurable $\cH$-compact-operator-valued functions $f$ on $Y_1$ satisfying
\[
\|f\|^p_{L^p(\Sp^p)} = \int_{Y_1} \|f(y_1)\|^p_{\Sp^p(\cH)}\, dy_1 < \infty.
\]
See \Cref{schp} for Schatten space $\Sp^p.$
For the weak-type space, $L^{p,\infty}(Y_2,\mathcal{B}(\mathcal{G}))$  consists of measurable functions $g$ on $Y_2$ valued in bounded operators on $\mathcal{G}$ for which
\[
\|g\|^p_{L^{p,\infty}(\mathcal{B})} = \sup_{\tau>0} \tau^p \left|\{y_2 \in Y_2 : \|g(y_2)\|_{\mathcal{B}(\mathcal{G})} > \tau\}\right| < \infty.
\]
The following theorem bounds the weak Schatten norm of the product \( f\Phi^*g \), which is constructed using multiplication operators \( f \) and \( g \), along with a unitary operator \( \Phi \) that relates \( L^2(Y_1,\cH) \) to \( L^2(Y_2,\mathcal G) \). This theorem expresses the weak Schatten norm in terms of the \( L^q(\Sp^q) \) and weak \( L^q \) norms of \( f \) and \( g \). We will apply it below with \( \Phi = \mathcal F \) (Fourier transform) to derive the Cwikel estimate mentioned in \Cref{lem:cwikel-estimate}.
\begin{knowntheorem}[Operator-valued Cwikel theorem \cite{Frank2014Cwikel}]\label{Cwikel operator valued thm}
	Suppose
	\[
	\Phi : L^2(Y_1,\cH) \longrightarrow L^2(Y_2,\mathcal{G})
	\]
	is unitary and  also a bounded map $L^1(Y_1,\cH) \to L^\infty(Y_2,\mathcal{G}).$  Fix $q>2$. For $f \in L^q(Y_1,\Sp^q(\cH))$ and $g \in L^{q,\infty}(Y_2,\mathcal{B}(\mathcal{G}))$, 
	\[
	f \Phi^* g \in \Sp_{q,\infty}\big(L^2(Y_2,\mathcal{G}), L^2(Y_1,\cH)\big),
	\]
	and
	\begin{equation}\label{Cwikel operator valued eq}
		\|f \Phi^* g\|^q_{\Sp_{q,\infty}\big(L^2(Y_2,\mathcal{G}), L^2(Y_1,\cH)\big)} \le \frac{q}{2}\left(\frac{q}{q-2}\right)^{q-1} C^2 \, \|f\|^q_{L^q(\Sp^q)} \, \|g\|^q_{L^{q,\infty}(\mathcal{B})},
	\end{equation}
	where $f$ acts as a multiplication operator on the left-hand side, $\Phi^*$ denotes the adjoint of $\Phi$, and $C = \|\Phi\|_{L^1(Y_1,\cH) \to L^\infty(Y_2,\mathcal{G})}$.
\end{knowntheorem}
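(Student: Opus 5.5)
The plan is to adapt Cwikel's original truncation argument, in the simplified form of Simon and Frank, to the operator-valued setting. By scaling $f\mapsto f/\|f\|_{L^q(\Sp^q)}$ and $g\mapsto g/\|g\|_{L^{q,\infty}(\mathcal B)}$, we may assume both norms equal $1$. Write $T=f\Phi^*g$. Since $\|T\|^q_{\Sp_{q,\infty}}=\sup_{\kappa>0}\kappa^q n_\kappa(T)$, it suffices to show, for each fixed $\kappa>0$,
\[
n_\kappa(T)\le \frac q2\Big(\frac{q}{q-2}\Big)^{q-1}C^2\,\kappa^{-q}.
\]
Two elementary bounds serve as building blocks:
\begin{enumerate}
\item Since $\Phi$ is unitary, $\|T\|_{\mathcal B}\le \sup_{y_1}\|f(y_1)\|_{\mathcal B}\,\sup_{y_2}\|g(y_2)\|_{\mathcal B}$.
\item Since $\Phi^*:L^1(Y_2,\mathcal G)\to L^\infty(Y_1,\cH)$ has norm $C$ (by duality from the $L^1\to L^\infty$ bound for $\Phi$), the operator kernel $K(y_1,y_2)$ of $\Phi^*$ satisfies $\|K(y_1,y_2)\|_{\mathcal B(\mathcal G,\cH)}\le C$. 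Hence
\[
\|T\|_{\Sp^2}^2\le C^2\int\!\!\int \|f(y_1)\|_{\Sp^2}^2\,\|g(y_2)\|_{\mathcal B}^2\,dy_1\,dy_2 .
\]
\end{enumerate}

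Next, introduce a truncation level $s>0$, to be optimized in terms of $\kappa$. Split $g=g_<+g_>$ with $g_<=g\mathbf 1_{\{\|g\|_{\mathcal B}\le s\}}$. Use the Ky Fan inequality $n_{\kappa}(A+B)\le n_{\theta\kappa}(A)+n_{(1-\theta)\kappa}(B)$ to separate the contributions. For the large-$g$ piece, the weak-type hypothesis together with $q>2$ gives
\[
\int_{\{\|g\|>s\}}\|g\|_{\mathcal B}^2\,dy_2\le \frac{q}{q-2}\,s^{2-q}.
\]
This is exactly where the factor $\frac{q}{q-2}$ enters, and it is the only place $q>2$ is used. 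The large-$g$ piece is then controlled in Hilbert--Schmidt norm via the second bound, and Chebyshev gives $n_\mu(A)\le\mu^{-2}\|A\|_{\Sp^2}^2$.

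The main obstacle is that $f$ lies only in $L^q(\Sp^q)$, not in $L^2(\Sp^2)$ or $L^\infty(\mathcal B)$, so neither building block applies to $f$ directly. I would try to resolve this by a level-set decomposition of $f$ in the operator variable as well. Since $f(y_1)$ is compact, one can spectrally cut each $f(y_1)$ at a threshold $r$, writing $f=f_{\le r}+f_{>r}$, where $f_{\le r}(y_1)$ keeps only the singular values of $f(y_1)$ that are at most $r$. The product of the two small parts is then handled by the first bound with $rs=\theta\kappa$. The remaining terms are handled by the Hilbert--Schmidt bound, using the pointwise (in $y_1$) inequalities
\[
\|f_{>r}(y_1)\|_{\Sp^2}^2\le r^{2-q}\,\|f(y_1)\|_{\Sp^q}^q
\]
and
\[
\operatorname{Tr}\bigl(|f_{\le r}(y_1)|^2\,\mathbf 1\bigr)\ \text{controlled against}\ \|g_>\|_{L^2}^2 .
\]
If the direct splitting produces a term that is not integrable, the fallback is Cwikel's dyadic layer-cake argument. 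In that approach one sums $T=\sum_{k,l}f_k\Phi^*g_l$ over dyadic layers $\|f\|\sim2^k$ and $\|g\|\sim2^l$. Terms with $k+l\le\log_2\kappa$ are absorbed in operator norm, and the others are estimated in $\Sp^2$. Finally, one optimizes over $\theta$, $r$ and $s$, for instance $\theta=\frac{q-2}{q}$ and $s\sim\kappa$, and this should produce the constant $\frac q2\big(\frac q{q-2}\big)^{q-1}C^2$.
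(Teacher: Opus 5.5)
The paper gives no proof of this statement; it is quoted from \cite{Frank2014Cwikel}. Your plan therefore has to stand on its own, and it has two genuine gaps.

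\textbf{The splitting and the dyadic fallback.} The direct splitting cannot work. Two of the four pieces are fine: $f_{\le r}\Phi^*g_{\le s}$ has norm at most $rs$, and $f_{>r}\Phi^*g_{>s}$ is Hilbert--Schmidt. The cross terms are not. The term $f_{\le r}\Phi^*g_{>s}$ would need $\int\|f_{\le r}(y_1)\|_{\Sp^2}^2\,dy_1<\infty$. The term $f_{>r}\Phi^*g_{\le s}$ would need $\int\|g_{\le s}(y_2)\|_{\mathcal B}^2\,dy_2<\infty$, and since $f_{>r}$ is unbounded the operator norm does not help there. For $q>2$ neither condition holds in general. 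The paper's own application already fails: $g=(1+|\xi|^2+|k|^2)^{-s/2}$ with $2s<d$ satisfies $\int_{\{g\le\sigma\}}g^2=\infty$.

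So everything rests on your dyadic fallback, and there you skip its only nontrivial step. The terms with $k+l\le\alpha$ are infinitely many, each of operator norm up to about $2^{k+l}$, so the triangle inequality cannot absorb them. What is needed is orthogonality along each anti-diagonal $k+l=\alpha-j$:
\begin{itemize}
\item Take the spectral cuts $f_k=f\,\mathbf 1_{(2^k,2^{k+1}]}(|f|)$. Then $f_k^*f_{k'}=0$ for $k\neq k'$, and the $g_l$ have disjoint supports.
\item For fixed $j$, the operators $X_k=f_k\Phi^*g_{\alpha-j-k}$ therefore satisfy $X_k^*X_{k'}=0$ and $X_k=X_k\mathbf 1_{\operatorname{supp}g_{\alpha-j-k}}$.
\item This gives $\|\sum_kX_k\|\le\sup_k\|X_k\|\le 4\cdot 2^{\alpha-j}$, and summing over $j\ge0$ gives $8\cdot 2^{\alpha}$.
\item The same orthogonality collapses the Hilbert--Schmidt part to the single sum $\sum_k\|f_k\Phi^*g\,\mathbf 1_{\{\|g\|>2^{\alpha-k+1}\}}\|_{\Sp^2}^2\lesssim_q C^2\,2^{\alpha(2-q)}$, for normalized $f,g$.
\end{itemize}
With this added you do get $\|f\Phi^*g\|^q_{\Sp_{q,\infty}}\le c_q C^2\|f\|^q_{L^q(\Sp^q)}\|g\|^q_{L^{q,\infty}(\mathcal B)}$. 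That is all \Cref{lem:cwikel-estimate} actually uses.

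\textbf{The constant.} Your scheme does not produce the constant in \eqref{Cwikel operator valued eq}, contrary to your last sentence. Suppose the two-piece split were lossless: a bounded part of norm at most $\theta\kappa$, and a Hilbert--Schmidt part with square at most $C^2\frac{q}{q-2}(\theta\kappa)^{2-q}$, for normalized $f,g$. Ky Fan plus Chebyshev then give only $\kappa^q n_\kappa(T)\le \frac{q}{q-2}\,\theta^{2-q}(1-\theta)^{-2}C^2$. Its minimum, attained exactly at your choice $\theta=\frac{q-2}{q}$, is $\frac{q^2}{4}\big(\frac{q}{q-2}\big)^{q-1}C^2$. That is larger than the claimed constant by a factor $\frac q2$. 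The dyadic bookkeeping loses a further factor of order $4^{q-2}$. The constant $\frac q2\big(\frac q{q-2}\big)^{q-1}$ comes from the different, non-dyadic argument in \cite{Frank2014Cwikel}, an adaptation of Rumin's method, which your proposal does not supply.
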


Take $Y_1=\R^n\times\T^m$ and $Y_2=\R^n\times\Z^m$, and let $\cH=\mathcal{G}=\mathbb{C}$, so that \Cref{Cwikel operator valued thm} becomes a statement about scalar functions. Let $\Phi=\mathcal{F}$, the Fourier transform on $\R^n\times\T^m$. Plancherel's theorem tells us that $\mathcal{F}$ is unitary from $L^2(\R^n\times\T^m)$ onto $L^2(\R^n\times\Z^m)$, and the usual $L^1$-$L^\infty$ bound for the Fourier transform gives
\[
\|\mathcal{F}f\|_{L^\infty(\R^n\times\Z^m)}
\leq
\|f\|_{L^1(\R^n\times\T^m)}.
\]
So $\Phi$ satisfies both hypotheses of the theorem, with $C = \|\mathcal{F}\|_{L^1(\R^n\times\T^m)\to L^\infty(\R^n\times\Z^m)} \le 1$.
Since $\Phi=\mathcal{F}$ is a unitary operator and weak Schatten norms are unchanged by composing with $\Phi$ on the right, in particular $$\|V\Phi^*g\|_{\Sp_{q,\infty}(L^2(\R^n\times\Z^m),\,L^2(\R^n\times\T^m))}=\|(V\Phi^*g)\Phi\|_{\Sp_{q,\infty}(L^2(\R^n\times\T^m),\,L^2(\R^n\times\T^m))}$$ and $$(V\Phi^*g)\Phi=V\mathcal{F}^{-1}g\mathcal{F}=V\langle\nabla\rangle^{-s}.$$ This allows us to move back and forth easily between the two presentations below.

Now take
\[
g(\xi,k)
=
\bigl(1+|\xi|^2+|k|^2\bigr)^{-s/2},
\]
which is exactly the multiplier for the operator $\langle \nabla \rangle^{-s}$, i.e.
\[
\langle \nabla \rangle^{-s}=\mathcal{F}^{-1}g\,\mathcal{F}.
\]
Plugging $f=V$ and this $g$ into \eqref{Cwikel operator valued eq} gives 
\[
\|V\langle \nabla \rangle^{-s}\|_{\Sp_{q,\infty}(L^2(\R^n\times\T^m),\,L^2(\R^n\times\T^m))}
\lesssim
\|V\|_{L^q(\R^n\times\T^m)}
\|g\|_{L^{q,\infty}(\R^n\times\Z^m)}.
\]
And if $q=\frac{n+m}{s}>2,$ then $g$ lies in $L^{q,\infty}(\R^n\times\Z^m)$ for the right range of $q$.
i.e.\ whenever $sq=n+m$ with $q>2$. Under this condition $\|g\|_{L^{q,\infty}(\R^n\times\Z^m)}$ is finite, so
\[
\|V\langle \nabla \rangle^{-s}\|_{\Sp_{q,\infty}(L^2(\R^n\times\T^m),\,L^2(\R^n\times\T^m))}
\lesssim
\|V\|_{L^q(\R^n\times\T^m)},
\qquad
q \geq \frac{n+m}{s}>2.
\]

\begin{lemma}[Cwikel estimate]\label{lem:cwikel-estimate}
	Let $d\ge 1,$ $q>2$ and $\frac{d}{q} \leq s < \frac{d}{2}$. Then for all $V \in L^q(\M)$,
	\[
	\|V\langle \nabla \rangle^{-s}\|_{\Sp_{q,\infty}(L^2(\M),\,L^2(\M))} \lesssim \|V\|_{L^q(\M)}.
	\]
\end{lemma}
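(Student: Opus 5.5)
The plan is to apply the operator-valued Cwikel theorem (\Cref{Cwikel operator valued thm}) with scalar Hilbert spaces $\cH=\mathcal{G}=\mathbb{C}$. We take $Y_1=\M$, $Y_2=\hM$, $\Phi=\mathcal{F}$, multiplier $f=V$ and $g(\xi)=\langle\xi\rangle^{-s}$, exactly as in the discussion preceding the lemma. For each of $\M=\R^d,\T^d,\R^n\times\T^m$, Plancherel gives that $\mathcal{F}:L^2(\M)\to L^2(\hM)$ is unitary. The trivial bound $|\widehat{u}(\xi)|\le\|u\|_{L^1(\M)}$ gives $C\le 1$. Since $q>2$, \eqref{Cwikel operator valued eq} then yields
\[
\|V\mathcal{F}^{-1}g\|_{\Sp_{q,\infty}}\lesssim_q \|V\|_{L^q(\M)}\,\|g\|_{L^{q,\infty}(\hM)}.
\]
Right-composition with the unitary $\mathcal{F}$ does not change singular values, and $V\mathcal{F}^{-1}g\mathcal{F}=V\langle\nabla\rangle^{-s}$. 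So the lemma reduces to showing $g\in L^{q,\infty}(\hM)$ under $\frac dq\le s<\frac d2$.

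For this weak-type bound we compute the distribution function of $g$ on $\hM$ with its natural measure: Lebesgue, counting, or Lebesgue $\times$ counting. Since $0<g\le 1$, only levels $\tau\in(0,1)$ matter. For such $\tau$,
\[
\{g>\tau\}=\{\xi\in\hM:\,1+|\xi|^2<\tau^{-2/s}\}\subset\{|\xi|<\tau^{-1/s}\}.
\]
The key counting step is that for $R\ge1$,
\[
|\{\xi\in\hM:|\xi|<R\}|\lesssim R^d.
\]
On $\R^d$ this is the volume of a ball. On $\Z^d$ it follows by comparing each lattice point with a unit cube centred at it, which gives the bound $(R+\sqrt d)^d\lesssim R^d$. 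On $\R^n\times\Z^m$ we fix $k\in\Z^m$ with $|k|<R$ (at most $\lesssim R^m$ choices) and bound the $\xi_1$-slice by a ball of volume $\lesssim R^n$. Hence
\[
\tau^q|\{g>\tau\}|\lesssim \tau^{q}\tau^{-d/s}=\tau^{q-d/s}\le 1
\]
for $\tau\in(0,1)$, because $q\ge d/s$ makes the exponent nonnegative. This gives $\|g\|_{L^{q,\infty}(\hM)}\lesssim 1$.

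The main point needing care is the endpoint and lattice bookkeeping. In the discrete and mixed cases, small levels $\tau$ correspond to large $R$, which is where the lattice-point bound is used. Levels $\tau$ close to $1$ correspond to $R$ near $1$, where the point $\xi=0$ alone has measure comparable to $1$. This is harmless, since then $\tau^q\cdot O(1)$ stays bounded. Unlike the Euclidean homogeneous case, there is no constraint from small $|\xi|$, because the inhomogeneous weight $\langle\xi\rangle^{-s}$ is bounded. The hypothesis $s<\frac d2$ only serves to allow $q\ge d/s$ with $q>2$, as required by the Cwikel theorem, whose constant $\frac q2\left(\frac q{q-2}\right)^{q-1}$ is finite precisely for $q>2$.
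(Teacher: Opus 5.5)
Your proposal is correct and follows the paper's proof. You apply \Cref{Cwikel operator valued thm} with $\cH=\mathcal G=\mathbb C$, $\Phi=\mathcal F$, $f=V$ and $g=\langle\xi\rangle^{-s}$, then pass to $V\langle\nabla\rangle^{-s}$ by right-composing with the unitary $\mathcal F$. Beyond the paper, you actually verify $g\in L^{q,\infty}(\hM)$ through the count $|\{|\xi|<R\}|\lesssim R^d$ on $\R^d$, $\Z^d$ and $\R^n\times\Z^m$, a step the paper only asserts; the explicit lattice bound should read $|B^d(0,R+\sqrt d/2)|$ rather than $(R+\sqrt d)^d$, but only the $\lesssim R^d$ form is used.
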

\begin{proof}
For $\M=\R^n\times\T^m$, this is exactly the computation carried out above: apply
\Cref{Cwikel operator valued thm} with $Y_1=\R^n\times\T^m$, $Y_2=\R^n\times\Z^m$,
$\cH=\mathcal G=\mathbb C$, $\Phi=\mathcal F$ and $f=V$, and take
$g(\xi,k)=(1+|\xi|^2+|k|^2)^{-s/2}$, which lies in $L^{q,\infty}(\R^n\times\Z^m)$
precisely when $s\ge d/q$. The case $\M=\T^d$ is identical with $Y_2=\Z^d$.
For compact manifolds the estimate is also available from
Sukochev--Yang--Zanin \cite[Lemma 2.3]{sukochev2026singular}.
\end{proof}

We are now ready to prove \Cref{lem:lieb-sobolev-product}.
\begin{proofof}{lem:lieb-sobolev-product}
	Set
	\[
	G = \sum_j \lambda_j \left| \langle \nabla \rangle^{-s} f_j \right|^2.
	\]
	We want to bound $\norm{G}_{L^p(\M)}$, and for this it is enough, by duality, to show
	\begin{equation}\label{eq:duality-goal}
		\left| \int_{\M} G(x)V(x)\, dx \right| \lesssim \norm{\lambda}_{\ell^{p,1}}
	\end{equation}
	for every positive $V\in L^{p'}(\M)$ with $\norm{V}_{L^{p'}(\M)} = 1$.
	 Let
	\[
	K = V^{1/2} \langle \nabla \rangle^{-s}, \qquad K^* = \langle \nabla \rangle^{-s} V^{1/2}, \qquad L = K^*K.
	\]
	Let $\gamma$ be the diagonal operator built from $\lambda=(\lambda_j)_j$ and $(f_j)_j$,
	\[
	\gamma = \sum_j \lambda_j \lvert f_j \rangle \langle f_j \rvert, \qquad \lvert f \rangle \langle g \rvert (h) := \langle g,h\rangle f,
	\]
	so that by construction
	\begin{equation}\label{eq:gamma-norm}
		\norm{\gamma}_{\Sp_{p,1}} = \norm{\lambda}_{\ell^{p,1}}.
	\end{equation}
	Expanding the left side of \eqref{eq:duality-goal} term by term,
	\[
	\int_{\M} G(x)V(x)\,dx = \sum_j \lambda_j \int_{\M} \left| V^{1/2}(x)\langle \nabla \rangle^{-s} f_j(x) \right|^2 dx
	= \sum_j \lambda_j \langle Kf_j, Kf_j\rangle_{L^2(\M)},
	\]
	and since $\langle Kf_j,Kf_j\rangle = \langle f_j, K^*Kf_j\rangle = \langle f_j, Lf_j\rangle$,
	\[
	\int_{\M} G(x)V(x)\,dx = \sum_j \lambda_j \langle f_j, Lf_j \rangle_{L^2(\M)} = \Tr(\gamma L).
	\]
	Now we bound $K$. Since $V\in L^{p'}(\M)$, we have $V^{1/2}\in L^{2p'}(\M)$ with
	\[
	\norm{V^{1/2}}_{L^{2p'}(\M)} = \norm{V}_{L^{p'}(\M)}^{1/2} = 1.
	\]
	Since $s \ge \dfrac{d}{2p'} := \dfrac{d}{q}$, by \Cref{lem:cwikel-estimate}, we get 
	\begin{equation}\label{eq:K-bound}
		\norm{K}_{\Sp_{q,\infty}} = \left\| V^{1/2}\langle \nabla \rangle^{-s} \right\|_{\Sp_{q,\infty}}
		\lesssim \norm{V^{1/2}}_{L^q(\M)} = \norm{V}_{L^{p'}(\M)}^{1/2} = 1.
	\end{equation}
	The singular values of $K$ and $L=K^*K$ are related by $s_n(L) = s_n(K)^2$, and since $q=2p'$ this turns the weak $L^q$ bound on $K$ into a weak $L^{p'}$ bound on $L$:
	\begin{equation}\label{eq:L-bound}
		\norm{L}_{\Sp_{p',\infty}} = \norm{K}_{\Sp_{q,\infty}}^2 \lesssim 1.
	\end{equation}
	Combining \eqref{eq:gamma-norm} and \eqref{eq:L-bound} with the Hölder inequality for Lorentz--Schatten norms ($\frac1p+\frac1{p'}=1$),
	\[
	\left| \Tr(\gamma L) \right| \le \norm{\gamma}_{\Sp_{p,1}} \norm{L}_{\Sp_{p',\infty}}
	\lesssim \norm{\lambda}_{\ell^{p,1}}.
	\]
	This is exactly \eqref{eq:duality-goal}, so the lemma follows.
\end{proofof}

\section{Sufficient condition for single equation}\label{sec5}    

In this section, we shall prove \Cref{max stri for wave} and  \Cref{genind}.
\begin{remark}[proof strategy]\label{pst1.1}
	 A Sobolev-type pointwise bound in $t$ (\Cref{calculus lemma}) reduces $\sup_t|e^{it\Delta}P_Nf|$ to an $L^p_{x,y,t}$ norm, controlled by Bernstein's inequality \eqref{Bernstein} and the diagonal Strichartz estimate of \Cref{diag strichartz}. Then \Cref{application of maximal strichartz}  converts the resulting maximal estimate into a.e. convergence via density and Chebyshev inequality argument.
\end{remark}
\begin{proofof}{max stri for wave}
	
	Let $d=n+m.$ We actually prove the stronger estimate
	\[
	\left\|
	\sup_{0\le t\le 1}
	\left|e^{it\Delta}P_Nf(x,y)\right|
	\right\|_{L^{\frac{2(d+2)}{d}}_{x,y}(\mathbb{R}^n\times\mathbb{T}^m)}
	\lesssim_\varepsilon
	N^{\frac{d}{d+2}+\varepsilon}
	\|f\|_{L^2(\mathbb{R}^n\times\mathbb{T}^m)}.
	\]
	Put $F(t)=e^{it\Delta}P_Nf,$ then by \eqref{NLS},
	\[
	F'(t)
	=
	i\Delta e^{it\Delta}P_Nf.
	\]
	Applying  \Cref{calculus lemma} to the above $F$ with $a=0,b=1$,  we obtain
	\[
	\sup_{0\le t\le 1}
	\left|e^{it\Delta}P_Nf(x,y)\right|
	\lesssim
	|P_Nf(x,y)|
	+
	\mu^{\frac1p-1}
	\|\Delta e^{it\Delta}P_Nf\|_{L_t^p([0,1])}
	+
	\mu^{\frac1p}
	\|e^{it\Delta}P_Nf\|_{L_t^p([0,1])}.
	\]
	Since operators $P_N$ and $e^{it\Delta}$ commutes, so using \eqref{Bernstein} (Bernstein’s inequality),
	\[
	\|\Delta e^{it\Delta}P_Nf\|_{L_t^p([0,1])}
	\lesssim
	N^2
	\|e^{it\Delta}P_Nf\|_{L_t^p([0,1])}.
	\]
	Hence,
	\[
	\sup_{0\le t\le 1}
	\left|e^{it\Delta}P_Nf(x,y)\right|
	\lesssim
	|P_Nf(x,y)|
	+
	\mu^{\frac1p-1}N^2
	\|e^{it\Delta}P_Nf\|_{L_t^p([0,1])}
	+
	\mu^{\frac1p}
	\|e^{it\Delta}P_Nf\|_{L_t^p([0,1])}.
	\]
	Taking $\mu=N^{2}$ and $p=\frac{2(d+2)}{d}$, we obtain
	\[
	\sup_{0\le t\le 1}
	\left|e^{it\Delta}P_Nf(x,y)\right|
	\lesssim
	|P_Nf(x,y)|
	+
	2N^{\frac{2}{p}}
	\|e^{it\Delta}P_Nf\|_{L_t^p([0,1])}.
	\]
	Taking the \(L^p(\mathbb{R}^n\times\mathbb{T}^m)\)-norm on both sides and Minkowski's inequality,
	\[
	\left\|
	\sup_{0\le t\le 1}
	\left|e^{it\Delta}P_Nf(x,y)\right|
	\right\|_{L^p(\mathbb{R}^n\times\mathbb{T}^m)}
	\lesssim
	\|P_Nf\|_{L^p(\mathbb{R}^n\times\mathbb{T}^m)}
	+
	2N^{\frac{2}{p}}
	\|e^{it\Delta}P_Nf\|_{L^p_{x,y,t}
		(\mathbb{R}^n\times\mathbb{T}^m\times[0,1])}.
	\]
	Using \eqref{Bernstein}  and \Cref{diag strichartz}, we have 
	\begin{align*}
		\left\|
		\sup_{0\le t\le 1}
		\left|e^{it\Delta}P_Nf\right|
		\right\|_{L^p(\mathbb{R}^n\times\mathbb{T}^m)} &\lesssim_\varepsilon
		N^{\frac d2-\frac d p}
		\|P_Nf\|_{L^2(\mathbb{R}^n\times\mathbb{T}^m)}+
		2N^{\frac d{d+2}}
		N^{\varepsilon}
		\|f\|_{L^2(\mathbb{R}^n\times\mathbb{T}^m)} \\
		&\lesssim_\varepsilon
		N^{\frac d{d+2} +\varepsilon}
		\|f\|_{L^2(\mathbb{R}^n\times\mathbb{T}^m)}.
	\end{align*}
	Since $p \geq2$ and $L^p_{x,y} \left(\R^n \times \mathbb{T}^m \right) \hookrightarrow L^p_{x,y} \left(B^n(0,1)\times \mathbb{T}^m \right) \hookrightarrow L^2_{x,y} \left(B^n(0,1)\times \mathbb{T}^m \right),$
	\begin{equation*}
		\| \sup_{0 < t \leq 1} \left| e^{it\Delta} P_N f(x,y) \right| \|_{L^2_{x,y} \left(B^n(0,1)\times \mathbb{T}^m \right)}
		\lesssim_\varepsilon N^{\frac{n+m}{n+m+2}+\varepsilon}
		\|f\|_{L^2(\mathbb{R}^n \times \mathbb{T}^m)}
	\end{equation*}
\end{proofof}

\begin{proof}[\textbf{Proof of \Cref{genind}}]  \Cref{max stri for wave}  ensures the hypothesis of \Cref{ulc} is satisfied to  get 
	\eqref{gbf} with $s_0=\frac{n+m}{n+m+2}$, $p=2$. But this means  $f$ satisfies the maximal estimate \eqref{max Hs wave} with this $s$ and $p=2$. Then  \Cref{application of maximal strichartz}  gives  the desired point wise  convergence.
\end{proof}

\section{Necessary condition for single equation}\label{sec-necc-single}
In this section we shall prove \Cref{Necessary condition for theorem}.
Our method of proof is inspired by the Bourgain's counterexample \cite{Bourgain2016note} for Schr\"odinger maximal function,  which was done in the Euclidean space $\mathbb R^n$. For detailed exposure on this, we refer to  Pierce \cite{Pierce2020BourgainCounter}. We adopt this  approach in the setting of the waveguide manifold  $\mathbb R^n \times \mathbb T^m.$

\begin{proposition}\label{Necessary condition for proposition}
	Let $n,m\geq 1$ and  $s<\frac{n+m}{2(n+m+1)}.$  Then
	there exist a sequence $R_k\to\infty$ and functions
	$F_k\in L^2(\R^n\times\T^m)$ satisfying
	$\norm{F_k}_{L^2(\R^n\times\T^m)}=1,$
	whose Fourier transforms are supported in the frequency annulus
	\[
	\left\{(\xi,\ell)\in\R^n\times\Z^m:
	\abs{\xi}^2+\abs{\ell}^2\sim R_k^2\right\},
	\]
	such that
	\[
	\lim_{k\to\infty}
	R_k^{-s}
	\norm{
		\sup_{0<t<1}
		\abs{e^{it\Delta}F_k}
	}_{L^1(B(0,1)\times\T^m)}
	=\infty.
	\]
\end{proposition}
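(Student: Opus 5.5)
The construction follows Bourgain's counterexample in the form written up by Pierce \cite{Pierce2020BourgainCounter}, adapted so that the $\T^m$ directions are handled exactly like the Euclidean ones. The periodicity is harmless there, because Bourgain's data are already lattice-type in the frequency variable. Write $d=n+m$ and $z=(x,y)$. We split the frequency variables as $(\xi_1,\xi',\ell)\in\R\times\R^{n-1}\times\Z^m$. We fix parameters $R$ (the frequency scale), $D=R^{\delta}$ with $\delta\in(0,1)$ to be optimized, and $Q$ with $Q\sim R/D$ up to the appropriate power. Take $F_k=c\,\phi(\xi_1/R)\,\widehat g$, i.e.\ (in Fourier space)
\[
\widehat{F}(\xi_1,\xi',\ell)=\phi\Big(\frac{\xi_1-R}{R^{1/2}}\Big)\sum_{\substack{m'\in\Z^{d-1}\\ |m'|\lesssim R/D}}\chi\big(\xi'-Dm'_{\le n-1}\big)\,\mathbf 1\{\ell=Dm'_{>n-1}\},
\]
with $D\in\N$. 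The frequencies are then supported in $|\xi|^2+|\ell|^2\sim R^2$. In the $\R^{n-1}$ directions they sit on $D\Z^{n-1}$ with small smooth bumps $\chi$; in the $\Z^m$ directions they are exactly $D\Z^m$ up to height $R$. We normalize in $L^2$, which costs a factor $(R/D)^{-(d-1)/2}R^{-1/4}$.

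\textbf{Step 1 (evaluation of $e^{it\Delta}F$).} The $\xi_1$ part is a Gaussian-type wave packet. Using \Cref{lem:vdc} to remove the phase over the short $\xi_1$ interval, it contributes $\approx R^{1/2}$ in modulus on a time window of length $\sim R^{-1}$ around $t$ with $x_1\approx 2Rt$. For the transverse variables the sum becomes a quadratic Weyl/Gauss sum
\[
\prod \sum_{|m_j|\lesssim R/D} e^{2\pi i (D m_j z_j + D^2 m_j^2 t)}.
\]
We choose $t$ with $D^2t\equiv p/q$ modulo $1$ with $q\sim Q$ odd, and $z_j$ within distance $h_2\sim (DR/D)^{-1}=R^{-1}$ of points $p_j'/(qD)$. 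Then complete Gauss sums give modulus $\gtrsim (R/D)\,q^{-1/2}$ in each of the $d-1$ transverse coordinates. Here the torus variables $y$ behave exactly like the Euclidean $x'$: for $\ell\in\Z^m$ the exponential $e^{2\pi i \ell\cdot y}$ with $\ell\in D\Z^m$ is a function on $\T^m$, and the set of good $y$ is a union of boxes $B(p'/(qD),h_2)$, which are genuinely subsets of $\T^m$.

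\textbf{Step 2 (measure of the good set).} For each $z=(x_1,x',y)\in B(0,1)\times\T^m$ we must find some $t\in(0,1)$ satisfying both the $x_1\approx 2Rt$ constraint and the rational constraint on $D^2t$. Solving $t=x_1/(2R)+O(R^{-1})$ and requiring $D^2 t$ close to $p/q$ turns this into $x_1$ lying near rationals with denominator $q\sim Q$, with tolerance $h_1\sim D^2/R\cdot R^{-1}$ after rescaling. \Cref{lem:A.1} then gives measure $\sim 1$ for the union $X_Q$, provided $Q^{d}h_1h_2^{d-1}\gtrsim1$; we use it with $n$ replaced by $d$ after mapping $\T^m$ to a fundamental cube. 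On this set we have
\[
\sup_t|e^{it\Delta}F|\gtrsim R^{1/4}(R/D)^{(d-1)/2}Q^{-(d-1)/2}.
\]

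\textbf{Step 3 (optimization).} The remaining work is to choose $D$ and $Q$ so that the condition $Q^{d}h_1h_2^{d-1}\sim1$ from Step 2 holds, and then to compute $R^{-s}\|\sup_t|e^{it\Delta}F|\|_{L^1}\gtrsim R^{-s}\cdot(\text{lower bound})$. We expect this choice to recover Bourgain's exponent $R^{\frac{d}{2(d+1)}-s-\epsilon}$, namely $D=R^{\frac{d-1}{d+1}}$-type choices as in Pierce's exposition. This diverges as $R=R_k\to\infty$ when $s<\frac{d}{2(d+1)}$.

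\textbf{Main obstacle.} We expect the main obstacle to be Step 1–2 bookkeeping. Two points need care. First, the integer constraint $\ell\in D\Z^m$ with $D$ an integer must be compatible with the Gauss-sum evaluation, which requires $q$ odd and $\gcd$ conditions, exactly the hypotheses of \Cref{lem:A.1}. Second, the error term from \Cref{lem:vdc} must stay smaller than the main term uniformly on the good set. If the bookkeeping with mixed discrete and continuous transverse variables is troublesome, our fallback is to take the bumps $\chi$ in the $\xi'$ directions so narrow that those directions also behave as a lattice sum. The whole transverse part is then a uniform $(d-1)$-dimensional Gauss sum, and the Euclidean computation transfers verbatim.
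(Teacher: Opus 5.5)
Your construction and Step 1 follow the paper: Pierce's form of Bourgain's example with the $\Z^m$ frequencies exactly on $D\Z^m$ ($D\in\mathbb{N}$), phase removal in $\xi_1$ via \Cref{lem:vdc}, complete Gauss sums in the $d-1$ transverse directions, and \Cref{lem:A.1} for the measure. Your normalized lower bound $R^{1/4}(R/(DQ))^{(d-1)/2}$ also matches the paper's. There is a genuine gap, though: Step 2, where the exponent is produced, is wrong as written, and Step 3 is not carried out.

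First, the $x_1$-window has width $R^{-1/2}$, not $R^{-1}$. After $\xi_1=R+\lambda R^{1/2}$, \Cref{lem:vdc} keeps the $\xi_1$-integral $\gtrsim R^{1/2}$ whenever $R^{1/2}\abs{x_1+2Rt}\le c_0$ and $R\abs{t}\le c_0$. Since $t$ is free, take $t=p_1/(qD^2)$ exactly. After the rescaling $(x_1,x',y)\mapsto(D^2x_1/R,\,Dx',\,Dy)$, the boxes are centred at $(p_1,p')/q$ with sides $h_1=D^2/R^{3/2}$ and $h_2=D/R$. Your $h_1\sim D^2/R^2$ corresponds to an $x_1$-window of width $R^{-1}$. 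Your $h_2\sim R^{-1}$ is in the unrescaled variables, so your two sides are not even in the same coordinates. Second, in dimension $d$ there are $\sim Q^{d+1}$ points $(p_1,p')/q$ with $q\sim Q$. So the threshold in \Cref{lem:A.1} is $Q^{d+1}h_1h_2^{d-1}\sim 1$, not $Q^{d}h_1h_2^{d-1}$.

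These are not cosmetic. With $h_1=D^2/R^2$ and $h_2=D/R$, one gets $\abs{X_Q}\sim\min\{1,(QD/R)^{d+1}\}$. Hence $\abs{X_Q}\cdot R^{1/4}(R/(DQ))^{(d-1)/2}\lesssim R^{1/4}$ for every choice of $D,Q$; when $QD\le R$ it is comparable to $R^{1/4}(QD/R)^{(d+3)/2}$. Your $Q^d$ version is worse still. So your setup only reproduces the Dahlberg--Kenig threshold $s<\frac14$, and no optimization in Step 3 can reach $\frac{d}{2(d+1)}$. Your guess $D=R^{\frac{d-1}{d+1}}$ is also not the right scale.

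With the correct sides the condition reads $(QD)^{d+1}\sim R^{d+\frac12}$. The paper takes $Q=R^{\frac{d-1}{2(d+1)}}$ and $D=R^{\frac{d+2}{2(d+1)}}$, made an integer along $R_k=k^{2(d+1)}$. With these choices:
\begin{itemize}
\item $h_1=1/Q$ and $h_2\le 1/Q$, as \Cref{lem:A.1} requires;
\item $R/(DQ)=R^{\frac{1}{2(d+1)}}\gg1$, so the Gauss sums are complete;
\item $D^2/R=R^{\frac{1}{d+1}}\gg1$, so the window $t\le c_0/R$ contains many admissible $p_1/(qD^2)$;
\item the lower bound becomes $R^{\frac14}\cdot R^{\frac{d-1}{4(d+1)}}=R^{\frac{d}{2(d+1)}}$ on a set of measure $\sim1$.
\end{itemize}
The mixed discrete/continuous transverse variables you flag as the main obstacle are not the real difficulty; the numerology in the $x_1$ direction is.
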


\begin{remark}  Our construction of $F_k$ in \Cref{Necessary condition for proposition} adapts the approach used by Pierce \cite{Pierce2020BourgainCounter} for $\mathbb R^n$. However, due to  mixed geometric features of waveguide manifold $\R^n \times \T^m$, we need to choose torus component carefully. Indeed, we need to localize frequencies simultaneously in a Euclidean direction and in periodic directions. More specifically, we briefly highlight the differences.
	\begin{itemize}
		\item[--]  We keep the first coordinate $x_1$ continuous, exactly as in the Euclidean construction, with frequency concentrated near $R$ at scale $R^{1/2}$ (see $f(x_1)$ in \eqref{eq:def-f}). In the remaining $(n-1)$-Euclidean coordinates $x'$ and in the periodic coordinate $y\in\T^m$, we replace the continuous frequency localization by a lattice sum over $\ell=(\ell_1,\ell_2)\in\Lambda_1\times\Lambda_2$, using a single integer dilation parameter $D$. This forces $D\ell_2\in\Z^m$ so that each summand is a character on $\T^m$ (see $g(x')h(y)$ in \eqref{eq:def-f}), which has no analogue on $\R^n$. 
		\item[--] The waveguide structure  $X_R$ (to be defined below \eqref{dfxr}) is different as compared to the Euclidean case.  We shall  need a lower bound $|X_R|\gtrsim1$ for the resonant set. Consequently, the rescaling map $T$ acts differently on the Euclidean and periodic coordinates: on $x_1,x'$ it is an ordinary linear change of variables, while on $y$ the integer dilation $y\mapsto Dy$ is compatible with the $1$-periodicity of $\T^m$ and produces $D^m$ periodic copies of the fundamental domain. Accounting for this mixed Jacobian, and then applying \Cref{lem:A.1} to control the resulting union of rational neighborhoods on the periodic directions, is what let us close the argument on $\R^n\times\T^m$.
	\end{itemize}  
\end{remark}
\begin{remark}[notations]\label{nota} For the convenience in the proof of  \Cref{Necessary condition for proposition}, we   fix some notations. 
	\begin{itemize}
		\item Let $(x,y)\in\R^n\times\T^m,$ where $x=(x_1,\dots,x_n)=(x_1,x')\in\R^n, $ $ x'=(x_2,\dots,x_n)\in\R^{n-1},$ and $y=(y_1,\dots,y_m)\in\T^m.$
		\item We write $(\xi,k)\in\R^n\times\Z^m,$ where $\xi=(\xi_1,\dots,\xi_n)=(\xi_1,\xi')\in\R^n,$ $ \xi'=(\xi_2,\dots,\xi_n)\in\R^{n-1},$ and $k=(k_1,\dots,k_m)\in\Z^m.$
	\end{itemize}
\end{remark}
Next, before proving \Cref{Necessary condition for proposition}, we  construct a test function $F$. This will be  used throughout the proof.
\begin{itemize}
	\item[--] \underline{\textbf{\textit{Construction of the test function $F$}.\\}}
\end{itemize}
Fix Schwartz functions $\phi:\R\to\R_+$ and $\Phi:\R^{n-1}\to\R_+$ satisfying
\[
\operatorname{supp}\widehat\phi\subset[-1,1],\qquad \operatorname{supp}\widehat\Phi\subset B^{n-1}(0,1)\quad \text{and} \quad \phi(0)=\Phi(0)=1,
\]
with $$\Phi(x')=\prod_{i=2}^n\varphi(x_i),$$ so that $$\widehat\Phi(\xi')=\prod_{i=2}^n\widehat\varphi(\xi_i).$$  For the  precise  construction of such Schwartz functions $\phi$ and $\varphi$, we refer to  Pierce \cite[Sect. 2.1]{Pierce2020BourgainCounter}. Let $R$ and $D$ be two parameters, where $D$ is a positive integer that depends on $R$. The appropriate values for these parameters will be chosen later.

We define the frequency-localized lattice sets as follows:
\begin{align}
	\Lambda_1&=\Big\{\ell_1=(\ell_{1,1},\dots,\ell_{1,n-1})\in\Z^{n-1}\ :\ \frac{R}{2D}<\ell_{1,i}<\frac RD,\ i=1,\dots,n-1\Big\},\notag\\
	\Lambda_2&=\Big\{\ell_2=(\ell_{2,1},\dots,\ell_{2,m})\in\Z^{m}\ :\ \frac{R}{2D}<\ell_{2,j}<\frac RD,\ j=1,\dots,m\Big\}.\notag
\end{align}
Denote $$\Lambda:=\Lambda_1\times\Lambda_2= \{\ell=(\ell_1,\ell_2): \ell_1\in \Lambda_1, \ell_2 \in \Lambda_2\}.$$ By construction,
\begin{equation}\label{eq:lattice-size}
	\abs{\Lambda_1}\sim(R/D)^{n-1},\qquad \abs{\Lambda_2}\sim(R/D)^{m},\qquad \abs{\Lambda}\sim(R/D)^{n+m-1}.
\end{equation}
Here $\abs{\Lambda}$ denotes cardinality of the set $\Lambda.$
Define
\begin{equation}\label{eq:def-F}
	F(x,y)=e^{2\pi iRx_1}\,\phi(R^{1/2}x_1)\,\Phi(x')\sum_{\ell\in\Lambda}e^{2\pi iD\ell_1\cdot x'}\,e^{2\pi iD\ell_2\cdot y}.
\end{equation}
Here $R$   will be chosen in a way so that   $D\ell_2\in\Z^m$ for every $\ell_2\in\Lambda_2$. See \eqref{choosing R_k}.   Notice that  each summand is a well-defined character on $\T^m$, and $F$ is a well-defined Schwartz function on $\R^n\times\T^m$.\\
\noindent
\begin{itemize}
	\item[--] \underline{\textit{\textbf{Computation of the $L^2$ norm of $F$}}}. 
\end{itemize} 
We may factor $F$  as product of functions in $x_1, x'$ and $y,$ specifically, rewrite $F$ as follows  
\begin{equation*}
	F(x,y)=f(x_1)g(x')h(y)
\end{equation*}
with
\begin{equation}\label{eq:def-f}
    f(x_1)=e^{2\pi iRx_1}\phi(R^{1/2}x_1),\qquad
g(x')=\Phi(x')\sum_{\ell_1\in\Lambda_1}e^{2\pi iD\ell_1\cdot x'},\qquad
h(y)=\sum_{\ell_2\in\Lambda_2}e^{2\pi iD\ell_2\cdot y}.
\end{equation}
Substituting $u=R^{1/2}x_1$, we have 
\begin{equation}\label{eq:norm-g}
	\norm{f}_{L^2(\R)}^2=\int_\R\abs{\phi(R^{1/2}x_1)}^2\,dx_1=R^{-1/2}\norm{\phi}_{L^2}^2\sim R^{-1/2}.
\end{equation}
Since $\widehat\Phi$ is supported in $B^{n-1}(0,1)$ and the functions $\Phi(x')e^{2\pi iD\ell_1\cdot x'}$, $\ell_1\in\Lambda_1$, have Fourier transforms $\widehat\Phi(\xi'-D\ell_1)$ supported in pairwise disjoint balls of radius $1$ centered at $D\ell_1.$ Distinct $\ell_1\in\Lambda_1\subset\Z^{n-1}$ give centers at least $D$ apart, and $D>2$. So these balls are pairwise disjoint. Disjoint Fourier support gives $L^2$-orthogonality by Plancherel, and it follows that 
\begin{equation}\label{eq:norm-h}
	\norm{g}_{L^2(\R^{n-1})}^2=\sum_{\ell_1\in\Lambda_1}\norm{\Phi}_{L^2}^2=\abs{\Lambda_1}\,\norm{\Phi}_{L^2}^2\sim (R/D)^{n-1}.
\end{equation}
On $\T^m$, distinct $\ell_2\in\Z^m$ give distinct dual frequencies $D\ell_2\in\Z^m$, so the characters $e^{2\pi iD\ell_2\cdot y}$ are pairwise orthonormal in $L^2(\T^m)$. Hence
\begin{equation}\label{eq:norm-k}
	\norm{h}_{L^2(\T^m)}^2=\sum_{\ell_2\in\Lambda_2}1=\abs{\Lambda_2}\sim (R/D)^{m},\qquad
	\norm{h}_{L^2(\T^m)}\sim (R/D)^{m/2}.
\end{equation}
Combining \eqref{eq:norm-g}, \eqref{eq:norm-h} and \eqref{eq:norm-k}, we compute $L^2$-norm of $F:$
\begin{equation}\label{eq:norm-f}
	\norm{F}_{L^2(\R^n\times\T^m)}=\norm{f}_{L^2}\norm{g}_{L^2}\norm{h}_{L^2}\ \sim\ R^{-1/4}\left(\frac RD\right)^{\frac{n+m-1}{2}}.
\end{equation}

\begin{itemize}
	\item[--] \underline{\textit{\textbf{Frequency localization}.}}
\end{itemize}
Recalling definitions of $f,g$ and $h$ from \eqref{eq:def-f}, we have 
\[
\widehat f(\xi_1)=\int_\R \phi(R^{1/2}x_1)\,e^{-2\pi ix_1(\xi_1-R)}\,dx_1=R^{-1/2}\,\widehat\phi\!\left(\frac{\xi_1-R}{R^{1/2}}\right),
\]
\[
\widehat g(\xi')=\sum_{\ell_1\in\Lambda_1}\widehat\Phi(\xi'-D\ell_1),\qquad
\widehat h(k)=\mathbf 1_{\{D\ell_2: \ \ell_2\in\Lambda_2\}}(k).
\]
In view of this and  \eqref{eq:def-F}, we have 
\begin{equation}\label{eq:hat-f}
	\widehat F(\xi,k)=R^{-1/2}\,\widehat\phi\!\left(\frac{\xi_1-R}{R^{1/2}}\right)\sum_{\ell_1\in\Lambda_1}\widehat\Phi(\xi'-D\ell_1)\cdot\mathbf 1_{\{D\ell_2: \ \ell_2\in\Lambda_2\}}(k).
\end{equation}

On the support of \eqref{eq:hat-f}, $\abs{\xi_1-R}\le R^{1/2}$, hence $\xi_1=R+O(R^{1/2})$. Since $\abs{\xi'-D\ell_1}\le1$ and $\abs{D\ell_1}\sim R$, the triangle inequality
\[
\big|\,\abs{\xi'-D\ell_1}-\abs{D\ell_1}\,\big|\le\abs{\xi'}\le\abs{\xi'-D\ell_1}+\abs{D\ell_1}
\]
forces $\abs{\xi'}\sim R$; likewise $\abs{k}=\abs{D\ell_2}\sim R$. Combining these estimates,
\begin{equation}\label{eq:supp-f}
	\operatorname{supp}\widehat F\ \subset\ \{(\xi,k)\ :\ \abs{(\xi,k)}\sim R\}.
\end{equation}
\begin{remark} Taking the construction of test function $F$ (see \eqref{eq:def-F}) into account, the following steps are in order to prove \Cref{Necessary condition for proposition}.
	\begin{itemize}
		\item[-]  First, we reduce $e^{it\Delta}F(x,y)$  to the quadratic Gauss sum \eqref{eq:S-estimate}. Thus, the problem is reduced to estimating the Gauss sum. At rational points, the sum can be evaluated exactly. This gives the lower bound \eqref{eq:pointwise-final} for $(x,y)$ near suitable rationals $p_1/q$ and $p'/q$.
		\item[-] Second,  we show  that  $X_R$ (\eqref{dfxr}) (arises from the previous step), has the Lebesgue measure bounded below independently of $R$. Hence, the lower bound obtained in Step I holds on a set of positive measure. Combining the two steps gives the divergence stated in \Cref{Necessary condition for proposition}.
	\end{itemize}
\end{remark}

We are now ready to prove \Cref{Necessary condition for proposition}.

\begin{proofof}{Necessary condition for proposition} We divide the proof into two steps:
	Let  $F$ be as in \eqref{eq:def-F}. 
	
	\begin{itemize}
		\item[--] \textbf{Step I: Reduction of the propagator $e^{it\Delta} F$ into Gauss sum \eqref{eq:S-estimate} and application to lower bound estimate. Specifically, for certain $(x,y)$ we have   
			\begin{equation*}
				\frac{\abs{e^{it\Delta}F(x,y)}}{\norm{F}_{L^2}}\ \gtrsim\ R^{1/4}\left(\frac{R}{Dq}\right)^{\frac{n+m-1}{2}}.
		\end{equation*}} 
		
	\end{itemize}
	By Fourier inversion and \eqref{eq:hat-f},
	\begin{equation}\label{eq:prop-1}
		e^{it\Delta}F(x,y)=\int_{\R^n}R^{-1/2}\,\widehat\phi\!\left(\frac{\xi_1-R}{R^{1/2}}\right)\sum_{\ell_1\in\Lambda_1}\widehat\Phi(\xi'-D\ell_1)\sum_{\ell_2\in\Lambda_2}e^{2\pi i\left[x\cdot\xi+y\cdot D\ell_2+\abs\xi^2t+\abs{D\ell_2}^2t\right]}\,d\xi.
	\end{equation}
	Recall \Cref{nota}.
	Substitute $\xi_1=R+\lambda R^{1/2}$ (so $\abs\lambda\le1$ on $\operatorname{supp}\widehat\phi$, and $d\xi_1=R^{1/2}\,d\lambda$, which exactly cancels the prefactor $R^{-1/2}$), and inside each summand $\xi'=D\ell_1+\eta'$ with $\abs{\eta'}\le1$. Expanding
	\[
	\xi_1^2=R^2+2\lambda R^{3/2}+\lambda^2R,\qquad \abs{\xi'}^2=\abs{D\ell_1}^2+2D\ell_1\cdot\eta'+\abs{\eta'}^2,
	\]
	and regrouping the phase, \eqref{eq:prop-1} becomes
	\begin{equation}\label{eq:prop-2}
		e^{it\Delta}F(x,y)=e^{2\pi i(Rx_1+R^2t)}\,I_\lambda(x_1,t)\sum_{\ell\in\Lambda}e^{2\pi i\left(Dx'\cdot\ell_1+Dy\cdot\ell_2+D^2t\abs\ell^2\right)} \left(\prod_{i=2}^n I_{\eta_i}(x_i,t;\ell_{1,i})\right).
	\end{equation}
	Here $\abs\ell^2:=\abs{\ell_1}^2+\abs{\ell_2}^2,$
	\begin{equation}\label{eq:Lambda}
		I_\lambda(x_1,t)=\int_{-1}^{1}\widehat\phi(\lambda)\,e^{2\pi i\left(\lambda R^{1/2}(x_1+2Rt)+\lambda^2Rt\right)}\,d\lambda,
	\end{equation}
	and $I_{\eta_i}$ are the analogous one-dimensional integrals arising from the tensor factorization of $\widehat\Phi$, with phase $\eta_{i} x_i +2D\ell_{1,i}\eta_it+\eta_i^2t$. Specifically, 
	\begin{equation*}
		I_{\eta_i}(x_i,t;\ell_{1,i})=\int_{-1}^1\widehat\varphi(\eta_i)\,e^{2\pi i\left(\eta_i(x_i+2D\ell_{1,i}t)+\eta_i^2t\right)}\,d\eta_i.
	\end{equation*}
	
	\begin{itemize}
		\item[-] \underline{\textbf{\textit{Phase-removal argument for} $I_{\lambda}$}}.
	\end{itemize}
	Applying \Cref{lem:vdc} to \eqref{eq:Lambda} with $\mu=\widehat\phi$, $[a,b]=[-1,1]$, $\Psi(\lambda)=\lambda R^{1/2}(x_1+2Rt)+\lambda^2Rt$, so that $\Psi'(\lambda)=R^{1/2}(x_1+2Rt)+2\lambda Rt$ and 
	\[
	\norm{\Psi'}_{L^\infty[-1,1]}\le R^{1/2}\abs{x_1+2Rt}+2R\abs t.
	\]
	Since $\int\widehat\phi=\phi(0)=1$, \Cref{lem:vdc} gives
	\[
	\abs{I_\lambda(x_1,t)}\ \ge\ 1-4\pi\norm{\widehat\phi}_{L^1}\Big(R^{1/2}\abs{x_1+2Rt}+2R\abs t\Big).
	\]
	Hence, if $c_0:=\big(24\pi\norm{\widehat\phi}_{L^1}\big)^{-1}$ and
	\begin{equation}\label{eq:localization}
		R^{1/2}\abs{x_1+2Rt}\le c_0,\qquad R\abs t\le c_0,
	\end{equation}
	then \begin{equation}\label{eq:Ilambda-final}
		\abs{I_\lambda(x_1,t)}\ge\tfrac12.
	\end{equation}
	\begin{itemize}
		\item[-] \underline{\textbf{\textit{Reduction to arithmetic sum}}}. 
	\end{itemize}
	Put  $$S:=\sum_{\ell_1\in\Lambda_{1}}e^{2\pi i\left(Dx'\cdot\ell_1+D^2t\abs{\ell_{1}}^2\right)} \left(\prod_{i=2}^n I_{\eta_i}(x_i,t;\ell_{1,i})\right).$$
	We should think of $S$ corresponding to the Euclidean component in \eqref{eq:prop-2}.
	Note that 
	\begin{align*}
		\sum_{\ell \in\Lambda}e^{2\pi i\left(Dx'\cdot\ell +  Dy \cdot \ell_2+D^2t\abs{\ell}^2\right)} \left(\prod_{i=2}^n I_{\eta_i}(x_i,t;\ell_{1,i})\right) = \sum_{\ell_2\in\Lambda_2}e^{2\pi i\left(Dy\cdot\ell_2+D^2t\abs{\ell_{2}}^2\right)} \cdot  S
	\end{align*}
	We fix $q\ge1$ and integers $p_1,\dots,p_n$ with $\gcd(p_1,q)=1$, and set
	\begin{equation}\label{eq:rational-approx euclidean}
		D^2t=\frac{p_1}q,\qquad Dx_i=\frac{p_i}q+\varepsilon_i\ (i=2,\dots,n),
	\end{equation} with $\abs{\varepsilon_i} \lesssim\ D/R.$
	Writing $\tilde{p}=(p_2,\dots,p_n)$ and $\tilde{\varepsilon}=(\varepsilon_2,\dots,\varepsilon_n),$
	\begin{equation}\label{eq:phase-expansion euclidean}
		Dx'\cdot\ell_1+D^2t\abs{\ell_1}^2=\frac{\tilde{p}\cdot\ell_1+p_1\abs{\ell_1}^2}{q}+\tilde{\varepsilon} \cdot\ell_1.
	\end{equation}
	Under assumption \eqref{eq:rational-approx euclidean}, by \cite[page 8, Eq. (3.7)]{Eceizabarrena2025BourgainCounter} (see also \cite[Section 3.1]{Eceizabarrena2022pointwiseFractals}, \cite{Pierce2020BourgainCounter}), we  can reduce $S$ to the arithmetic sum. Specifically, we have 
	\begin{equation}\label{euclidean sum}
		|S|  \sim \left| \sum_{{\ell_{1}}\in\Lambda_1}e^{2\pi i\left(\tilde{p}\cdot{\ell_{1}}+p_1\abs{\ell_{1}}^2\right)/q} e^{2 \pi i \tilde{\varepsilon} \cdot {\ell_{1}}} \right|
	\end{equation}
	In order to handle the contribution from torus component in \eqref{eq:prop-2}, we  further assume, integers $p_{n+1},\dots,p_{n+m}$, and set
	\begin{equation}\label{eq:rational-approx torus}
		Dy_j=\frac{p_{n+j}}q+\varepsilon_{n+j}\ (j=1,\dots,m),
	\end{equation}
	with $\abs{\varepsilon_{n+j}} \lesssim\ D/R.$
	Writing $\tilde{p}'=(p_{n+1},\dots,p_{n+m})$ and $\tilde{\varepsilon}'=(\varepsilon_{n+1},\dots,\varepsilon_{n+m}),$
	\begin{equation}\label{eq:phase-expansion torus}
		Dy\cdot\ell_2+D^2t\abs{\ell_{2}}^2=\frac{\tilde{p}'\cdot\ell_{2}+p_1\abs{\ell_{2}}^2}{q}+\tilde{\varepsilon}'\cdot\ell_2.
	\end{equation}
	Thus we obtain
	\begin{equation}\label{torus sum}
		\left|\sum_{\ell_2\in\Lambda_2}e^{2\pi i\left(Dy\cdot\ell_2+D^2t\abs{\ell_{2}}^2\right)}\right|= \left| \sum_{{\ell_{2}}\in\Lambda_2}e^{2\pi i\left(\tilde{p}'\cdot{\ell_{2}}+p_1\abs{\ell_{2}}^2\right)/q} e^{2 \pi i \tilde{\varepsilon}' \cdot {\ell_{2}}} \right|
	\end{equation}
	Put 
	\[p'=(\tilde{p},\tilde{p}')=(p_2,\dots,p_n,p_{n+1},\dots,p_{n+m}), \varepsilon=(\tilde{\varepsilon},\tilde{\varepsilon}')=(\varepsilon_2,\dots,\varepsilon_n,\varepsilon_{n+1},\dots,\varepsilon_{n+m}).\]
	Then
	\begin{equation}\label{eq:phase-expansion}
		Dx'\cdot\ell_1+Dy\cdot\ell_2+D^2t\abs\ell^2=\frac{p'\cdot\ell+p_1\abs\ell^2}{q}+\varepsilon\cdot\ell,
	\end{equation}
	with 
	\begin{equation}\label{eq:eps-delta}
		\abs{\varepsilon_i} \lesssim\ D/R.
	\end{equation}
	Under this identification, by \eqref{euclidean sum} and \eqref{torus sum},
	\begin{equation*}
		\left| \sum_{\ell_2\in\Lambda_2}e^{2\pi i\left(Dy\cdot\ell_2+D^2t\abs{\ell_{2}}^2\right)} \cdot  S \right| \sim \left| \sum_{\ell\in\Lambda}e^{2\pi i\left(p'\cdot\ell+p_1\abs\ell^2\right)/q} e^{2 \pi i \varepsilon \cdot \ell} \right|.
	\end{equation*}
	Conditions \eqref{eq:localization},\eqref{eq:rational-approx euclidean},\eqref{eq:rational-approx torus} and \eqref{eq:eps-delta}    allows us to estimate the right side of the above expression by a Gauss sum (see \cite[page 8, Eq. (3.8)]{Eceizabarrena2025BourgainCounter} (see also \cite[Section 3.1]{Eceizabarrena2022pointwiseFractals}, \cite{Pierce2020BourgainCounter}), i.e.,
	\begin{equation}\label{Gauss sum for waveguide}
		\left| \sum_{\ell\in\Lambda}e^{2\pi i\left(p'\cdot\ell+p_1\abs\ell^2\right)/q} e^{2 \pi i \varepsilon \cdot \ell} \right| \sim \prod_{j=2}^{n+m} \left(\frac{R}{Dq} \right)  \left| \sum_{v_j=1}^{q}  e^{2\pi i(p_j v_j+p_1{v_j}^2)/q}\right|.
	\end{equation}
	Now for suitable choice of $q,p_1,p'$ (for instance, choosing $q$ odd or, alternatively, $q$ even and $p_j \equiv q/2 \pmod{2}$), each Gauss sum has size $\sqrt{q}.$ So
	using \eqref{eq:Ilambda-final} and \eqref{Gauss sum for waveguide} to \eqref{eq:prop-2}, we get
	\begin{equation}\label{eq:S-estimate}
		\abs{e^{it\Delta}F(x,y)} \ \sim\ \left(\frac{R}{D\sqrt q}\right)^{n+m-1}.
	\end{equation}
	Thus by \eqref{eq:norm-f}, we get
	\begin{equation}\label{eq:pointwise-final}
		\frac{\abs{e^{it\Delta}F(x,y)}}{\norm{F}_{L^2}}\ \gtrsim\ R^{1/4}\left(\frac{R}{Dq}\right)^{\frac{n+m-1}{2}}.
	\end{equation}
	\begin{itemize}
		\item[--] \textbf{Step II: The resonant set and its Lebesgue measure}
	\end{itemize}
	We point out that in previous step the bound \eqref{eq:pointwise-final} does not hold for every $(x,y)$. It only holds on certain spacetime boxes, coming from the rational approximations \eqref{eq:rational-approx euclidean} and \eqref{eq:rational-approx torus}, with denominator $q$ near a parameter $Q=Q(R)$, which we still need to choose.
	Let $X_R$ be the union of all these boxes, one for each admissible $q\sim Q$ and each choice of residues $(p_1,p')$ (to be defined more specifically below in \eqref{dfxr}). By construction, \eqref{eq:pointwise-final} holds for every $(x,y)\in X_R$.
	
	If $\abs{X_R}\to0$ as $R\to\infty$, the pointwise bound \eqref{eq:pointwise-final} would hold only on a vanishing set. We therefore prove
	$$\abs{X_R}\gtrsim1 \quad \text{uniformly in} \ R.$$
	Let us discuss the conditions imposed in \eqref{eq:localization}, \eqref{eq:rational-approx euclidean}, \eqref{eq:rational-approx torus} and \eqref{eq:eps-delta}. First, the second conditions in \eqref{eq:rational-approx euclidean}, \eqref{eq:rational-approx torus} and \eqref{eq:eps-delta} can be written as
	
	$$z \in B^{n+m-1}\left(\frac{p'}{Dq},\frac{c}{R}\right),$$
	where $z=(x',y)\in\mathbb R^{n-1}\times\mathbb T^m.$
	On the other hand, the first conditions in \eqref{eq:localization} and \eqref{eq:rational-approx euclidean}  give
	$$x_1 \in B^1\left(\frac{Rp_1}{D^2q},\frac{c}{R^{1/2}}\right).$$
	Taking this  consideration into account, we define
	\begin{equation}\label{dfxr}
		X_R := \bigcup_{q\sim Q}\bigcup_{(p_1,p')} B^1\!\left(\frac{Rp_1}{D^2q},\,\frac{c}{R^{1/2}}\right)\times B^{n+m-1}\!\left(\frac{p'}{Dq},\,\frac{c}{R}\right)\subset[0,1]^{n+m}.   
	\end{equation}
	Write $z=(x',y)\in\mathbb R^{n-1}\times\mathbb T^m$, and define $$T(x_1,z)=\left(\frac{D^2}{R}x_1,\,Dz\right).$$
	For the Euclidean coordinates $x_1,x'$ this is an ordinary linear change of variables, contributing a factor $\frac{D^2}{R}\cdot D^{n-1}$ to the Jacobian. For the torus coordinate $y$, since $D$ is a positive integer the dilation $y\mapsto Dy$ is compatible with the $1$-periodicity of $y,$ locally it contributes a factor $D^m$ to the Jacobian, and the image of the fundamental domain $[0,1)^m$ consists of $D^m$ periodic copies of itself. Combining these, we have  $$|T(E)| = \frac{D^2}{R}D^{n+m-1}\,|E|$$
	for every measurable set $E$, where $|E|$ is the Lebesgue measure, understood on the lifted domain with the periodic identification in $y$ just described.
	
	Applying $T$ to each ball of $X_R$ sends center $Rp_1/(D^2q)\mapsto p_1/q$, with radius $c/R^{1/2}\mapsto cD^2/R^{3/2}$, and center $p'/(Dq)\mapsto p'/q$, with radius $c/R\mapsto cD/R$. As $X_R\subset[0,1]^{n+m}$, the parameters $p_1,p'$ range over $[0,q)$, and incrementing either by $q$ shifts the corresponding center $p_1/q$ or $p'/q$ by an integer vector. These shifts account exactly for the $\frac{D^2}{R}D^{n+m-1}$ periodic copies produced by $T$. So $T(X_R)$ consists of exactly $\frac{D^2}{R}D^{n+m-1}$ integer translates of
	$$\widetilde X_R := \bigcup_{q\sim Q}\bigcup_{(p_1,p')\in[0,q)^{n+m}} B^1\!\left(\frac{p_1}{q},\,\frac{cD^2}{R^{3/2}}\right)\times B^{n+m-1}\!\left(\frac{p'}{q},\,\frac{cD}{R}\right)\subset[0,1)^n \times \mathbb T^{m},$$
	so $|T(X_R)| = \frac{D^2}{R}D^{n+m-1}\,|\widetilde X_R|$. Comparing with the Jacobian relation above,
	$$|X_R| = |\widetilde X_R|.$$
	Set
	$$h_1(Q) = \frac{D^2}{R^{3/2}}, \qquad h_2(Q) = \frac{D}{R}.$$
	With
	$$Q = R^{\frac{n+m-1}{2(n+m+1)}}, \qquad D = R^{\frac{n+m+2}{2(n+m+1)}},$$
	we get $(QD)^{n+m+1}\sim R^{3/2}R^{n+m-1}$, i.e. $Q^{n+m+1}h_1(Q)h_2(Q)^{n+m-1}\sim 1$. By \Cref{lem:A.1},
	$$|\widetilde X_R|\sim 1,$$
	hence $|X_R|\sim 1$. Thus, for $z=(x,y)\in X_R$,
	$$\frac{|e^{it\Delta}F(x,y)|}{\|F\|_{L^2(\mathbb R^n\times\mathbb T^m)}}\gtrsim R^{\frac{n+m}{2(n+m+1)}}.$$
	Setting $F_R:=F/\norm F_{L^2}$ (see \eqref{eq:def-F}), and using $\abs{X_R}\gtrsim1$ uniformly in $R$,
	\[
	R^{-s}\norm{\sup_{0<t<1}\abs{e^{it\Delta}F_R}}_{L^1(B^n(0,1)\times\T^m)}\ \gtrsim\ R^{\frac{n+m}{2(n+m+1)}-s}\ \longrightarrow\ \infty
	\]
	for every $s<\dfrac{n+m}{2(n+m+1)}$.
	
	It remains to  choose $R_k$ and $F_k$ explicitly. The construction of $F$ in \eqref{eq:def-F} needs $D=D(R)=R^{(n+m+2)/(2(n+m+1))}$ to satisfy $D\ell_2\in\Z^m$ for every $\ell_2\in\Lambda_2$; an integer value of $D$ suffices. Setting
	\begin{equation}\label{choosing R_k}
		R_k:=k^{2(n+m+1)}, \qquad D(R_k)=k^{n+m+2} \in\Z,\qquad k\ge k_0(n,m,\phi). 
	\end{equation}
	Put $F_k:=F_{R_k}/\norm{F_{R_k}}_{L^2}$. Then $R_k\to\infty$, $\norm{F_k}_{L^2}=1$, and $\widehat{F_k}$ is supported in $\{\abs{(\xi,k)}\sim R_k\}$ by \eqref{eq:supp-f}; the limit above with $R=R_k$ is \Cref{Necessary condition for proposition}. This completes the proof of \Cref{Necessary condition for proposition}.
\end{proofof}

\begin{proposition}[point-wise convergence implies maximal estimate]\label{point-wise convergence implies maximal estimate}
	Suppose  for some $s > 0$, the pointwise convergence
	\[
	\lim_{t \to 0} e^{it\Delta} f(x,y) = f(x,y) \quad  \text{holds  for a.e.} \  (x,y), \forall f \in H^s(\mathbb{R}^n \times \mathbb{T}^m).
	\]
	Then the maximal estimate
	\begin{equation}\label{eq:Convg for Hs}
		\left\| \sup_{0 < t \leq 1} \left| e^{it\Delta} f(x,y) \right| \right\|_{L^1_{x,y}\left(B^n(0,1) \times \mathbb{T}^m\right)}
		\lesssim_s
		\|f\|_{H^s(\mathbb{R}^n \times \mathbb{T}^m)}
	\end{equation}
	holds for all $f \in H^s(\mathbb{R}^n \times \mathbb{T}^m)$.
\end{proposition}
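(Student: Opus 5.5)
This is the converse direction, and the standard tool is Stein's maximal principle (the Stein--Nikishin theorem). The plan is to run that argument in the waveguide setting, exploiting translation invariance in the Euclidean variable $x$ together with the compact group structure of $\T^m$.

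\textbf{Step 1: the maximal operator is finite a.e.} Define the maximal operator
\[
T^*f(x,y):=\sup_{0<t\le 1}\abs{e^{it\Delta}f(x,y)}.
\]
By density we may restrict the supremum to rational $t$, after first treating $f$ in a dense class and then passing to limits. For each fixed $t$, $f\mapsto e^{it\Delta}f$ is continuous from $H^s$ into $L^2_{\mathrm{loc}}$, so $T^*$ is a sublinear operator that is continuous in measure on $H^s$. Pointwise convergence for every $f\in H^s(\R^n\times\T^m)$ gives $T^*f(x,y)<\infty$ almost everywhere: since $e^{it\Delta}f(x,y)\to f(x,y)$ as $t\to0$, the function $t\mapsto e^{it\Delta}f(x,y)$ is bounded near $0$, and it is continuous on $[\delta,1]$ after the rational reduction.

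\textbf{Step 2: a weak-type bound via Nikishin's theorem.} Apply Nikishin's theorem, in the form valid for a sublinear operator from a Banach space into $L^0$ of a finite measure space that is continuous in measure. Taking the finite measure space to be $B^n(0,1)\times\T^m$, this yields a weight $w>0$ such that
\[
\abs{\{(x,y)\in B^n(0,1)\times\T^m: T^*f>\lambda w\}}\lesssim \lambda^{-2}\norm{f}_{H^s}^2 .
\]
Stein's refinement removes the weight when the operator commutes with a transitive family of measure-preserving transformations. Here $T^*$ commutes with translations in $y\in\T^m$ and in $x\in\R^n$, and modulation $\widehat f(\xi,k)\mapsto e^{2\pi i x_0\cdot\xi}\widehat f(\xi,k)$ preserves the $H^s$ norm, exactly as already used in the proof of \Cref{application of maximal strichartz}. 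The aim of this step is the unweighted restricted weak-$(2,2)$ estimate
\[
\abs{\{(x,y)\in B^n(0,1)\times\T^m:\ T^*f>\lambda\}}\lesssim_s \lambda^{-2}\norm f_{H^s}^2 .
\]

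\textbf{Main obstacle and how I would handle it.} Stein's theorem is stated on compact groups, but $B^n(0,1)$ is not a group, so translation in $x$ does not preserve the domain. I would first try localization: replace $\R^n$ by a large torus $\T^n_L$ acting on the $x$ variable through the translations above, and apply Stein's argument on the compact group $\T^n_L\times\T^m$ to a cut-off version of the problem. The point to verify is that the cutoff in $x$ commutes with $e^{it\Delta}$ up to errors controlled in $H^s$ for $0<t\le1$, using finite speed at each frequency scale together with tails. If that verification becomes too messy, the fallback is to prove the theorem directly with the weight $w$ from Step 2 and observe that only measure-level information is needed downstream.

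\textbf{Step 3: upgrading to the $L^1$ estimate.} From the weak-$L^2$ bound on the finite-measure set $B^n(0,1)\times\T^m$, we have $L^{2,\infty}\hookrightarrow L^1$ on finite measure spaces. This gives \eqref{eq:Convg for Hs}:
\[
\norm{T^*f}_{L^1(B^n(0,1)\times\T^m)}\lesssim \norm{T^*f}_{L^{2,\infty}}\lesssim_s\norm f_{H^s}.
\]
Combined with \Cref{Necessary condition for proposition}, this $L^1$ estimate then yields \Cref{Necessary condition for theorem}.
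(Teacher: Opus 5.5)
Your skeleton is the same as the paper's. You restrict to rational times, use a.e.\ finiteness of the maximal function, and invoke the Stein--Nikishin maximal principle to get a weak-type $(2,2)$ bound on $X=B^n(0,1)\times\T^m$. You then finish with $\|F\|_{L^1(X)}\le 2|X|^{1/2}\|F\|_{L^{2,\infty}(X)}$. The paper simply cites Stein's principle for the set $X$. You correctly see that this needs justification, since $B^n(0,1)$ carries no transitive measure-preserving action. However, neither of your two workarounds closes the gap.

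\textbf{Plan A fails.} It relies on finite speed of propagation, which the Schr\"odinger flow does not have. In time $t\le 1$ a packet at frequency $N$ travels a distance $\sim N$, and $H^s$ data contain all frequencies, so no single torus $\T^n_L$ makes the cut-off error small uniformly. Moreover, a cut-off operator does not commute with translations of $\T^n_L$, so Stein's theorem does not apply to it. The operator that does commute is the periodized flow $\sum_j\tau_{Lj}e^{it\Delta}$. That is a different operator, and your hypothesis gives its a.e.\ convergence only with exactly the uniform tail control you are trying to prove.

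\textbf{Plan B does not prove the statement.} It keeps Nikishin's exceptional set (or weight), which you never control, so it cannot give \eqref{eq:Convg for Hs} on all of $B^n(0,1)\times\T^m$. At best it could serve \Cref{Necessary condition for theorem} directly, using the pointwise lower bound on $X_R$ with $|X_R|\gtrsim1$. That is not the proposition you were asked to prove.

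\textbf{The missing idea} is a local version of Stein's translation trick, which needs no group structure on the domain.
\begin{enumerate}
\item Apply Nikishin on the larger set $X_2=B^n(0,2)\times\T^m$. For $\varepsilon>0$ this gives $E\subset X_2$ with $|X_2\setminus E|<\varepsilon$ and $|\{w\in E:T^*g(w)>\lambda\}|\le C_\varepsilon\lambda^{-2}\|g\|_{H^s}^2$ for all $g\in H^s$.
\item Let $A=\{z\in X:T^*f(z)>\lambda\}$ and $Y=B^n(0,1)\times\T^m$, and take $h\in Y$. The translate $\tau_hf=f(\cdot-h)$ satisfies $\|\tau_hf\|_{H^s}=\|f\|_{H^s}$ and $T^*(\tau_hf)=(T^*f)(\cdot-h)$. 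Hence $|A\cap(E-h)|\le C_\varepsilon\lambda^{-2}\|f\|_{H^s}^2$.
\item For each $z\in X$, the map $h\mapsto z+h$ is a measure-preserving injection of $Y$ into $X_2$. Therefore
\[
\int_Y |A\setminus(E-h)|\,dh=\int_A\int_Y \mathbf 1_{X_2\setminus E}(z+h)\,dh\,dz\le\varepsilon|A|.
\]
\item Average $|A|=|A\cap(E-h)|+|A\setminus(E-h)|$ over $h\in Y$, noting $|Y|=|B^n(0,1)|$. Taking $\varepsilon\le|B^n(0,1)|/2$ gives $|A|\le 2C_\varepsilon\lambda^{-2}\|f\|_{H^s}^2$.
\end{enumerate}
This is the unweighted weak-type bound you need, and your Step 3 then finishes the proof.

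One smaller point concerns Step 1. Continuity in measure of $T^*$ does not follow from continuity of each $e^{it\Delta}$. It follows from a.e.\ finiteness of $T^*f$ for every $f$, via Banach's principle, so Step 1 should be argued in that order.
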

\begin{proof}
	To prove the maximal estimate, we first introduce the operator $G_s$ defined by
	\[
	\widehat{G_sg}(\xi,k)
	=
	(1+\abs{\xi}^2+\abs{k}^2)^{-s/2}\widehat{g}(\xi,k), \quad (\xi, k) \in \R^n \times \Z^m.
	\]
	By the definition of the Sobolev space on $\R^n\times\T^m$, the operator
	\[
	G_s:L^2(\R^n\times\T^m)\rightarrow H^s(\R^n\times\T^m)
	\]
	is an isometric isomorphism.  Consequently, $g\in L^2(\R^n\times\T^m)$ if and only if $G_sg\in H^s(\R^n\times\T^m).$
	Hence, the hypothesis of the proposition is equivalent to
	\begin{equation}\label{eq:convG}
		\lim_{t\to0}e^{it\Delta}G_sg(x,y)
		=
		G_sg(x,y),
		\qquad
		\text{for a.e. }(x,y)\in\R^n\times\T^m,
	\end{equation}
	for every $g\in L^2(\R^n\times\T^m)$.
	
	We divide the proof into two steps.

	\begin{itemize}
		\item[-] \textbf{Step 1. Weak-type estimate}
	\end{itemize}

	Let $\{t_k\}_{k=1}^{\infty}$ be an enumeration of the positive rational numbers contained in $(0,1]$, and define
	\[
	T^{(*)}g(x,y)
	=
	\sup_{k\ge1}
	\abs{e^{it_k\Delta}g(x,y)}.
	\]
	Since, for every fixed $(x,y)$, the map
	\[
	t\longmapsto e^{it\Delta}g(x,y)
	\]
	is continuous, it follows from \cite[\S12]{Stein1961limitsoperators} that
	\begin{equation}\label{maximal continuous=discrete}
		\sup_{0<t\le1}\abs{e^{it\Delta}g(x,y)}=
		\sup_{k\ge1}\abs{e^{it_k\Delta}g(x,y)}.
	\end{equation}
	Therefore, it is enough to establish the estimate for $T^{(*)}.$ Fix $s>s_c,$ for each $k$, the operator $e^{it_k\Delta}G_s$ is bounded on $L^2(\R^n\times\T^m),$
	since $e^{it\Delta}$ is a unitary on $L^2(\R^n\times\T^m)$ and $G_s$ is bounded on $L^2(\R^n\times\T^m).$ Hence, if $g_j\rightarrow g $ in $L^2(\R^n\times\T^m),$
	we also have
	\[
	e^{it_k\Delta}G_sg_j
	\longrightarrow
	e^{it_k\Delta}G_sg
	\]
	in measure. Now by  assumption \eqref{eq:convG},
	\[
	\lim_{k \to \infty} e^{it_k\Delta}G_sg(x,y)=G_sg(x,y)
	\]
	for almost every $(x,y)\in\R^n\times\T^m$. This implies, $T^{(*)}G_sg(x,y)<\infty$ on a set of positive measure.
	
	Thus by weak-type maximal principle (see \cite[Chapter X, \S3.4]{Stein1993orthogonality}), for the compact set $X=B^n(0,1)\times\T^m,$ there exists a constant $C_s>0$ such that for every $\alpha>0$,
	\begin{equation}\label{eq:weaktype for discrete}
		\abs{
			\left\{
			(x,y)\in X:
			T^{(*)}G_sg(x,y)>\alpha
			\right\}
		}
		\le
		\frac{C_s}{\alpha^2}
		\norm{g}_{L^2(\R^n\times\T^m)}^2.
	\end{equation}
	By \eqref{maximal continuous=discrete} and \eqref{eq:weaktype for discrete},
	\begin{equation}\label{eq:weaktype for continuous}
		\left\|\sup_{0<t\le1}\abs{e^{it\Delta}G_s g(x,y)}  \right\|_{L^{2,\infty}(X)}
		\le C_s
		\norm{g}_{L^2(\R^n\times\T^m)}.
	\end{equation}

	\begin{itemize}
		\item[-] \textbf{Step 2. Lorentz space estimate}:
	\end{itemize}
	
	Since $X$ has finite measure, it is enough to prove that
	\[
	\norm{F}_{L^1(X)}
	\le
	2\abs{X}^{1/2}
	\norm{F}_{L^{2,\infty}(X)}
	\]
	for every $F\in L^{2,\infty}(X)$ (for definition see \eqref{weak Lp space definition}) and here $|X|$ denotes measure of the set $X$.
	
	Let
	\[
	\lambda(t)
	=
	\abs{
		\left\{
		(x,y)\in X:
		\abs{F(x,y)}>t
		\right\}
	}
	\]
	be the distribution function of $F$. By the layer-cake representation,
	\[
	\norm{F}_{L^1(X)}
	=
	\int_0^\infty\lambda(t)\,dt.
	\]
	Fix $A>0$. Splitting the integral gives
	\[
	\norm{F}_{L^1(X)}
	=
	\int_0^A\lambda(t)\,dt
	+
	\int_A^\infty\lambda(t)\,dt.
	\]
	
	Since $\lambda(t)\le \abs{X}$, we have
	\[
	\int_0^A\lambda(t)\,dt
	\le
	A\abs{X}.
	\]
	
	On the other hand, we have
	\[
	\int_A^\infty\lambda(t)\,dt= \int_A^\infty t^2\lambda(t)\frac{dt}{t^2}
	\le
	\norm{F}_{L^{2,\infty}(X)}^2
	\int_A^\infty\frac{dt}{t^2}
	=
	\frac{\norm{F}_{L^{2,\infty}(X)}^2}{A}.
	\]
	
	Combining the above estimates and choosing $A=
	\abs{X}^{-1/2}
	\norm{F}_{L^{2,\infty}(X)}$, we get
	\begin{equation}\label{Holder for weak space}
		\norm{F}_{L^1(X)}\le
		2\abs{X}^{1/2}
		\norm{F}_{L^{2,\infty}(X)}.
	\end{equation}
	
	Since $G_sg=f$ and by \eqref{eq:weaktype for continuous} and \eqref{Holder for weak space}, we conclude that
	\[
	\norm{
		\sup_{0<t\le1}
		\abs{e^{it\Delta}f}
	}_{L^1(B^n(0,1)\times\T^m)}
	\lesssim_s
	\norm{f}_{H^s(\R^n\times\T^m)},
	\]
	which is the desired maximal estimate.
\end{proof}

\begin{proofof}{Necessary condition for theorem}
	We prove this result using contradiction method, if possible for some $s_0 < \frac{n+m}{2(n+m+1)}$, assume
	\[
	\lim_{t \to 0} e^{it\Delta} f(x,y) = f(x,y) \quad \text{for a.e. } (x,y) \in \mathbb{R}^n \times \mathbb{T}^m
	\]
	holds for every $f \in H^{s_0}(\mathbb{R}^n \times \mathbb{T}^m)$.
	
	By \Cref{Necessary condition for proposition}, there exist a sequence $R_k \to \infty$ and functions $F_k \in L^2(\mathbb{R}^n \times \mathbb{T}^m)$ with $\|F_k\|_{L^2} = 1$ and Fourier support contained in $\{|\xi|^2 + |\ell|^2 \sim R_k^2\}$ such that
	\[
	R_k^{-s_0} \left\| \sup_{0 < t < 1} |e^{it\Delta} F_k| \right\|_{L^1(B(0,1) \times \mathbb{T}^m)} \to \infty.
	\]
	Since $\operatorname{supp}\widehat{F_k} \subset \{|(\xi,\ell)| \sim R_k\}$, we have $\|F_k\|_{H^{s_0}(\mathbb{R}^n \times \mathbb{T}^m)} \sim R_k^{s_0} \|F_k\|_{L^2} = R_k^{s_0}$. It follows that
	\[
	\frac{\left\| \sup_{0 < t < 1} |e^{it\Delta} F_k| \right\|_{L^1(B(0,1) \times \mathbb{T}^m)}}{\|F_k\|_{H^{s_0}(\mathbb{R}^n \times \mathbb{T}^m)}} \sim R_k^{-s_0} \left\| \sup_{0 < t < 1} |e^{it\Delta} F_k| \right\|_{L^1(B(0,1) \times \mathbb{T}^m)} \to \infty.
	\]
	However, the assumed almost everywhere convergence implies the maximal estimate
	\[
	\left\| \sup_{0 < t \le 1} |e^{it\Delta} f| \right\|_{L^1(B^n(0,1) \times \mathbb{T}^m)} \lesssim_{s_0} \|f\|_{H^{s_0}(\mathbb{R}^n \times \mathbb{T}^m)}
	\]
	for all $f \in H^{s_0}(\mathbb{R}^n \times \mathbb{T}^m)$ by \eqref{eq:Convg for Hs}. This contradicts the divergence along $F_k$, completing the proof.
\end{proofof}

\section{Sufficient condition for infinite system}\label{sec-suff-inf}
In this section we shall prove \Cref{t:maximal ose T,t:maximal ose wave}.
\begin{remark}[proof strategy]\label{psfs}
	The duality principle \Cref{PL1} converts the orthonormal maximal estimate into a Hilbert--Schmidt bound on $W_1\mathcal E_N\mathcal E_N^*W_2$, computed as the $L^2$-norm of the kernel $K_N$ (\Cref{prop:strichartz for each PN T,prop:strichartz for each PN wave}). The proof of \Cref{t:maximal ose T} strongly relies on the periodic structure of the torus. In the torus case, solutions to the free Schr\"odinger equation are periodic in both time and spatial variables, which allows us to apply Young's convolution inequality together with \Cref{max stri for torus} directly. In contrast, for the waveguide case considered in \Cref{t:maximal ose wave}, solutions to the free Schr\"odinger equation are no longer periodic in time and spatial variables. Consequently, Young's convolution inequality and \Cref{max stri for wave} cannot be applied directly. To overcome this difficulty in the waveguide setting, we exploit the finite covering property of balls centered at the origin, particularly that a ball of radius two can be covered by finitely many unit balls i.e., ball of radius one.
\end{remark}
\begin{proposition}\label{prop:strichartz for each PN T}
	Let $s \geq 0,$ and let $N\in2^{\mathbb N}$ be each dyadic frequency. Then, for every $\varepsilon>0,$
	\begin{equation}\label{eq:strichartz for each PN}
		\Big\|
		\sum_j \lambda_j \bigl|e^{it\Delta}\langle\nabla\rangle^{-s} P_N f_j\bigr|^2
		\Big\|_{L_x^2L_t^\infty(\T^d \times \T)}
		\lesssim_{\varepsilon}
		N^{\frac{d}{2}+\frac{d}{d+2}-2s+\varepsilon}\|\lambda\|_{\ell^2}.
	\end{equation}
	Here $P_N$ is defined in \eqref{lpope} and $(f_j)_j$ is an orthonormal system in $L^2(\T^d),$ and $\lambda=(\lambda_j)_j\in\ell^2$.
\end{proposition}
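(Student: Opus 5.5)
The plan is to apply the duality principle \Cref{PL1} with $p=2$, $q=\infty$ and $\alpha=\alpha'=2$, taking $T=e^{it\Delta}\langle\nabla\rangle^{-s}P_N : L^2(\T^d)\to L^2_xL^\infty_t(\T^d\times\T)$. Then $2p'=4$ and $2q'=2$. So \eqref{eq:strichartz for each PN} is equivalent to the Hilbert--Schmidt bound
\[
\|WTT^*W\|_{\Sp^2}\lesssim_\varepsilon N^{\frac d2+\frac d{d+2}-2s+\varepsilon}\,\|W\|^2_{L^4_xL^2_t(\T^d\times\T)} .
\]

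The operator $TT^*$ is a space-time convolution with kernel
\[
K_N(x,\tau)=\sum_{k\in\Z^d}\psi_N(k)^2\langle k\rangle^{-2s}e^{2\pi i(k\cdot x+|k|^2\tau)}=e^{i\tau\Delta}g_N(x),
\]
where $\widehat{g_N}(k)=\psi_N(k)^2\langle k\rangle^{-2s}$. Hence
\[
\|WTT^*W\|_{\Sp^2}^2=\iint |W(x,t)|^2\,|K_N(x-x',t-t')|^2\,|W(x',t')|^2 .
\]
Bounding $|K_N(x-x',t-t')|$ by $H(x-x'):=\sup_\tau|K_N(x-x',\tau)|$ separates the time integrals. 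With $G(x):=\|W(x,\cdot)\|_{L^2_t}^2$ this gives
\[
\|WTT^*W\|_{\Sp^2}^2\le\langle G,\,G*H^2\rangle\le\|G\|_{L^2_x}^2\,\|H\|_{L^2_x}^2
\]
by Cauchy--Schwarz and Young's inequality on $\T^d$. Since $\|G\|_{L^2_x}=\|W\|^2_{L^4_xL^2_t}$, it remains to show $\|H\|_{L^2_x}\lesssim_\varepsilon N^{\frac d2+\frac d{d+2}-2s+\varepsilon}$.

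For this I invoke \Cref{max stri for torus}. The function $g_N$ has Fourier support in $\{|k|\sim N\}$, so $g_N=\widetilde P_Ng_N$ for a slightly fattened Littlewood--Paley projector; the estimate of \Cref{max stri for torus} holds equally for $\widetilde P_N$, as a sum of boundedly many $P_{N'}$ with $N'\sim N$. Plancherel gives $\|g_N\|_{L^2}\sim N^{d/2}N^{-2s}$ (for $N=1$ this is $O(1)$). Hence
\[
\big\|\sup_{0<\tau\le1}|e^{i\tau\Delta}g_N|\big\|_{L^2_x}\lesssim_\varepsilon N^{\frac d{d+2}+\varepsilon}N^{\frac d2-2s}.
\]

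The main point to check is that the supremum in $H$ runs over all $\tau$ in $\T$ (equivalently $\tau=t-t'\in(-1,1)$), not just $(0,1]$. Time-periodicity handles this: since $|k|^2\in\Z$, $e^{i\tau\Delta}$ is $1$-periodic in $\tau$, so any $\tau\in(-1,0]$ can be replaced by $\tau+1\in(0,1]$. Hence the supremum over all $\tau$ equals the supremum over $\tau\in(0,1]$ (the value $\tau=0$ coincides with $\tau=1$), and the bound above controls $\|H\|_{L^2_x}$. Combining the estimates yields the Hilbert--Schmidt bound, and \Cref{PL1} with $\alpha'=2$ gives \eqref{eq:strichartz for each PN}. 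The hypothesis $s\ge0$ is used only through the size of $\langle k\rangle^{-2s}$ on the annulus.
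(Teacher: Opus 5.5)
Your proposal is correct and follows the paper's proof essentially step for step: \Cref{PL1} with $\alpha=2$, the Hilbert--Schmidt norm computed as the $L^2$ norm of the kernel $W(x,t)K_N(x-x',t-t')W(x',t')$, a supremum in time followed by Young's inequality on $\T^d$, and \Cref{max stri for torus} applied to $K_N$. The paper needs no fattened projector, since it writes $K_N(\cdot,\tau)=e^{i\tau\Delta}P_Nf$ with $\widehat{f}(k)=\psi_N(k)\langle k\rangle^{-2s}$, and it leaves implicit the time-periodicity reduction from $\tau\in(-1,0]$ to $(0,1]$ that you spell out; one small slip is that for \Cref{PL1} with $p=2$, $q=\infty$ the operator $T$ should map into $L^4_xL^\infty_t$, not $L^2_xL^\infty_t$.
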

\begin{proposition}\label{prop:strichartz for each PN wave}
	Let $s \geq 0,$ and let $N\in2^{\mathbb N}$ be each dyadic frequency. Then, for every $\varepsilon>0,$
	\begin{equation}\label{eq:strichartz for each PN wave}
		\Big\|
		\sum_j \lambda_j \bigl|e^{it\Delta}\langle\nabla\rangle^{-s} P_N f_j\bigr|^2
		\Big\|_{L_x^2L_t^\infty(B^n(0,1) \times \T^m \times (0,1])}
		\lesssim_{\varepsilon}
		N^{\frac{d}{2}+\frac{d}{d+2}-2s+\varepsilon}\|\lambda\|_{\ell^2},
	\end{equation}
	Here $P_N$ is defined in \eqref{lpope} and $(f_j)_j$ is an orthonormal system in $L^2(\wg),$ and $\lambda=(\lambda_j)_j\in\ell^2$.
\end{proposition}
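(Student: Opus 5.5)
The plan is to apply the duality principle (\Cref{PL1}) with $p=2$, $q=\infty$, $\alpha=2$ and $\alpha'=2$. With this choice the claimed bound becomes a Hilbert--Schmidt estimate. Set $T_N:=e^{it\Delta}\langle\nabla\rangle^{-s}P_N$, viewed as a map from $L^2(\wg)$ into functions on $X\times I=B^n(0,1)\times\T^m\times(0,1]$. Then \eqref{eq:strichartz for each PN wave} is equivalent to
\[
\|W T_N T_N^* W\|_{\Sp^2}\lesssim_\varepsilon N^{\frac d2+\frac d{d+2}-2s+\varepsilon}\|W\|^2_{L^4_zL^2_t(X\times I)}
\]
for all $W$, since $2p'=4$ and $2q'=2$. (A weight $W_1T_NT_N^*W_2$ with two different weights works equally well.)

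\textbf{Kernel and Hilbert--Schmidt norm.} The operator $T_NT_N^*$ has the convolution kernel $K_N(z-z',t-t')$, where
\[
K_N(z,\tau)=e^{i\tau\Delta}g_N(z),\qquad \widehat{g_N}(\xi)=\psi_N(\xi)^2\langle\xi\rangle^{-2s},\qquad \xi\in\hM .
\]
By Plancherel, $\|g_N\|_{L^2}\lesssim N^{\frac d2-2s}$. Put $a(z):=\|W(z,\cdot)\|^2_{L^2_t(I)}$. Taking the supremum of the kernel in time gives
\[
\|WT_NT_N^*W\|_{\Sp^2}^2=\iint |W(z,t)|^2|K_N(z-z',t-t')|^2|W(z',t')|^2\le \iint_{X\times X} a(z)\,a(z')\,k_N(z-z')\,dz\,dz',
\]
where $k_N(w):=\sup_{|\tau|\le1}|K_N(w,\tau)|^2$. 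Here $w=z-z'$ ranges over $B^n(0,2)\times\T^m$. By Schur's test (equivalently, Young's inequality applied to the restricted kernel), the right side is at most
\[
\|a\|_{L^2}^2\,\|k_N\|_{L^1(B^n(0,2)\times\T^m)}=\|W\|^4_{L^4_zL^2_t}\,\Big\|\sup_{|\tau|\le1}|e^{i\tau\Delta}g_N|\Big\|^2_{L^2(B^n(0,2)\times\T^m)} .
\]
So everything reduces to the single-function bound
\[
\Big\|\sup_{|\tau|\le1}|e^{i\tau\Delta}g_N|\Big\|_{L^2(B^n(0,2)\times\T^m)}\lesssim_\varepsilon N^{\frac d{d+2}+\varepsilon}\|g_N\|_{L^2}\lesssim_\varepsilon N^{\frac d2+\frac d{d+2}-2s+\varepsilon}.
\]
Squaring and taking square roots then gives exactly the exponent claimed in \eqref{eq:strichartz for each PN wave}.

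\textbf{Main obstacle: matching \Cref{max stri for wave}.} \Cref{max stri for wave} only covers times $(0,1]$ and the unit ball $B^n(0,1)$, while the reduction above needs $|\tau|\le1$ and $B^n(0,2)$.
\begin{itemize}
\item[] \emph{Negative times.} The multiplier $\widehat{g_N}$ is real and even, so $K_N(w,-\tau)=\overline{K_N(-w,\tau)}$. Hence the supremum over $\tau\in[-1,0)$ equals the supremum over positive times evaluated at $-w$, and the reflection $w\mapsto -w$ preserves $B^n(0,2)\times\T^m$.
\item[] \emph{The larger ball.} I will cover $B^n(0,2)$ by finitely many unit balls $B^n(x_0,1)$, with the number of balls depending only on $n$. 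On each ball I use the translation trick from the proof of \Cref{application of maximal strichartz}: replace $\widehat{g}$ by $e^{2\pi i x_0\cdot\xi}\widehat g$. This preserves the $L^2$ norm and the frequency support $|\xi|\sim N$, so \Cref{max stri for wave} applies to each piece (the operator $P_N$ there is harmless after inserting a slightly wider projection $\tilde P_N$). Summing over the finitely many balls gives the required bound.
\end{itemize}
This step is where the waveguide case departs from the torus case (\Cref{prop:strichartz for each PN T}). On the torus, full periodicity in $z$ and $\tau$ lets one apply Young's inequality on $\T^{d+1}$ and \Cref{max stri for torus} directly, with no covering argument.
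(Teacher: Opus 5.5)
Your proposal is correct and follows the paper's proof essentially step for step: duality with $\alpha=\alpha'=2$, the Hilbert--Schmidt norm computed as the $L^2$ norm of the kernel $W_1 K_N W_2$, the supremum of the kernel over $t-t'\in(-1,1)$, Young's inequality reducing matters to $\|K_N\|_{L^2_{x,y}L^\infty_t(B^n(0,2)\times\T^m\times(-1,1))}$, and a finite covering of $B^n(0,2)$ by unit balls combined with the modulation $\widehat g\mapsto e^{2\pi i x_0\cdot\xi}\widehat g$ to invoke \Cref{max stri for wave}. The only deviation is for negative times. You use the symmetry $K_N(w,-\tau)=\overline{K_N(-w,\tau)}$, which needs only realness of the multiplier. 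The paper instead shifts $t=-1+t'$ and absorbs the unimodular factor $e^{-2\pi i(|\xi|^2+|k|^2)}$ into the data; both routes work equally well.
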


In order to prove these propositions, we briefly   introduce some terminology.   
Recall $X \times I$ from \eqref{XxI}. Let $\psi_N$ be the function mentioned earlier in \eqref{psi N}.
Let $\cH=L_{z,t}^2(\M \times I),$ for $a  \in L_{\xi}^2(\hM),$ the  \textbf{Fourier extension operator} $\mathcal{E}_N$  is given by 
\begin{equation}\label{FEP}
	\mathcal{E}_N a (z,t) = \int_{\hM} a(\xi)  e^{2\pi i (z \cdot \xi + t|\xi|^2)} \psi_N(\xi)\langle \xi \rangle^{-s} d\xi, \quad(z, t) \in X \times I. 
\end{equation}
By Plancherel's theorem,  it follows that   $\mathcal{E}_{N}:L_{\xi}^2(\hM) \to \cH $ is a bounded operator. In fact, we have 
\begin{align*} 
	\|\mathcal{E}_{N}a\|_{\cH}^{2}&= \int_{I} \|\mathcal{E}_{N}a(\cdot,t)\|^2_{L_{z}^{2}(\M)} \, dt =\int_{I} \|\widehat{\mathcal{E}_{N}a(\cdot,t)}(\xi)\|^2_{L_{\xi}^{2}(\hM)} \, dt \lesssim \|a\|_{L_{\xi}^{2}(\hM)}^{2}.
\end{align*}    
The dual\footnote{The dual operator of $\mathcal{E}_N$ means that
	$\langle \mathcal{E}_N a, F \rangle_{\cH} = \langle a, \mathcal{E}_N^* F \rangle_{L^2(\hM)}
	$
	holds for all $a$ and $F$} of  the operator $\mathcal{E}_N$ is denoted by $\mathcal{E}^*_N$-so called the \textbf{Fourier restriction operator}. Specifically, $\mathcal{E}_N^*: \cH \to L_{\xi}^2(\hM)$ is
given by
\begin{equation}\label{FRO}
	F \mapsto \mathcal{E}_N^* F (\xi) =
	\int_{X \times I} F(z, t) e^{-2\pi i (z \cdot \xi + t|\xi|^2)} \psi_N(\xi)\langle \xi \rangle^{-s} \, dz dt.  
\end{equation} 
We note that a composition of $\mathcal{E}_N$ and $\mathcal{E}^*_N$  gives us a convolution operator on $\cH$. In fact, we have 
\begin{flalign}\label{P1}
	\mathcal{E}_N \circ \mathcal{E}_N^* F(z,t) &= \int_{\hM } \mathcal{E}_N^*F(\xi) e^{2\pi i (z \cdot \xi + t|\xi|^2)} \psi_N(\xi)\langle \xi \rangle^{-s} \, d\xi  \nonumber \\
	&=\int_{ \hM }\int_{X \times I} F(z',t') e^{-2\pi i (z' \cdot \xi + t'|\xi|^2)}e^{2\pi i (z \cdot \xi + t|\xi|^2)} \left(\psi_N(\xi)\langle \xi \rangle^{-s}\right)^{2} \, d\xi \, dz' \, dt' \nonumber \\
	&=\int_{X \times I} F(z',t')\int_{ \hM }e^{2\pi i [(z-z') \cdot \xi + (t-t')|\xi|^2]} \left(\psi_N(\xi)\langle \xi \rangle^{-s}\right)^{2} \, d\xi \, dz' \, dt' \nonumber \\
	& =  \int_{X \times I} K_N (z - z', t - t') F(z', t') \, dz' dt'.
\end{flalign}  
where 
\begin{equation}\label{kernel of extension}
	K_N (z,t) = \int_{\hM} e^{2\pi i (z \cdot \xi + t |\xi|^2)} \left(\psi_N(\xi)\langle \xi \rangle^{-s}\right)^{2} \, d\xi.
\end{equation}

\begin{remark}\label{usrev} We can restate inequalities \eqref{eq:strichartz for each PN} and \eqref{eq:strichartz for each PN wave} as follows: it holds for every $N > 1$, any $\lambda \in \ell^{\alpha'}$, and any orthonormal system $(f_j)_j \subset L_{z}^2(\M)$ if and only if
	\begin{equation}\label{P2}
		\left\| \sum_j \lambda_j |\mathcal{E}_N a_j|^2 \right\|_{L_z^2 L_t^\infty(X \times I)} \leq  C N^{\frac{d}{2}+\frac{d}{d+2}-2s+\varepsilon}\|\lambda\|_{\ell^2}
	\end{equation}
	holds for any $N > 1$, $\lambda \in \ell^{2}$, and any ONS $(a_j)_j \subset L_{\xi}^{2}(\hM)$. This is
	because if we let $a_j = \widehat{f_j}$, then the orthonormality of $(f_j)_j$ in $L_{z}^2(\M)$ is equivalent
	to the one of $(a_j)_j$ in $L_{\xi}^{2}(\hM)$ and \[
	e^{it\Delta}\langle\nabla\rangle^{-s}P_N f_j=\mathcal E_N a_j.
	\]
\end{remark}

\begin{proofof}{prop:strichartz for each PN T}
	Using \Cref{usrev}, we need to prove \eqref{P2}. By \Cref{PL1} it is equivalent to the following:
	\begin{equation}\label{dual strichartz}
		\|W_1 \, \mathcal E_N \mathcal E_N^\ast W_2\|_{\Sp^2(L^2(\T^{d+1}))} 
		\lesssim_{\varepsilon} 
		N^{\frac{d}{2}+\frac{d}{d+2}-2s+\varepsilon}
		\|W_1\|_{L_x^4 L_t^2}
		\|W_2\|_{L_x^4 L_t^2}.
	\end{equation}
	We first find the kernel of the operator
	$W_1 \mathcal E_N \mathcal E_N^\ast W_2$. In fact, for $f \in L^{2}(X \times I)$, here $X=\T^d$ and $I=\T.$ We have
	\[
	\begin{aligned}
		W_1 \mathcal E_N \mathcal E_N^\ast W_2 f(x,t)
		&= W_1(x,t) \int_{ X \times I} K_N(x-x',t-t') W_2(x',t') f(x',t') \, dx' \, dt' \\
		&= \int_{X \times I}
		W_1(x,t) K_N(x-x',t-t') W_2(x',t') f(x',t') \, dx' \, dt' \\
		&= \int_{ X \times I} \widetilde{K}(x,t,x',t') f(x',t') \, dx' \, dt',
	\end{aligned}
	\]
	where the kernel is given by
	\[
	\widetilde{K}(x,t,x',t') = W_1(x,t) K_N(x-x',t-t') W_2(x',t').
	\]
	Since $\Sp^{2}$-norm is the Hilbert--Schmidt norm and is equal to $L^2$-norm of the kernel, so LHS of \eqref{dual strichartz}, we have
	\begin{align}\label{C2 norm LHS}
		\|W_{1}\mathcal E_N \mathcal E_N^\ast W_{2}\|_{\Sp^{2}(L^{2}(\T^{d+1}))}^{2}
		&=\|\widetilde{K}\|_{L^{2}(\T^{d+1} \times \T^{d+1})}^{2} \nonumber \\
		&=\int_{\T \times \T}\int_{\T^d \times \T^d}
		|W_{1}(x,t)K_{N}(x-x',t-t')W_{2}(x',t')|^{2}\,dx\,dt\,dx'\,dt' \nonumber \\
		&\leq \int_{\T^d \times \T^d} \|W_{1}(x, \cdot)\|^2_{L^2_t} \|K_{N}(x-x', \cdot)\|^2_{L^\infty_t}\|W_{2}(x', \cdot)\|^2_{L^2_t} \,dx\,dx' \nonumber \\
		&\leq  \|\|W_{1}\|^2_{L^2_t} \|_{L^{2}_x} \|\|K_{N}\|^2_{L^\infty_t}\|_{L^{1}_x} \|\|W_{2}\|^2_{L^2_t}\|_{L^{2}_x}   \nonumber\\
		&= \|W_{1}\|^2_{L^4_x L^2_t}  \|K_{N}\|^2_{L^2_x L^\infty_t} \|W_{2}\|^2_{L^4_x L^2_t}
	\end{align}

	where
	\[
	K_N(x,t)
	=
	\sum_{n\in\mathbb Z^d}
	e^{2\pi i(x\cdot n + t|n|^2)}
	\langle n\rangle^{-2s}\psi_N^2(n).
	\]
	Now, by \Cref{max stri for torus}, we get
	\begin{align}\label{KN estimate}
		\|K_{N}\|_{L^2_x L^\infty_t}&=\left\| \sup_{0<t\le1}\left|\sum_{n\in\mathbb Z^d}
		e^{2\pi i(x\cdot n + t|n|^2)}
		\langle n\rangle^{-2s}\psi_N^2(n) \right|\right\|_{L^2_x} \nonumber\\
		&\lesssim_{\varepsilon} 
		N^{\frac{d}{d+2}+\varepsilon} \left(\sum_{n \in \Z^d} \langle n\rangle^{-4s}\psi_N^2(n)\right)^{\frac{1}{2}}  \nonumber\\
		&=
		N^{\frac{d}{d+2}+\varepsilon} \left(\sum_{|n| \sim N} \langle n\rangle^{-4s}\psi_N^2(n)\right)^{\frac{1}{2}}\nonumber\\
		&\lesssim 
		N^{\frac{d}{2}+\frac{d}{d+2}-2s+\varepsilon}.
	\end{align}
	Using \eqref{KN estimate} in \eqref{C2 norm LHS}, we get
	$$
	\|W_1\,\mathcal E_N\mathcal E_N^\ast W_2\|_{\Sp^2(L^2(\mathbb T^{d+1}))}
	\lesssim_{\varepsilon}  N^{\frac{d}{2}+\frac{d}{d+2}-2s+\varepsilon}
	\|W_1\|_{L_x^4 L_t^2}
	\|W_2\|_{L_x^4 L_t^2}.
	$$
\end{proofof}
\begin{proofof}{prop:strichartz for each PN wave}
	Using \Cref{usrev}, we need to prove \eqref{P2}. By \Cref{PL1} it is equivalent to the following:
	\begin{equation}\label{dual strichartz wave}
		\|W_1 \, \mathcal E_N \mathcal E_N^\ast W_2\|_{\Sp^2(L^2(X \times (0,1]))}
		\lesssim_{\varepsilon}
		N^{\frac{d}{2}+\frac{d}{d+2}-2s+\varepsilon}
		\|W_1\|_{L_x^4 L_t^2}
		\|W_2\|_{L_x^4 L_t^2}.
	\end{equation}
	We first find the kernel of the operator
	$W_1 \mathcal E_N \mathcal E_N^\ast W_2$. In fact, for $f \in L^{2}(X \times (0,1])$, we have
	\[ 
	\begin{aligned}
		&W_1 \mathcal E_N \mathcal E_N^\ast W_2 f(x,y,t)
		= W_1(x,y,t) K_N * (W_2 f)(x,y,t) \\
		&= W_1(x,y,t) \int_{ X \times (0,1]} 
		K_N(x-x',y-y',t-t') W_2(x',y',t') f(x',y',t') \, dx' \, dy' \, dt' \\
		&= \int_{ X \times (0,1]}
		W_1(x,y,t)\, K_N(x-x',y-y',t-t')\, W_2 (x',y',t')\, f(x',y',t') \, dx' \, dy' \, dt' \\
		&= \int_{ X \times (0,1]} 
		\widetilde{K}(x,y,t,x',y',t')\, f(x',y',t') \, dx' \, dy' \, dt',
	\end{aligned}
	\]
	where the kernel is given by
	\[
	\widetilde{K}(x,y,t,x',y',t') 
	= W_1(x,y,t)\, K_N(x-x',y-y',t-t')\, W_2(x',y',t').
	\]
	Since $\Sp^{2}$-norm is the Hilbert--Schmidt norm and is equal to $L^2$-norm of the kernel,
	by using the fact that 
	\[
	\mathbf{1}_{\{|x|\le1\}} \cdot \mathbf{1}_{\{|x'|\le1\}} \cdot \mathbf{1}_{\{|x-x'|\le2\}}
	= \mathbf{1}_{\{ |x|\le1,\ |x'|\le1.\}}, \qquad \text{ for } x,x' \in \R^n ,
	\] and Young's convolution inequality  
	\begin{align}\label{C2 norm LHS wave}
		&\|W_{1}\mathcal E_N \mathcal E_N^\ast W_{2}\|_{\Sp^{2}(L^{2}(I \times B^n(0,1) \times \T^m))}^{2}
		=\|\widetilde{K}\|_{L^{2}((I \times B^n(0,1) \times \T^m)^2)}^{2} \nonumber \\
		&=\int_{(0,1]^2}
		\int_{X \times X}
		|W_{1}(x,y,t)\,K_{N}(x-x',y-y',t-t')\,W_{2}(x',y',t')|^{2} dx\,dx'\,dy\,dy'\,dt\,dt' \nonumber \\
		&\le \int_{X \times X}
		\|W_{1}(x,y,\cdot)\|^2_{L^2_t((0,1])}
		\|K_{N}(x-x',y-y',\cdot)\|^2_{L^\infty_t((-1,1))} \nonumber \\
		& \qquad \qquad \times \|W_{2}(x',y',\cdot)\|^2_{L^2_t((0,1])}
		\,dx\,dx'\,dy\,dy' \nonumber \\
		&\leq
		\int_{B^n(0,1)\times B^n(0,1)}
		\Big\|
		\|W_{1}(x,\cdot,\cdot)\|^2_{L^2_t}
		\Big\|_{L^{2}_y(\T^m)} \Big\|
		\|K_{N}(x-x',\cdot,\cdot)\|^2_{L^\infty_t((-1,1))}
		\Big\|_{L^{1}_y(\T^m)}
		\nonumber\\
		&\qquad\qquad \times
		\Big\|
		\|W_{2}\|^2_{L^2_t}
		\Big\|_{L^{2}_y(\T^m)}
		\, dx \, dx' \nonumber \\
		&= \|W_{1}\|^2_{L^4_{x,y} L^2_t(X \times (0,1])}
		\|K_{N}\|^2_{L^2_{x,y} L^\infty_t((-1,1) \times B^n(0,2) \times \T^m)}
		\|W_{2}\|^2_{L^4_{x,y} L^2_t(X \times (0,1])}.
	\end{align}
	where
	\[
	K_N(x,y,t)
	=
	\sum_{k\in\mathbb Z^m}
	\int_{\mathbb R^n}
	e^{2\pi i (x\cdot \xi + y\cdot k + t(|\xi|^2+|k|^2))}
	\langle (\xi,k)\rangle^{-2s}
	\psi_N^2(\xi,k)
	\, d\xi.
	\]
	Note that 
	\begin{align*}
		\|K_{N}\|_{L^2_{x,y} L^\infty_t((-1,1) \times B^n(0,2) \times \T^m)} &\leq \|K_{N}\|_{L^2_{x,y} L^\infty_t((-1,0] \times B^n(0,2) \times \T^m)} +\|K_{N}\|_{L^2_{x,y} L^\infty_t((0,1] \times B^n(0,2) \times \T^m)} \\
		&=:I+ II
	\end{align*}
	Let $\{B^n(x,1): x \in \overline{B^n(0,2)}\},$ since $\overline{B^n(0,2)}$ is compact so there finitely many 
	$x_i$'s in $\overline{B^n(0,2)}$ such $B^n(0,2) \subset \bigcup_{i} B^n(x_i,1).$ In order to invoke \Cref{max stri for wave}, we perform 
	change of variable $t=-1+t'$ and $x=x_i+x'$ with $x' \in B^n(0,1)$, and consequently we obtain 
	\begin{align*}
		&I^2=\|K_{N}\|^2_{L^2_{x,y} L^\infty_{t}((-1,0] \times B^n(0,2) \times \T^m)} \\
		& \leq \sum_{i} \|K_{N}\|^2_{L^2_{x,y} L^\infty_{t}((-1,0] \times B^n(x_i,1) \times \T^m)} \\
		&=\sum_{i} \|K_{N}(x_i+x',y,-1+t')\|^2_{L^2_{x',y} L^\infty_{t'}((0,1] \times B^n(0,1) \times \T^m)} \\
		&= \sum_i
		\Bigg\|
		\sup_{0<t'\le1}
		\Bigg|
		\sum_{k\in\mathbb Z^m}
		\int_{\R^n}
		e^{2\pi i (x'\cdot \xi + y\cdot k + t'(|\xi|^2+|k|^2))} \\
		&\qquad\qquad\qquad \times
		\langle (\xi,k)\rangle^{-2s}
		\psi_N^2(\xi,k)
		e^{2\pi i (x_i\cdot \xi-(|\xi|^2+|k|^2))}
		\, d\xi
		\Bigg|
		\Bigg\|^2_{L^2_{x',y}(B^n(0,1)\times\T^m)}  \\
		&\lesssim_{\varepsilon} \sum_{i} N^{\frac{2d}{d+2}+\varepsilon} \left(\sum_{k \in \Z^m} \int_{\R^n} \langle (\xi,k)\rangle^{-4s}\psi_N^2(\xi,k) \, d\xi \right) \\
		&\lesssim_{\varepsilon} \sum_{i} N^{d+\frac{2d}{d+2}-4s +\varepsilon} \\
		&\lesssim_{\varepsilon,d}  N^{d+\frac{2d}{d+2}-4s +\varepsilon}.
	\end{align*}
Similarly, we get 
	\begin{equation*}
		II \lesssim_{\varepsilon,d}  N^{\frac{d}{2}+\frac{d}{d+2}-2s +\varepsilon}
	\end{equation*}
	\begin{equation}
		\label{KN estimate wave}
		\|K_{N}\|_{L^2_{x,y} L^\infty_t((-1,1) \times B^n(0,2) \times \T^m)} \lesssim_{\varepsilon,d}  N^{\frac{d}{2}+\frac{d}{d+2}-2s +\varepsilon}
	\end{equation}
Using \eqref{KN estimate wave} in \eqref{C2 norm LHS wave}, we get
	$$
	\|W_1\,\mathcal E_N\mathcal E_N^\ast W_2\|_{\Sp^2(L^2(X \times (0,1]))}
	\lesssim_{\varepsilon}
	N^{\frac{d}{2}+\frac{d}{d+2}-2s+\varepsilon}
	\|W_1\|_{L_x^4 L_t^2}
	\|W_2\|_{L_x^4 L_t^2}.
	$$
\end{proofof}

\begin{proof}[\textbf{Proof of \Cref{t:maximal ose T,t:maximal ose wave}}]
	Since $\alpha'\le2$ we have
$\ell^{\alpha'}\subseteq\ell^2$ with
$\|\lambda\|_{\ell^2}\le\|\lambda\|_{\ell^{\alpha'}}$, so it suffices to prove the
case $\alpha'=2$. The estimates \eqref{e:max ose T } and \eqref{e:max ose wave} are equivalent to
	\[
	\Big\|
	\sum_j \lambda_j \bigl|e^{it\Delta}\langle\nabla\rangle^{-s} f_j\bigr|^2
	\Big\|_{L_x^2L_t^\infty(X \times I)}
	\le C\|\lambda\|_{\ell^2},
	\]
	for any orthonormal system $(f_j)_j$ in $L^2(\M),$
	where $X \times I$ is as in \eqref{XxI}. Let us define 
	\begin{equation} \label{sigma for T and wave}
		\sigma= 
		\frac{d}{2}+\frac{d}{d+2} .
	\end{equation}
	Take the dyadic family $\{\psi_N\}_{N\in2^{\mathbb N}}$ defined in \eqref{psi N}. Using the facts that
	\[
	\sum_{N\in2^{\mathbb N}}\psi_N=1,
	\qquad
	\operatorname{supp}\psi_N=\{\xi:\ |\xi|\sim N\},
	\]
	and applying the triangle inequality together with Minkowski’s inequality, we obtain
	\[
	\Big\|
	\sum_j \lambda_j \bigl|e^{it\Delta}\langle\nabla\rangle^{-s} f_j\bigr|^2
	\Big\|_{L_x^2L_t^\infty(X \times I)}
	\le
	\Biggl(
	\sum_{N\in2^{\mathbb N}}
	\Big\|
	\sum_j \lambda_j \bigl|e^{it\Delta}\langle\nabla\rangle^{-s} P_N f_j\bigr|^2
	\Big\|_{L_x^2L_t^\infty(X \times I)}^{1/2}
	\Biggr)^2.
	\]
	Now by \Cref{prop:strichartz for each PN T,prop:strichartz for each PN wave}, we obtain
	\[
	\Big\|
	\sum_j \lambda_j \bigl|e^{it\Delta}\langle\nabla\rangle^{-s} f_j\bigr|^2
	\Big\|_{L_x^2L_t^\infty(X \times I)}
	\lesssim_\varepsilon
	\Biggl(
	\sum_{N\in2^{\mathbb N}}
	N^{\frac{1}{2}(\sigma-2s+\varepsilon)}
	\Biggr)^2 \|\lambda\|_{\ell^2}.
	\]
	If  $\sigma-2s <0$, or equivalently $s>\frac{\sigma}{2}$, choose $\varepsilon>0$ so that
$\sigma-2s+\varepsilon<0$. Then the dyadic series converges:
	\[
	\sum_{N\in2^{\mathbb N}} N^{\frac{1}{2}(\sigma-2s+\varepsilon)}<\infty .
	\]
	This completes the proof.
\end{proof}
\subsection{Proof of \Cref{c:pointwise T,c:pointwise wave}}\label{sec7}
We begin by recalling the Sobolev-Schatten class introduced in \Cref{schp}. For $s\ge0$, let
\[ \Sp^{\alpha',s}:=\left\{\gamma\in\Com(H^{-s}(\M),H^{s}(\M)):\|\langle\nabla\rangle^{s}\gamma\langle\nabla\rangle^{s}\|_{\Sp^{\alpha'}(L^2(\M))}<\infty \right\},\]
where $\Com(H^{-s}(\M),H^{s}(\M))$ denotes the space of compact operators from $H^{-s}(\M)$ to $H^{s}(\M)$.
If $\gamma_0\in\Sp^{\alpha',s}$ is self-adjoint, then by the finite-rank approximation, there exists a sequence $(\lambda_j)_j\in\ell^{\alpha'}$ and an orthonormal system $(f_j)_j$ in $L^2(\M)$ such that
\[
\gamma_0^N
=
\sum_{j=1}^N
\lambda_j
\langle\nabla\rangle^{-s}
|f_j\rangle\langle f_j|
\langle\nabla\rangle^{-s},
\]
and
\[
\lim_{N\to\infty}
\|
\langle\nabla\rangle^{s}
(\gamma_0^N-\gamma_0)
\langle\nabla\rangle^{s}
\|_{\Sp^{\alpha'}(L^2(\M))}
=0.
\]

The density function associated with $\gamma_0^N$ is defined by
\[
\rho_{\gamma_0^N}(z)
=
\sum_{j=1}^{N}
\lambda_j
|\langle\nabla\rangle^{-s}f_j(z)|^2.
\]

Let
\[
\gamma(t)=e^{it\Delta}\gamma_0e^{-it\Delta}
\]
be the solution of \eqref{Hartree}. Recall \eqref{XxI} \begin{equation*}
	X =\begin{cases}
		\T^d  \quad &if \quad \M=\T^d \\
		B^n(0,1) \times \T^m  \quad &if \quad  \M=\R^n \times \T^m
	\end{cases}.
\end{equation*}
We next explain how to define the density functions $\rho_{\gamma_0}$ and $\rho_{\gamma(t)}$. To this end, we first prove that for $\frac{d}{4}\le s<\frac{d}{2}$,
\begin{equation}\label{e:lieb-sobolev}
	\bigg\|
	\sum_j
	\lambda_j
	|\langle\nabla\rangle^{-s}f_j|^2
	\bigg\|_{L^2(X)}
	\lesssim
	\|\lambda\|_{\ell^{\alpha'}}
\end{equation}
holds for every $\alpha'<2$, every $(\lambda_j)_j\in\ell^{\alpha'}$, and every orthonormal system $(f_j)_j$ in $L^2(\M)$.
By \Cref{lem:lieb-sobolev-product} with $p=2$ gives LHS of \eqref{e:lieb-sobolev},
\begin{equation}\label{e:Lieb_Sobolev}
	\bigg\| \sum_j \lambda_j |\langle \nabla \rangle^{-s} f_j|^2 \bigg\|_{L^2(X)}  \lesssim \| \lambda \|_{\ell^{2,1}}.
\end{equation}

In view of the inclusion\footnote{
	If $\alpha' < 2$ and $(\lambda_j^*)_j$ is the sequence $(|\lambda_j|)_j$ permuted in a decreasing order, we have
	$
	\| \lambda \|_{\ell^\alpha} \lesssim (\sum_{j \geq 1} (\lambda_j^*)^{\alpha} j^{\alpha/2} \cdot j^{-\alpha/2})^{1/\alpha} \lesssim  \sup_{j \geq 1} j^{1/2} \lambda_j^* = \|\lambda\|_{\ell^{2,\infty}}
	$
	and therefore, by duality, $\ell^{\alpha'} \subseteq \ell^{2,1}$. } $\ell^{\alpha'} \subseteq \ell^{2,1},$ we get the desired estimate \eqref{e:lieb-sobolev}.

We first note that in the finite-rank case; that is, for each fixed $N \in \mathbb{N},$ we have $\lim_{t \to 0}\rho_{\gamma^{N}(t)}(z)=\rho_{\gamma_{0}^{N}}(z)$ for a.e $z \in X$ holds by \Cref{ptonT,genind}.

We now fix $\alpha'<2$ and approximate $\gamma_0 = \sum_{j=1}^\infty \lambda_j \langle \nabla \rangle^{-s}\lvert f_j \rangle \langle f_j \rvert \langle \nabla \rangle^{-s} \in \Sp^{\alpha',s}$, for $\lambda \in \ell^{\alpha'}$ and orthonormal vectors $f_j \in L^2(\M)$, by the sequence of finite-rank operators $(\gamma_0^N)_{N\ge1}$ given by $\gamma_0^N = \sum_{j=1}^N \lambda_j \langle \nabla \rangle^{-s}\lvert f_j \rangle \langle f_j \rvert \langle \nabla \rangle^{-s}$. For $M > N$, we obtain
\[
\| \rho_{\gamma_0^N} - \rho_{\gamma_0^M} \|_{L^2(X)} = \bigg\| \sum_{j=N+1}^M \lambda_j |\langle \nabla \rangle^{-s}f_j|^2 \bigg\|_{L^2(X)} \lesssim \bigg( \sum_{j=N+1}^M |\lambda_j|^{\alpha'} \bigg)^{\frac1{\alpha'}}
\]
from \eqref{e:lieb-sobolev}, and therefore $(\rho_{\gamma_0^N})$ is a Cauchy sequence in $L^2(X)$.
Thus, we define $\rho_{\gamma_0} = \sum_{j=1}^\infty \lambda_j |\langle \nabla \rangle^{-s}f_j|^2 \in L^2(X)$ as the limit of $(\rho_{\gamma_0^N})$ in $L^2(X)$. Since orthonormality of $(f_j)_j$ is preserved under the action of $e^{it\Delta}$ for each $t \in \mathbb{R}$, we may repeat the above to define the density function $\rho_{\gamma(t)} = \sum_{j=1}^\infty \lambda_j|\langle \nabla \rangle^{-s}e^{it\Delta} f_j |^2 \in L^{2}(X)$. \\
Let $\alpha'<2,$ $\frac{d}{4}+\frac{d}{2(d+2)}<s<\frac{d}{2}$ and $\gamma_0\in \Sp^{\alpha',s}$, and let $\gamma(t) = e^{it\Delta} \gamma_0 e^{-it\Delta}$. \\
To prove Corollaries \ref{c:pointwise T} and \ref{c:pointwise wave}, it suffices to prove
\begin{equation} \label{e:pointwisegoal}
	|\{z \in X: \limsup_{t \to 0}{|\rho_{\gamma(t)}(z)-\rho_{\gamma_{0}}(z)|} >k^{-1}\}|=0
\end{equation}
for each integer $k \geq 1,$ and $|U|$ is the Lebesgue measure of the set $U.$ And to see this, we fix $\varepsilon>0$ and note
\begin{align*}
	&\left|\left\{z\in X:\limsup_{t\to 0}
	\left|\rho_{\gamma(t)}(z)-\rho_{\gamma_0}(z)\right|>k^{-1}\right\}\right| \\
	&\leq 
	\left|\left\{z\in X:
	\sup_{t\in(0,1]}
	\left|\rho_{\gamma(t)}(z)-\rho_{\gamma_N(t)}(z)\right|
	>(3k)^{-1}\right\}\right| \\
	&\quad +
	\left|\left\{z\in X:
	\limsup_{t\to 0}
	\left|\rho_{\gamma^N(t)}(z)-\rho_{\gamma_0^N}(z)\right|
	>(3k)^{-1}\right\}\right| \\
	&\quad +
	\left|\left\{z\in X:
	\left|\rho_{\gamma_0^N}(z)-\rho_{\gamma_0}(z)\right|
	>(3k)^{-1}\right\}\right| \\
	&=: M_1+M_2+M_3 ,
\end{align*}
where $N$ is to be chosen momentarily. \\
For \(M_3\), by Chebyshev’s inequality and \eqref{e:lieb-sobolev}, we have
\[
M_3 \leq (3k)^2 \|\rho_{\gamma_0^N}-\rho_{\gamma_0}\|_{L^2(X)}^2 < \varepsilon
\]
if we take \(N=N(\varepsilon,k)\) sufficiently large. For \(M_1\), we use Chebyshev’s inequality and \Cref{t:maximal ose T,t:maximal ose wave} to estimate
\[
M_1 \leq (3k)^2 \|\rho_{\gamma(t)}-\rho_{\gamma^N(t)}\|_{L_x^2L_t^\infty(X \times (0,1])}^2
\lesssim
(3k)^2 \left(\sum_{j>N} |\lambda_j|^{\alpha'} \right)^{\frac{2}{\alpha'}}
< \varepsilon,
\]
for \(N=N(\varepsilon,k)\) sufficiently large. For \(M_2\), by the above observation in the finite-rank case, it follows that \(M_2=0\) for any choice of \(N\). Hence, we obtain \eqref{e:pointwisegoal}.

Now in the case of waveguide manifold i.e., when $\M=\R^n \times \T^m, $ we have already shown \eqref{e:pointwisegoal} on $X=B^n(0,1)\times\mathbb T^m.$ The same argument applies to any translated set \(B^n(x_0,1)\times\mathbb T^m\).
Let $x_0\in\mathbb R^n$ and write $x=x_0+x_1,$ where
$x_1\in B^n(0,1)$. Define
$$
\hat g_j(\xi,k)=e^{2\pi i x_0\cdot \xi}\hat f_j(\xi,k),
$$
so $(g_j)_j$ is an orthonormal system if and only if $(f_j)_j$ is an orthonormal system, and note that
$$
e^{it\Delta}f_j(x,y)=e^{it\Delta}g_j(x_1,y).
$$
By \Cref{t:maximal ose wave},
$$
\|\sum_j\lambda_j|e^{it\Delta}f_j|^2\|_{L_x^2L_t^\infty(B^n(x_0,1)\times\mathbb T^m\times[0,1])}
=
\|\sum_j\lambda_j|e^{it\Delta}g_j|^2\|_{L_x^2L_t^\infty(B^n(0,1)\times\mathbb T^m\times[0,1])}
\leq C\|\lambda\|_{\ell^{\alpha'}}.
$$
Therefore,
$$
|\{z\in B^n(x_0,1)\times\mathbb T^m : \limsup_{t\to 0}|\rho_{\gamma(t)}(z)-\rho_{\gamma_0}(z)|>k^{-1}\}|=0.
$$
which finishes the proof.

\section{Necessary condition for infinite system}\label{sec-necc-inf}

\begin{proofof}{necessary for infinite}
	We prove \Cref{necessary for infinite} by contradiction, if possible let $\gamma_0 \in \Sp^{\alpha',s}(L^2(\M))$ be self-adjoint with $\alpha' \geq 1$ and $s < \frac{d}{2(d+1)}.$ Suppose that the densities
	$\rho_{\gamma(t)}(z)$ and $\rho_{\gamma_0}(z)$ are well defined, and they satisfy
	pointwise convergence 
	\begin{equation}\label{density convergence corolary}
		\lim_{t\to0} \rho_{\gamma(t)}(z) = \rho_{\gamma_0}(z)\quad { for \ \rm a.e.}\;\;\; z\in\M.
	\end{equation} 
	Recall that  the Bessel potential operator $\langle\nabla\rangle^{-s}:L^2(\M)\longrightarrow H^s(\M)$,
	which is a self-adjoint isometric isomorphism, with inverse
	$\langle\nabla\rangle^{s}:H^s(\M)\longrightarrow L^2(\M)$. In particular, for
	$f\in L^2(\M)$,
	\[
	\|\langle\nabla\rangle^{-s}f\|_{H^s(\M)}=\|f\|_{L^2(\M)},
	\]
	and a sequence $(f_j)_{j\ge1}\subset L^2(\M)$ is orthonormal in $L^2(\M)$ if
	and only if $(\langle\nabla\rangle^{-s}f_j)_{j\ge1}$ is orthonormal in $H^s(\M)$.
	
	Let $F\in H^s(\M)$ be a function for which
	\begin{equation}\label{existence of non convergence function}
		\lim_{t\to0} e^{it\Delta}F(z) \neq F(z) \qquad \text{for a.e. } z\in\M,
	\end{equation}
	whose existence is guaranteed by \Cref{Necessary condition for theorem} and \Cref{Necessary condition for torus}.
	We will divide the proof into three steps:
	\begin{itemize}
		\item [--] \textbf{Packing $F$ into an operator}
	\end{itemize}
	Set $f_1:=\langle\nabla\rangle^s F\in L^2(\M)$, so that
	$\|f_1\|_{L^2(\M)}=\|F\|_{H^s(\M)}$, which we normalize to equal $1$. We
	extend $f_1$ to an $L^2(\M)$-orthonormal sequence $(f_j)_{j\ge1}\subset
	L^2(\M)$ by the Gram--Schmidt orthonormalization procedure, and set
	\[
	g_j:=\langle\nabla\rangle^{-s}f_j\in H^s(\M), \qquad j\ge1,
	\]
	so that $(g_j)_{j\ge1}$ is $H^s(\M)$-orthonormal with $g_1=\langle\nabla\rangle^{-s}f_1=F$.
	Choose
	\[
	\lambda_1=1,\qquad \lambda_j=0\ \ (j\ge2).
	\]
	Define
	\[
	\gamma_0:=\sum_{j=1}^{\infty}\lambda_j\,\langle\nabla\rangle^{-s}|f_j\rangle\langle f_j|\langle\nabla\rangle^{-s}
	=\langle\nabla\rangle^{-s}|f_1\rangle\langle f_1|\langle\nabla\rangle^{-s}.
	\]
	Then $\gamma_0$ is self-adjoint, and $\langle\nabla\rangle^s\gamma_0\langle\nabla\rangle^s=|f_1\rangle\langle f_1|$
	is a rank-one operator, hence lies in $\Sp^{\alpha'}(L^2(\M))$ for every
	admissible $\alpha'$; thus $\gamma_0\in\Sp^{\alpha',s}$. Its density is
	\[
	\rho_{\gamma_0}(z)=|g_1(z)|^2=|F(z)|^2, \qquad z\in\M.
	\]
	
	Under the free Schr\"odinger evolution, since $e^{it\Delta}$ and
	$\langle\nabla\rangle^{-s}$ are both Fourier multipliers and therefore
	commute,
	\[
	\gamma(t):=e^{it\Delta}\gamma_0e^{-it\Delta}
	=\langle\nabla\rangle^{-s}|e^{it\Delta}f_1\rangle\langle e^{it\Delta}f_1|\langle\nabla\rangle^{-s},
	\]
	so that its density is
	\[
	\rho_{\gamma(t)}(z)=\big|\langle\nabla\rangle^{-s}(e^{it\Delta}f_1)(z)\big|^2
	=\big|e^{it\Delta}\langle\nabla\rangle^{-s}f_1(z)\big|^2
	=|e^{it\Delta}F(z)|^2, \qquad z\in\M.
	\]
	Hence, by the density convergence hypothesis \eqref{density convergence corolary},
	\begin{equation}\label{square convergence}
		\lim_{t\to0}|e^{it\Delta}F(z)|^2 = |F(z)|^2 \quad \text{a.e. } z\in\M.
	\end{equation}
	The convergence stated above \eqref{square convergence} is true for all functions \( f \in H^s(\M), \) because we can apply similar arguments for each non-zero function, while the case for the zero function is trivial. However, the convergence of the modulus alone does not guarantee the pointwise convergence of \( e^{it\Delta}F \) itself. Therefore, we will use some identities from complex analysis to establish this pointwise convergence.
	
	\begin{itemize}
		\item [--] \textbf{Getting the phase back}
	\end{itemize}
	Fix a nowhere zero Schwartz function $h\in C^\infty(\M)\cap H^s(\M)$ such
	that
	\[
	\lim_{t\to0} e^{it\Delta}h(z) = h(z) \qquad \text{for every } z\in\M.
	\]
	In particular, if $z=(x,y)\in\M=\R^n\times\T^m$, we may take
	\[
	h(z)=e^{-|x|^2},
	\]
	which is smooth, lies in $H^s(\M)$ for every $s$ (Schwartz in $x$, constant
	in $y$), is strictly positive, and whose Schr\"odinger evolution converges
	pointwise everywhere, since $h$ is Schwartz in the $\R^n$ variable.
	
	For a complex number $z_1\in\{1,-1,i,-i\}$, by \eqref{square convergence},
	\[
	\lim_{t\to0}|e^{it\Delta}F(z)+z_1e^{it\Delta}h(z)|^2 = |F(z)+z_1h(z)|^2 \quad \text{a.e. } z\in\M.
	\]
	
	Taking $z_1=1$ and $z_1=-1$ gives, respectively,
	\begin{equation}\label{eq:z1=1}
		\lim_{t\to0}\big|e^{it\Delta}F(z)+e^{it\Delta}h(z)\big|^2 = |F(z)+h(z)|^2 \quad \text{a.e. } z\in\M,
	\end{equation}
	\begin{equation}\label{eq:z1=-1}
		\lim_{t\to0}\big|e^{it\Delta}F(z)-e^{it\Delta}h(z)\big|^2 = |F(z)-h(z)|^2 \quad \text{a.e. } z\in\M.
	\end{equation}
	Subtracting \eqref{eq:z1=-1} from \eqref{eq:z1=1} and using the identity
	$|u+v|^2-|u-v|^2=4\operatorname{Re}(u\overline v)$,
	\begin{equation}\label{eq:real-part}
		\lim_{t\to0}\operatorname{Re}\big(e^{it\Delta}F(z)\,\overline{e^{it\Delta}h(z)}\big)
		= \operatorname{Re}\big(F(z)\overline{h(z)}\big) \quad \text{a.e. } z\in\M.
	\end{equation}
	
	Taking $z_1=i$ and $z_1=-i$ gives, respectively,
	\begin{equation}\label{eq:z1=i}
		\lim_{t\to0}\big|e^{it\Delta}F(z)+ie^{it\Delta}h(z)\big|^2 = |F(z)+ih(z)|^2 \quad \text{a.e. } z\in\M,
	\end{equation}
	\begin{equation}\label{eq:z1=-i}
		\lim_{t\to0}\big|e^{it\Delta}F(z)-ie^{it\Delta}h(z)\big|^2 = |F(z)-ih(z)|^2 \quad \text{a.e. } z\in\M.
	\end{equation}
	Subtracting \eqref{eq:z1=-i} from \eqref{eq:z1=i} and using
	$|u+iv|^2-|u-iv|^2=4\operatorname{Im}(u\overline v)$,
	\begin{equation}\label{eq:imag-part}
		\lim_{t\to0}\operatorname{Im}\big(e^{it\Delta}F(z)\,\overline{e^{it\Delta}h(z)}\big)
		= \operatorname{Im}\big(F(z)\overline{h(z)}\big) \quad \text{a.e. } z\in\M.
	\end{equation}
	Combining \eqref{eq:real-part} and \eqref{eq:imag-part},
	\begin{equation}\label{eq:product-convergence}
		\lim_{t\to0} e^{it\Delta}F(z)\,\overline{e^{it\Delta}h(z)} = F(z)\overline{h(z)} \quad \text{a.e. } z\in\M.
	\end{equation}
	\begin{itemize}
		\item [--] \textbf{Dividing $h$ back out}
	\end{itemize}
	By the choice of $h$, $\lim_{t\to0} e^{it\Delta}h(z)=h(z)$ for every
	$z\in\M$, and $h$ is nowhere zero, so $\overline{e^{it\Delta}h(z)}\neq0$ for
	all sufficiently small $t$, at every point where
	\eqref{eq:product-convergence} holds. Thus, writing
	\[
	e^{it\Delta}F(z) = \frac{e^{it\Delta}F(z)\,\overline{e^{it\Delta}h(z)}}{\overline{e^{it\Delta}h(z)}},
	\]
	and passing to the limit using \eqref{eq:product-convergence} together with
	$\lim_{t\to0}e^{it\Delta}h(z)=h(z)$,
	\[
	\lim_{t\to0} e^{it\Delta}F(z) = \frac{F(z)\overline{h(z)}}{\overline{h(z)}} = F(z) \quad \text{a.e. } z\in\M.
	\]
	
	This contradicts \eqref{existence of non convergence function}, which
	guarantees, for $s<\frac{d}{2(d+1)}$, the existence of $F\in H^s(\M)$
	for which pointwise convergence fails on a set of positive measure.
	Consequently,
	\[
	s\geq\dfrac{d}{2(d+1)}.
	\]    
\end{proofof}

\section*{Acknowledgments}
S.~R.~C. thanks the National Board for Higher Mathematics (NBHM) for a
research fellowship, and IISER Pune for its support. D.~G.~B. is grateful to an ANRF-MATRICS Grant ANRF/ARGM/2025/000580/MTR.

\printbibliography

\end{document}